\newtheorem{theorem}{Theorem}[section]
\newtheorem{Claim}[theorem]{Claim}
\newtheorem{definition}[theorem]{Definition}
\newtheorem{Cor}[theorem]{Corollary}
\newtheorem{fact}[theorem]{Fact}
\newtheorem{lemma}[theorem]{Lemma}
\newtheorem{remark}[theorem]{Remark}
\newtheorem{que}[theorem]{Question}
\newenvironment{proof}
{\noindent \textsc{Proof.}}
{\hspace*{\fill}$\Box$\bigskip}
\newcommand{\dom}[1]{\ensuremath{\mathrm{dom}}(#1)}
\newcommand{\set}[2]{\ensuremath{\{#1 \,|\, #2 \}}}
\newcommand{\seq}[2]{\ensuremath{\langle #1 \,|\, #2 \rangle}}
\newcommand{\restr}[2]{\ensuremath{#1 \! \upharpoonright \! #2}}
\newcommand{\iso}{\cong}
\newcommand{\sub}{\subseteq}
\newcommand{\then}{\rightarrow}
\newcommand{\bb}{\mathbb}
\newcommand{\beq}{\begin{equation}}
\newcommand{\eeq}{\end{equation}}
\newcommand{\brm}{\begin{remark}\begin{rm}}
\newcommand{\erm}{\end{rm}\end{remark}}
\newcommand{\mx}{\mathrm}
\newcommand{\bce}{\begin{compactenum}}
\newcommand{\ece}{\end{compactenum}}
\newcommand{\Add}{\mathrm{Add}}
\newcommand{\R}{\bb{R}}
\newcommand{\Q}{\bb{Q}}
\renewcommand{\P}{\bb{P}}
\newcommand{\TP}{{\sf TP}}
\newcommand{\T}{\bb{T}}
\newcommand{\M}{\bb{M}}
\newcommand{\x}{\times}
\newcommand{\E}{\bb{E}}
\newcommand{\Z}{\bb{Z}}
\renewcommand{\S}{\bb{S}}
\newcommand{\ZFC}{\sf ZFC}
\newcommand{\GCH}{\sf GCH}
\newcommand{\no}{\noindent}
\newcommand{\D}{\bb{D}}
\newcommand{\rest}[0]{\!\restriction\!}
\renewcommand\section{\@startsection {section}{1}{\z@}%
                                   {-3.5ex \@plus -1ex \@minus -.2ex}%
                                   {2.3ex \@plus.2ex}%
                                   {\noindent\center\textsc}}
\renewcommand\subsection{\@startsection{subsection}{2}{\z@}%
                                     {-3.25ex\@plus -1ex \@minus -.2ex}%
                                     {1.5ex \@plus .2ex}%
                                     {\noindent\center\textsc}}
\renewcommand\subsubsection{\@startsection{subsubsection}{3}{\z@}%
                                     {-3.25ex\@plus -1ex \@minus -.2ex}%
                                     {1.5ex \@plus .2ex}%
                                     {\noindent\center \textsc}}
\begin{document}\thispagestyle{empty}

\begin{center}
\no {\large \MakeUppercase{Easton's theorem for the tree property below} $\aleph_\omega$ \par}

\bigskip

\medskip

{\v S}{\'a}rka Stejskalov{\'a}

\medskip

\begin{footnotesize}
Charles University, Department of Logic,\\
Celetn{\' a} 20, Praha 1, 
116 42, Czech Republic\\
sarka.stejskalova@ff.cuni.cz\\
\end{footnotesize}
\no \footnotesize{\today}
\end{center}

\bigskip

\begin{quote}
{\bf Abstract.}  Starting with infinitely many supercompact cardinals, we show that the tree property at every cardinal $\aleph_n$, $1 < n <\omega$, is consistent with an arbitrary continuum function below $\aleph_\omega$ which satisfies $2^{\aleph_n} > \aleph_{n+1}$, $n<\omega$. Thus the tree property has no provable effect on the continuum function below $\aleph_\omega$ except for the restriction that the tree property at $\kappa^{++}$ implies $2^\kappa>\kappa^+$ for every infinite $\kappa$.
\end{quote}

\emph{MSC}: 03E35; 03E55

\emph{Keywords}: Easton's theorem; Tree property; Large cardinals

\tableofcontents

\section{Introduction}

Recall that the continuum function is the function which maps an infinite cardinal $\kappa$ to $2^\kappa$. It is well known that at regular cardinals the continuum function is very easily changed by forcing, as was shown by Easton \cite{EASTONregular}. The case of singular cardinals, or regular limit cardinals whose ``largeness'' we wish to preserve, is more difficult and gave rise to several results which generalize Easton's theorem in this direction (see for instance \cite{MENsup}, \cite{RADEKeaston}, \cite{Cod:Wood}
 or \cite{CodMag:super}).

In this paper we study yet another generalization of Easton's theorem in which we require that some successor cardinals should retain their largeness in terms of a certain compactness property. If $\lambda$ is a  regular uncountable cardinal,  we say that $\lambda$ has \emph{the tree property}, and we denote it by $\TP(\lambda)$, if all $\lambda$-trees have a cofinal branch. It is known that if the tree property holds at $\kappa^{++}$, then $2^\kappa > \kappa^{+}$. In other words the tree property has a non-trivial effect on the continuum function. It seems natural to ask whether the tree property at $\kappa^{++}$ puts more restrictions on the continuum function in addition to $2^\kappa>\kappa^+$ (and the usual restrictions which the continuum function needs to satisfy); or equivalently, which continuum functions are compatible with the tree property. Since it is still open how to get the tree property at a long interval of cardinals (for more information see \cite{NEEMAN:tp}), any Easton's theorem for the tree property is at the moment limited to countable intervals of cardinals.

As should be expected, the difficulty of this question increases if we wish to have (A) the tree property at consecutive cardinals or (B) at cardinals which are the successors or double successors of singular cardinals. We deal with the type (A) in this paper. 

The first partial answer to (A) was given by Unger (\cite{UNGER:1}) who showed that the tree property at $\aleph_2$ is consistent with $2^{\aleph_0}$ arbitrarily large.\footnote{The result can be easily generalized to an arbitrary regular cardinal $\kappa$ with the tree property at $\kappa^{++}$.} We generalized this result in \cite{HS:tp} for all cardinals below $\aleph_\omega$ for the weak tree property (no special Aronszajn trees) and for all even cardinals $\aleph_{2n}$ for the full tree property. The argument used infinitely many weakly compact cardinals which is optimal for the result. In \cite{HS:tp}, we left open the natural question whether having the tree property at every $\aleph_n$ for $2 \le n < \omega$ is consistent with any continuum function which violates GCH below $\aleph_\omega$. Unlike the argument in \cite{HS:tp}, this requires much larger cardinals because it is known that consecutive cardinals with the tree property imply the consistency of at least a Woodin cardinal (see \cite{lower}). In this paper we provide the affirmative answer to this question, i.e.\ we show that if there are infinitely many supercompact cardinals, then it is consistent that the tree property holds at every $\aleph_n$ for $2 \le n < \omega$, and the continuum function below $\aleph_\omega$ is anything not outright inconsistent with the tree property.\footnote{There is nothing specific about the $\aleph_n$'s; the final consecutive sequence $\seq{\kappa_n}{n<\omega}$ of regular cardinals with the tree property can live much higher.}

The argument is based on the construction in the paper by Cummings and Foreman \cite{CUMFOR:tp}, extended to obtain the right continuum function. We outline the argument in Section \ref{sec:outline}.

Although it is not the focus of this paper, let us say a few words about the type (B). We showed in \cite{FHS1:large} that the tree property at the double successor of a singular strong limit cardinal $\kappa$ with countable cofinality does not put any restrictions on the value of $2^\kappa$ apart from the trivial ones.\footnote{An easier proof of this theorem can be found in \cite{HS2:Indestructibility}; the proof is based on an application of the indestructibility of the tree property under certain $\kappa^+$-cc forcing notions. The advantage of the new proof is that it can be directly generalized to singular cardinals with an uncountable cofinality (it does not use any of the properties of the Prikry-type forcing notions except the chain condition).} In \cite{FHS2:large} we followed up with the result that $2^{\aleph_\omega}$ can be equal to $\aleph_{\omega+2+n}$ for any $n< \omega$ with the tree property holding at $\aleph_{\omega+2}$.

\subsection{An outline of the argument}\label{sec:outline}
Let us briefly outline the structure of the argument for a reader roughly familiar with the papers of Abraham \cite{ABR:tree} and Cummings and Foreman \cite{CUMFOR:tp}. Let $\kappa_n$, $1 < n<\omega$, be an increasing sequence of supercompact cardinals with $\kappa_0 = \aleph_0$ and $\kappa_1 = \aleph_1$. For forcing the tree property at $\kappa = \kappa_{n+2}$ for $n \ge 1$ over some model $V_{n-1}$, we are going to use a variant of the Mitchell forcing as it was defined in \cite{CUMFOR:tp}; this forcing contains the Cohen forcing at $\kappa_{n}$. If we define this Cohen forcing in $V_{n-1}$, $V_{n-1}$ must satisfy $\kappa_{n}^{<\kappa_{n}} = \kappa_{n}$ otherwise some cardinals above $\kappa_{n}$ will be unintentionally collapsed. $\kappa_{n}$ is either $\omega_1$ or an inaccessible cardinal in the ground model $V$, but in either case it will be a successor cardinal in $V_{n-1}$, in fact it will be the successor of $\kappa_{n-1}$ (more to the point, it will be the $\aleph_{n}$ of $V_{n-1}$). It follows that for forcing the tree property at $\kappa$ over $V_{n-1}$, the Cohen forcing at $\kappa_{n}$ must come from a model where $2^{\kappa_{n-1}} \le \kappa_n$. Since by the inductive construction for the tree property we will necessarily have $2^{\kappa_{n-1}} > \kappa_{n}$ in $V_{n-1}$, the Cohen forcing cannot come from $V_{n-1}$, but should come from some earlier model.\footnote{We should add that this implies that the Cohen forcing will no longer be $\kappa_{n}$-closed in $V_{n-1}$ so an additional argument must be provided for not collapsing below $\kappa_{n}$.} Cummings and 
Foreman solved this problem by postulating the the Cohen forcing at $\kappa_{n}$ comes from the model $V_{n-2}$, which works provided that $2^{\kappa_{n-1}} = \kappa_{n}$ in $V_{n-2}$. Unless we manipulate the continuum function further, this will leave us with gap $2$ below $\aleph_\omega$: $2^{\aleph_n} = \aleph_{n+2}$ for all $n<\omega$.

In order to realize an arbitrary Easton function below $\aleph_\omega$ (which satisfies $2^{\aleph_n} \ge \aleph_{n+2}$ for all $n<\omega)$ we need to modify the construction of Cummings and Foreman in some way. There seem to be essentially two options: (i) modify the construction in Cummings and Foreman directly and add the required number of subsets of $\kappa_n$ by a Cohen forcing which lives in $V_{n-2}$, or in some earlier model, perhaps even the ground model $V$, or (ii) leave the inductive construction for the tree property as it is in Cummings and Foreman (which gives gap $2$ for the continuum function) and increase the powersets as required in the next step.

The option (i) mays seem cleaner at the first sight, but it causes technical complications\footnote{Roughly speaking, it is hard to argue for the distributivity of the tail of the Mitchell iteration (i.e.\ a tail of $\R_\omega$ in (\ref{R_omega})). In option (ii), the distributivity is ensured by closure in a suitable submodel (essentially an application of Easton's lemma).} because both tasks -- ensuring the tree property and the right continuum function -- are mixed into a single iteration. The option (ii) deals with the two tasks separately, but one needs to make sure that forcing the right continuum function does not ``undo'' the tree property part.

We have opted for the option (ii) and defined a certain forcing $\bb{Z}$ so that \beq \label{R_omega} \bb{Z} = \R_\omega * \dot{\E},\eeq where $\R_\omega$ is exactly the forcing from Cummings and Foreman paper and $\dot{\E}$ is a full-support product of Cohen forcings to obtain the desired continuum function. The Cohen forcings in $\dot{\E}$ are chosen from appropriate inner models of the extension $V[\R_\omega]$ in order to satisfy the restrictions described in previous paragraphs (more precisely, the Cohen forcing at some $\kappa_n$ in $\dot{\E}$ comes from the same inner model as the Cohen at $\kappa_n$ which is the part of the Mitchell forcing in the iteration $\R_\omega$).

The present paper is structured as follows. In Section \ref{sec:prelim} we provide some background information to make the paper self-contained. First we review some basic forcing properties which deal with the interactions of the chain condition and the closure between different models (Section \ref{Sec:FP}), then we discuss forcing conditions for not adding cofinal branches to certain trees (Section \ref{sec:tf}), and finally we review the Mitchell forcing and the argument of Cummings and Foreman from \cite{CUMFOR:tp}.

In Section \ref{sec:main} we prove our theorem. The argument is divided into two sections: In Section \ref{sub:contf} we show that the forcing $\bb{Z}$ collapses only the intended cardinals and moreover forces the right continuum function. In Section \ref{sub:tp} we show that $\bb{Z}$ forces the tree property at every $\aleph_n$, $2 \le n < \omega$, which finishes the argument.

In the final section we discuss open questions and further research.

\section{Preliminaries}\label{sec:prelim}
\subsection{Some basic properties of forcing notions}\label{Sec:FP}

In this section we review some basic properties which we will use later in the paper.

\begin{definition}\label{Def:Forcing_Properties}
Let $P$ be a forcing notion and let $\kappa>\aleph_0$ be a regular cardinal. We say that $P$ is: 
\begin{itemize}
\item \emph{$\kappa$-cc} if every antichain of $P$ has size less than $\kappa$ (we say that $P$ is $ccc$ if it is $\aleph_1$-cc).
\item \emph{$\kappa$-Knaster} if for every $X\sub P$ with $|X|=\kappa$ there is $Y\subseteq X$, such that $|Y|=\kappa$ and all elements of $Y$ are pairwise compatible.
\item \emph{$\kappa$-closed} if every decreasing sequence of conditions in $P$ of size less than $\kappa$ has a lower bound.
\item \emph{$\kappa$-distributive} if $P$ does not add new sequences of ordinals of length less than $\kappa$. 
\end{itemize}
\end{definition}

It is easy to check that all these properties -- except for the $\kappa$-closure -- are invariant under forcing equivalence\footnote{We say that $(P,\le_P)$ and $(Q,\le_Q)$ are forcing equivalent if their Boolean completions are isomorphic.}. Regarding the closure, note that for every non-trivial forcing notion $P$ which is $\kappa$-closed there exists a forcing-equivalent forcing notion which is not even $\aleph_1$-closed (e.g.\ the Boolean completion of $P$).

\begin{lemma}\label{L:iterace_property}
Let $\kappa>\aleph_0$ be a regular cardinal and assume that $P$ is a forcing notion and $\dot{Q}$ is a $P$-name for a forcing notion. Then the following hold: 
\bce[(i)]
\item $P$ is $\kappa$-closed and $P$ forces $\dot{Q}$ is $\kappa$-closed if and only if $P*\dot{Q}$ is $\kappa$-closed. 
\item $P$ is $\kappa$-distributive and $P$ forces $\dot{Q}$ is $\kappa$-distributive if and only if $P*\dot{Q}$ is $\kappa$-distributive.
\item $P$ is $\kappa$-cc and $P$ forces $\dot{Q}$ is $\kappa$-cc if and only if $P*\dot{Q}$ is $\kappa$-cc.
\item If $P$ is $\kappa$-Knaster and $P$ forces $\dot{Q}$ $\kappa$-Knaster then $P*\dot{Q}$ is $\kappa$-Knaster
\ece
\end{lemma}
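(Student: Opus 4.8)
\noindent The four items split into two implications each (item (iv) asserts only one direction), and I would prove them by isolating the routine ``only if'' parts from the substantive ``if'' parts. For the ``only if'' directions in (i)--(iii) I would use that $p\mapsto(p,1_{\dot Q})$ embeds $P$ into $P*\dot Q$ preserving incompatibility and decreasing sequences, and that $V[G]\subseteq V[G][H]$: thus any short decreasing sequence with no bound, any new short sequence of ordinals, or any antichain of size $\kappa$ living in $P$ is inherited by $P*\dot Q$, giving the ``$P$ has the property'' halves; and if $\Vdash_P\dot Q$ fails the property, a named witness in $\dot Q$ below some $p$ becomes a witness in $P*\dot Q$ (e.g. a forced $\dot Q$-antichain $\{\dot q_\alpha\}_{\alpha<\kappa}$ yields the antichain $\{(p,\dot q_\alpha)\}_{\alpha<\kappa}$, since a common extension of two of its members would force the corresponding $\dot q$'s compatible).

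\noindent For the ``if'' direction of (i) I would take a decreasing sequence $\la(p_\alpha,\dot q_\alpha)\mid\alpha<\gamma\ra$ with $\gamma<\kappa$, first extract a lower bound $p$ of the $p_\alpha$ by closure of $P$, observe that $p$ forces $\la\dot q_\alpha\mid\alpha<\gamma\ra$ to be decreasing, and use the forced closure of $\dot Q$ together with the maximal principle to produce a name $\dot q$ with $p\Vdash\dot q\le\dot q_\alpha$ for all $\alpha$; then $(p,\dot q)$ is the required bound. For (ii) I would use the characterisation of $\kappa$-distributivity as ``adding no new $<\kappa$-sequence of ordinals'': if $f\in V[G][H]$ is such a sequence then forced distributivity of $\dot Q$ places $f\in V[G]$ and distributivity of $P$ then places $f\in V$ (the ``only if'' direction is the same transitivity run backwards).

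\noindent The first genuinely nontrivial step is the ``if'' direction of (iii). Assuming $\{(p_\alpha,\dot q_\alpha)\mid\alpha<\kappa\}$ is an antichain (with each $\dot q_\alpha$ forced into $\dot Q$), set $\dot I=\{\alpha<\kappa\mid p_\alpha\in\dot G\}$. For a $P$-generic $G$ and distinct $\alpha,\beta\in\dot I^G$ the conditions $\dot q_\alpha^G,\dot q_\beta^G$ must be incompatible in $\dot Q^G$, since otherwise a common extension, amalgamated with a common extension of $p_\alpha,p_\beta$ inside $G$, would make the two pairs compatible. Hence $\{\dot q_\alpha^G\mid\alpha\in\dot I^G\}$ is an antichain in the $\kappa$-cc poset $\dot Q^G$, so $|\dot I^G|<\kappa$, i.e. $\Vdash_P|\dot I|<\kappa$. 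As $\kappa$ is regular, $\Vdash_P\sup\dot I<\kappa$; and because $P$ is $\kappa$-cc a name for an ordinal below $\kappa$ takes fewer than $\kappa$ values along a maximal antichain, which are bounded by some $\eta<\kappa$, so $\Vdash_P\sup\dot I\le\eta$. This contradicts $p_{\eta+1}\Vdash\eta+1\in\dot I$.

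\noindent For (iv) the obstacle is that compatibility of $(p_\alpha,\dot q_\alpha)$ with $(p_\beta,\dot q_\beta)$ couples a ground-model requirement ($p_\alpha\comp p_\beta$) with a generic one (their meet forces $\dot q_\alpha\comp\dot q_\beta$), and one cannot place all the relevant $p_\alpha$ into a single generic. I would work in $\RO(P)$: writing $r_{\alpha\beta}=\|\dot q_\alpha\comp\dot q_\beta\|$ and identifying $p_\alpha$ with its image, two pairs are compatible exactly when $p_\alpha\wedge p_\beta\wedge r_{\alpha\beta}\neq 0$. Put $\dot A=\{\alpha\mid p_\alpha\in\dot G\}$; since each $p_\alpha\neq 0$ forces $\alpha\in\dot A$, the regularity/$\kappa$-cc argument of (iii) shows that $\Vdash_P|\dot A|<\kappa$ is impossible, so some $p^{*}$ forces $|\dot A|=\kappa$. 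Using the forced $\kappa$-Knaster property of $\dot Q$ below $p^{*}$ I would build a name $\dot Z$ with $\Vdash\dot Z\sub\dot A$, $\Vdash\la\dot q_\alpha\mid\alpha\in\dot Z\ra$ pairwise compatible, and $p^{*}\Vdash|\dot Z|=\kappa$. Setting $e_\alpha=\|\alpha\in\dot Z\|$ one gets $e_\alpha\le p_\alpha$, $e_\alpha\wedge e_\beta\le r_{\alpha\beta}$, and (by the behaviour below $p^{*}$) the set $X'=\{\alpha\mid e_\alpha\neq 0\}$ of size $\kappa$. Finally I would apply the $\kappa$-Knaster property of $\RO(P)$ to $\{e_\alpha\mid\alpha\in X'\}$ to obtain $Y\in[\kappa]^{\kappa}$ with $e_\alpha\wedge e_\beta\neq 0$ for $\alpha,\beta\in Y$; then $p_\alpha\wedge p_\beta\wedge r_{\alpha\beta}\ge e_\alpha\wedge e_\beta\neq 0$, so the pairs are pairwise compatible along $Y$. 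The crux is exactly the insistence that $\dot Z\sub\dot A$ (which forces $e_\alpha\le p_\alpha$) while still reaching $\kappa$ many indices, since this is what lets the two ``layers'' of compatibility combine.
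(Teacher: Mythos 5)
Your proposal is correct: the paper gives no argument for this lemma, dismissing it as routine with a citation to Jech and Kunen, and your proofs are precisely the standard ones found there — the name $\dot I=\{\alpha \,:\, p_\alpha\in\dot G\}$ with the $\sup$-bounding argument for (iii), and the Boolean-valued refinement via a name $\dot Z$ for a compatible subfamily, together with the $\kappa$-Knaster property of the completion (legitimate, since the paper notes Knasterness is invariant under forcing equivalence), for (iv). No gaps of substance; the only steps glossed (that $\kappa$-cc forcing preserves the regularity of $\kappa$, and the harmless handling of repetitions among the $\dot q_\alpha$ or $e_\alpha$ when applying the Knaster property to a set rather than a sequence) are standard trivialities.
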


\begin{proof}
The proofs are routine; for more details see \cite{JECHbook} or \cite{KUNbook}.
\end{proof}

If $Q$ is in the ground model, $P * \check{Q}$ is equivalent to $P \x Q$.  We state some properties which the product forcing has with respect to the chain condition.

{~}
\begin{lemma}\label{lm:product}
Let $\kappa>\aleph_0$ be a regular cardinal and assume that $P$ and $Q$ are forcing notions. Then the following hold: 
\bce[(i)]
\item If $P$ and $Q$ are $\kappa$-Knaster, then $P \x Q$ is $\kappa$-Knaster.
\item If $P$ is $\kappa$-Knaster and $Q$ is $\kappa$-cc, then $P\x Q$ is $\kappa$-cc.
\ece
\end{lemma}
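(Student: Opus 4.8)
The plan is to handle both parts by the same thinning strategy, exploiting that compatibility in a product poset is componentwise: $(p,q)$ and $(p',q')$ are compatible in $P \x Q$ if and only if $p \comp p'$ in $P$ and $q \comp q'$ in $Q$. The one technical point I want to isolate first is that the Knaster property, though stated for subsets of size $\kappa$, can be applied to an arbitrary indexed family of length $\kappa$ (with possible repetitions). Concretely, I would first record the following reformulation: if $R$ is $\kappa$-Knaster and $\seq{r_\alpha}{\alpha<\kappa}$ is any sequence of conditions in $R$, then there is $I \in [\kappa]^\kappa$ such that $\set{r_\alpha}{\alpha \in I}$ are pairwise compatible. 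To see this, let $A = \set{r_\alpha}{\alpha<\kappa}$ be the set of values taken. If $|A| = \kappa$, apply the Knaster property to $A$ to get a pairwise-compatible $A' \sub A$ of size $\kappa$ and set $I = \set{\alpha<\kappa}{r_\alpha \in A'}$; if instead $|A|<\kappa$, then since $\kappa$ is regular some single value is attained $\kappa$ times and I take $I$ to be the corresponding index set. In either case $I$ works (a condition is compatible with itself), which is exactly what lets me treat the two coordinates of the product uniformly.

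For part (i), given $X \sub P \x Q$ with $|X| = \kappa$, I would enumerate it as $\seq{(p_\alpha, q_\alpha)}{\alpha<\kappa}$ and apply the reformulation twice. First, using that $P$ is $\kappa$-Knaster, I extract $I \in [\kappa]^\kappa$ along which the first coordinates $\set{p_\alpha}{\alpha \in I}$ are pairwise compatible. Then I apply the reformulation again to the length-$\kappa$ family $\seq{q_\alpha}{\alpha \in I}$ in the $\kappa$-Knaster poset $Q$ to obtain $J \sub I$ with $|J| = \kappa$ along which the second coordinates are pairwise compatible. Since $J \sub I$, the first coordinates remain pairwise compatible on $J$, so $\set{(p_\alpha, q_\alpha)}{\alpha \in J}$ is a pairwise-compatible subfamily of size $\kappa$, witnessing that $P \x Q$ is $\kappa$-Knaster.

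For part (ii), I would argue by contradiction: suppose $\set{(p_\alpha, q_\alpha)}{\alpha<\kappa}$ is an antichain in $P \x Q$ of size $\kappa$. Applying the reformulation to the first coordinates (here I only need $P$ to be $\kappa$-Knaster) yields $I \in [\kappa]^\kappa$ with $\set{p_\alpha}{\alpha \in I}$ pairwise compatible. Now for distinct $\alpha, \beta \in I$ the conditions $(p_\alpha,q_\alpha)$ and $(p_\beta,q_\beta)$ are incompatible while $p_\alpha \comp p_\beta$; by the componentwise characterization of compatibility this forces $q_\alpha$ and $q_\beta$ to be incompatible. Hence $\set{q_\alpha}{\alpha \in I}$ is an antichain of size $\kappa$ in $Q$, contradicting that $Q$ is $\kappa$-cc.

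I expect the only genuine subtlety to be the reformulation in the first paragraph --- in particular the case split on $|A|$ and the appeal to the regularity of $\kappa$ --- since a naive application of the Knaster property to the (possibly small) set of first coordinates would not directly return a $\kappa$-sized index set. Everything after that is bookkeeping, and the asymmetry between the two hypotheses in (ii) (Knaster on $P$, merely cc on $Q$) is exactly what the argument exploits: the Knaster side produces a $\kappa$-sized compatible family of first coordinates, along which the antichain assumption pushes all the incompatibility into the second coordinates.
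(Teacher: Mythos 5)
Your proof is correct, and it is exactly the standard argument the paper has in mind when it dismisses this lemma as ``routine'' (the paper gives no proof at all): componentwise compatibility in the product, a Knaster extraction on the first coordinates, then either a second Knaster extraction for (i) or pushing the incompatibility into the second coordinates for (ii). Your care with the indexed-family reformulation --- handling repeated coordinates via the regularity of $\kappa$ --- is a genuine subtlety that the word ``routine'' sweeps under the rug, and you resolve it correctly.
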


\begin{proof}
The proofs are routine.
\end{proof}

The following lemma summarises some of the more important forcing properties of a product $P \x Q$ regarding the chain condition.

{~}
\begin{lemma}\label{L:K-cc}
Let $\kappa>\aleph_0$ be a regular cardinal and assume that $P$ and $Q$ are forcing notions such that $P$ is $\kappa$-Knaster and $Q$ is $\kappa$-cc. Then the following hold:
\bce[(i)]
\item $P$ forces that $Q$ is $\kappa$-cc.
\item $Q$ forces that $P$ is $\kappa$-Knaster.
\ece
\end{lemma}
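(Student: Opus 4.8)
The two parts are of quite different character: (i) is essentially immediate from the earlier lemmas, while (ii) carries the real content. For (i), the plan is to combine Lemma \ref{lm:product} with Lemma \ref{L:iterace_property}. Since $P$ is $\kappa$-Knaster it is in particular $\kappa$-cc, so by Lemma \ref{lm:product}(ii) the product $P\x Q$ is $\kappa$-cc. Now read $P\x Q$ as the two-step iteration $P*\check Q$ (these are forcing equivalent, as noted before Lemma \ref{lm:product}, and $\kappa$-cc is invariant under forcing equivalence) and apply the nontrivial direction of Lemma \ref{L:iterace_property}(iii): from ``$P*\check Q$ is $\kappa$-cc'' together with ``$P$ is $\kappa$-cc'' one reads off that $P$ forces $\check Q$ to be $\kappa$-cc, which is exactly (i). If one prefers a self-contained argument, suppose $p_0\Vdash\dot A=\set{\dot a_\alpha}{\alpha<\kappa}$ is an antichain of $\check Q$, choose $p_\alpha\le p_0$ and $a_\alpha\in Q$ with $p_\alpha\Vdash\dot a_\alpha=\check a_\alpha$, and after a pigeonhole to arrange a size-$\kappa$ index set on which the $p_\alpha$ are pairwise compatible (using $\kappa$-Knaster of $P$, or noting that a single repeated value already does the job), a common lower bound of $p_\alpha,p_\beta$ forces $a_\alpha\perp a_\beta$ and $a_\alpha\ne a_\beta$; the $a_\alpha$ then form a genuine antichain of $Q$ of size $\kappa$ in $V$, a contradiction.

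For (ii) I would argue directly with names. Fix $q_0\Vdash\dot X\sub\check P$ with $|\dot X|=\kappa$. By recursion on $\alpha<\kappa$, using $q_0\Vdash|\dot X|=\kappa$, pick pairwise distinct $p_\alpha\in P$ and conditions $q_\alpha\le q_0$ with $q_\alpha\Vdash\check p_\alpha\in\dot X$. The point of the recursion is that $\set{p_\alpha}{\alpha<\kappa}$ is a genuine $V$-set, so $\kappa$-Knaster of $P$ applies to it in $V$, yielding $B\in[\kappa]^\kappa$ with $\set{p_\alpha}{\alpha\in B}$ pairwise compatible. The natural candidate for the required pairwise-compatible subset of $\dot X$ is the name $\dot Y=\set{(\check p_\alpha,q_\alpha)}{\alpha\in B}$, whose realization $\set{p_\alpha}{\alpha\in B,\ q_\alpha\in G}$ is automatically a subset of $\dot X[G]$ (each $q_\alpha\in G$ forces $\check p_\alpha\in\dot X$) and automatically pairwise compatible (compatibility in $P$ is absolute). \textbf{The main obstacle is the size:} a single generic $G$ may contain only few of the $q_\alpha$, so a priori nothing guarantees that $\dot Y[G]$ has size $\kappa$. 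This is exactly the subtle point, since $\kappa$-cc alone does not force a size-$\kappa$ family of conditions to meet the generic in a large set.

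I would overcome this with a density observation, and this is where $\kappa$-cc of $Q$ enters. The claim is that there is $q^*\le q_0$ such that for every $q\le q^*$ the set $\set{\alpha\in B}{q\comp q_\alpha}$ has size $\kappa$. Indeed, were the conditions $q$ with $|\set{\alpha\in B}{q\comp q_\alpha}|<\kappa$ dense below $q_0$, one could take a maximal antichain $W$ of such conditions below $q_0$ (predense below $q_0$ by maximality); by $\kappa$-cc $|W|<\kappa$, so by regularity of $\kappa$ the union $\bigcup_{w\in W}\set{\alpha\in B}{w\comp q_\alpha}$ has size $<\kappa$, and any $\alpha^*\in B$ outside it gives $q_{\alpha^*}\le q_0$ incompatible with all of $W$, contradicting predensity. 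Working below such a $q^*$, a routine density argument then shows $q^*\Vdash\set{\alpha\in B}{q_\alpha\in\dot G}$ is unbounded in $\kappa$ (given $q\le q^*$ and $\gamma<\kappa$, pick $\alpha\in B$ with $\alpha>\gamma$ and $q\comp q_\alpha$, and pass to a common lower bound); since $\kappa$ remains regular in the $\kappa$-cc extension, this set has size $\kappa$, whence $q^*\Vdash|\dot Y|=\kappa$ (the distinctness of the $p_\alpha$ makes $|\dot Y[G]|$ equal to the size of that index set). Thus $q^*$ forces $\dot Y$ to be a pairwise-compatible subset of $\dot X$ of size $\kappa$; as $q_0$ was an arbitrary condition forcing the hypothesis on $\dot X$, running the argument below every such condition shows $Q$ forces $\check P$ to be $\kappa$-Knaster. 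I expect the density claim to be the only delicate step, with everything else routine bookkeeping.
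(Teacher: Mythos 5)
Your proposal is correct. For (i) you do exactly what the paper does: it proves this part precisely by combining Lemma \ref{lm:product}(ii) with the right-to-left direction of Lemma \ref{L:iterace_property}(iii) via the equivalence of $P \x Q$ with $P * \check{Q}$, so your first paragraph matches the paper's proof verbatim in structure (your self-contained fallback argument is also sound, modulo the standard arrangement that $q_0$ forces the $\dot a_\alpha$ to be pairwise distinct). For (ii), the paper gives no proof at all: it cites Cummings' \emph{Aronszajn and Kurepa trees}, where the argument is attributed to Magidor. Your argument is a complete and correct reconstruction of essentially that argument: decide values $p_\alpha$ by conditions $q_\alpha \le q_0$ (the recursion ensuring distinctness is legitimate, since a $\kappa$-cc forcing keeps $\kappa$ a cardinal and cannot inflate a ground-model set of size $<\kappa$ to size $\kappa$), apply Knasterness in $V$ to get the compatible family indexed by $B$, and then use $\kappa$-cc to find $q^*$ below which every condition is compatible with $\kappa$-many $q_\alpha$. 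Your density claim is indeed the crux, and your proof of it is right: if conditions $q$ with $|\{\alpha \in B : q \comp q_\alpha\}| < \kappa$ were dense below $q_0$, a maximal antichain $W$ of such conditions has size $<\kappa$ by $\kappa$-cc, the union of the associated index sets has size $<\kappa$ by regularity of $\kappa$, and any $q_{\alpha^*}$ with $\alpha^* \in B$ outside that union contradicts predensity of $W$ below $q_0$. The remaining steps (unboundedness of $\{\alpha \in B : q_\alpha \in \dot{G}\}$ below $q^*$, preservation of the regularity of $\kappa$, upward absoluteness of compatibility, and the density-below-$q_0$ bookkeeping at the end) are all handled correctly. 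In short: where the paper outsources the only nontrivial part to a citation, you supply the Magidor-style proof in full, and it is sound.
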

\begin{proof}
(i).\ This is an easy consequence of Lemmas \ref{L:iterace_property}(iii) and \ref{lm:product}(ii).

(ii).\ A proof (attributed to Magidor) can be found in \cite{CumAron-Kurep}. 
\end{proof}

The following lemma summarises some of the more important properties of the product $P \x Q$ regarding the distributivity and closure. 

\begin{lemma}\label{L:closed-dis}
Let $\kappa>\aleph_0$ be a regular cardinal and assume that $P$ and $Q$ are forcing notions, where $P$ is $\kappa$-closed and $Q$ is $\kappa$-distributive.  Then the following hold:
\bce[(i)]
\item $P$ forces that $Q$ is $\kappa$-distributive.
\item $Q$ forces that $P$ is $\kappa$-closed.
\ece
\end{lemma}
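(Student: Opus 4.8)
The plan is to prove (ii) first by a direct argument and then deduce (i) from (ii) by commuting the two-step iteration, exploiting the fact that $\kappa$-distributivity of the combined extension does not depend on the order of forcing.

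For (ii), let $G$ be $Q$-generic over $V$ and work in $V[G]$. Take an arbitrary decreasing sequence $\seq{p_i}{i<\alpha}$ of conditions in $P$ with $\alpha<\kappa$. Fix in $V$ an injection $e\colon P\embed\Ord$ (possible since $P$ is a set); then $\seq{e(p_i)}{i<\alpha}$ is a sequence of ordinals of length $<\kappa$ lying in $V[G]$. Since $Q$ is $\kappa$-distributive, this sequence already belongs to $V$, and applying $e^{-1}$ in $V$ recovers the whole sequence $\seq{p_i}{i<\alpha}$ in $V$. As $P$ is $\kappa$-closed in $V$, the sequence has a lower bound $q\in P$ in $V$; because the order $\le_P$ is unchanged by forcing, $q$ remains a lower bound in $V[G]$. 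Hence $P$ is $\kappa$-closed in $V[G]$, which is what (ii) asserts.

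For (i), I would route through the product $P\x Q$. Recall, as noted just before the lemma, that $P\x Q\iso Q\x P$ and that $Q\x P=Q*\check P$. Now $Q$ is $\kappa$-distributive by hypothesis, and by part (ii) $Q$ forces $P$ to be $\kappa$-closed, hence $\kappa$-distributive (closure implies distributivity in the extension). Applying Lemma \ref{L:iterace_property}(ii) to $Q*\check P$ therefore shows that $Q\x P$, and so $P\x Q=P*\check Q$, is $\kappa$-distributive. A second application of Lemma \ref{L:iterace_property}(ii), this time in the direction ``$P*\dot Q$ $\kappa$-distributive $\Rightarrow$ $P$ forces $\dot Q$ $\kappa$-distributive'' applied to $P*\check Q$, yields that $P$ forces $\check Q=Q$ is $\kappa$-distributive, which is exactly (i).

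The only genuinely delicate point is part (i): one cannot argue $\kappa$-distributivity of $Q$ over $V[G_P]$ comfortably by a direct density argument, since $P$ need not be $\kappa$-cc and the usual Easton-style fusion is unavailable. The idea that keeps the proof short is that $\kappa$-distributivity is really a property of the full extension $V[G_P\x G_Q]$ — namely that it adds no short sequence of ordinals over $V$ — and this is symmetric in $P$ and $Q$, unlike $\kappa$-closure. Reversing the order of the iteration lets the $\kappa$-closure of $P$ do all the work through part (ii), and then Lemma \ref{L:iterace_property}(ii) transfers the conclusion back to the original order.
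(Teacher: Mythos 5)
Your proposal is correct, and since the paper dismisses this lemma as routine, your argument is exactly the standard one it has in mind: for (ii), coding the decreasing $<\kappa$-sequence of conditions by ordinals via an injection in $V$ so that $\kappa$-distributivity of $Q$ pulls it back into $V$, where $\kappa$-closure of $P$ supplies a lower bound; and for (i), commuting $P\times Q\iso Q\times P$ and applying both directions of Lemma \ref{L:iterace_property}(ii). In particular, your appeal to the ``only if'' direction of Lemma \ref{L:iterace_property}(ii) is sound, because any new sequence of ordinals of length $<\kappa$ appearing in the intermediate extension would persist to the full extension, so $\kappa$-distributivity of the two-step iteration does transfer back to each factor as you claim.
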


\begin{proof}
The proof is routine.
\end{proof}

We can also formulate some results for the product of two forcing notions with respect to preservation of the chain condition and distributivity at the same time. The following lemma appeared in \cite{EASTONregular}.

\begin{lemma}\label{L:Easton}\emph{(Easton)}
Let $\kappa>\aleph_0$ be a regular cardinal and assume that $P$ and $Q$ are forcing notions, where $P$ is $\kappa$-cc and $Q$ is $\kappa$-closed. Then the following hold:
\bce[(i)]
\item $P$ forces that $Q$ is $\kappa$-distributive.
\item $Q$ forces that $P$ is $\kappa$-cc.
\ece
\end{lemma}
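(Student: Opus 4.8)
The two parts are proved by the same bookkeeping device, so I would in both cases build a decreasing sequence of conditions in $Q$, using $\kappa$-closure to pass through limit stages (legitimate because every sequence that arises has length $\le\lambda<\kappa$ or length $\kappa$ broken at limits below $\kappa$), and read off information about $P$ using its chain condition. I would prove (ii) first, as it is the cleaner of the two. Suppose toward a contradiction that some $q_0 \in Q$ forces that $\{\dot a_\alpha \mid \alpha<\kappa\}$ is an antichain in $\check P$. Working in $V$, I would build a decreasing sequence $\langle q_\alpha \mid \alpha<\kappa\rangle$ below $q_0$, taking lower bounds at limit stages by $\kappa$-closure, with $q_{\alpha+1}$ chosen to decide $\dot a_\alpha = \check b_\alpha$ for some $b_\alpha \in P$. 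The key point is that incompatibility of two ground-model conditions of $P$ is absolute: a common lower bound, if it existed in any extension, would already be an element of $P \in V$. Hence from the fact that $q_{\alpha+1}$ forces $\check b_\alpha$ and $\check b_\beta$ incompatible for $\alpha\ne\beta$, I would conclude that $\{b_\alpha \mid \alpha<\kappa\}$ is a genuine antichain of size $\kappa$ in $V$, contradicting that $P$ is $\kappa$-cc.

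For (i) I would argue directly that $P\times Q$ forces every $f\colon\lambda\to\Ord$ with $\lambda<\kappa$ in the full extension to lie in the $P$-extension; since the product commutes and (writing $G$ for the $P$-generic and $H$ for the $Q$-generic) $H$ is $Q$-generic over $V[G]$, this is exactly the $\kappa$-distributivity of $Q$ in $V[G]$. Fix a name $\dot f$ and a condition $(p_0,q_0)$ forcing $\dot f\colon\check\lambda\to\Ord$. I would construct a decreasing sequence $\langle q_\xi \mid \xi\le\lambda\rangle$ in $Q$ below $q_0$, again using $\kappa$-closure at limits, together with $P$-names $\dot h_\xi$ such that $(p_0,q_{\xi+1})\Vdash \dot f(\check\xi)=\dot h_\xi$. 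Letting $\dot h$ be the $P$-name for $\langle \dot h_\xi \mid \xi<\lambda\rangle$, the condition $(p_0,q_\lambda)$ then forces $\dot f=\dot h$, and since $\dot h$ is a $P$-name this says $\dot f$ lies in the $P$-extension below $(p_0,q_\lambda)$. As $(p_0,q_0)$ was arbitrary, the set of conditions forcing $\dot f\in V[\dot G_P]$ is dense, which gives the claim.

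The hard part — and the only place where the chain condition of $P$ genuinely enters — is the successor step: given $q_\xi$, producing a single $q_{\xi+1}\le q_\xi$ together with a maximal antichain $A_\xi$ of $P$ below $p_0$ such that each $p\in A_\xi$ decides $\dot f(\check\xi)$ with $q_{\xi+1}$, the name $\dot h_\xi$ being then obtained by mixing the decided values along $A_\xi$. One cannot in general keep a condition of $P$ fixed while forcing a value of $\dot f(\check\xi)$, so I would run an inner recursion that simultaneously grows an antichain of $P$ and shrinks a condition of $Q$: at each step, if the current antichain is not yet predense below $p_0$, pick a $p'$ incompatible with all of it, extend the current pair $(p',q')$ to decide $\dot f(\check\xi)$, and add the resulting $P$-coordinate to the antichain while passing to the shrunk $Q$-coordinate. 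Limit stages of this inner recursion have length $<\kappa$ and are handled by $\kappa$-closure of $Q$; since $P$ is $\kappa$-cc the antichain can never reach size $\kappa$, so by regularity of $\kappa$ the recursion must halt at some stage $<\kappa$ with a predense antichain, and the final $Q$-condition is the desired $q_{\xi+1}$. This interplay — $\kappa$-cc forcing the inner recursion to terminate in fewer than $\kappa$ steps, and $\kappa$-closure supplying the lower bounds that glue the fewer-than-$\kappa$ many decisions into one condition of $Q$ — is the crux of the lemma, and is precisely the dual of the absoluteness-of-incompatibility argument used for (ii).
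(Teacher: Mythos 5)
Your proof is correct, and it is exactly the standard argument the paper itself omits: for (i) the paper simply cites Jech (Lemma 15.19), whose proof is the same interleaving of a $\kappa$-cc-bounded inner recursion building maximal antichains of $P$ with $\kappa$-closed descent in $Q$, and for (ii) the paper declares it easy, the absoluteness-of-incompatibility argument you give being the intended one. No gaps; both the mixing step along the antichains $A_\xi$ and the density argument at the end are handled correctly.
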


\begin{proof}
For the proof of (i), see \cite[Lemma 15.19]{JECHbook}, (ii) is easy.
\end{proof}

\subsection{Trees and forcing}\label{sec:tf}

An essential step in standard arguments that a certain partial order forces the tree property is to argue that its quotient does not add cofinal branches to certain trees. Fact \ref{F:Knaster} is due to Baumgartner (see \cite{BAUM:iter}) and Fact \ref{F:Closed} is due to Silver (see \cite{ABR:tree} for more details; a proof with $\lambda = \aleph_0$ is in \cite[Chapter VIII, Section 3]{KUNbook}) .
 
{~}
\begin{fact}\label{F:Knaster}
Let $\kappa$ be a regular cardinal and assume that $P$ is a $\kappa$-Knaster forcing notion. If $T$ is a tree of height $\kappa$, then forcing with $P$ does not add cofinal branches to $T$.
\end{fact}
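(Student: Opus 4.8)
The plan is to argue by contradiction: suppose that forcing with $P$ adds a cofinal branch $b$ to $T$, and let $\dot{b}$ be a $P$-name forced (by some condition $p_0$) to be such a branch. The key observation is that since $b \notin V$, for every node $t$ on some level of $T$ we can find conditions below $p_0$ that decide $\dot{b}$ to pass through incompatible nodes; more precisely, for each level $\alpha < \kappa$ there is a pair of conditions and a level above $\alpha$ at which these conditions force $\dot{b}$ to take two distinct (hence $T$-incomparable) values. The standard technique is to build, by recursion on $s \in {}^{<\omega}2$ (or along a tree of binary splitting), a system of conditions $\set{p_s}{s \in {}^{<\omega}2}$ together with nodes $\set{t_s}{s \in {}^{<\omega}2}$ of $T$ such that $p_s \Vdash t_s \in \dot{b}$, the node $t_s$ lies on level $\ell_s$ of $T$, and whenever $s$ and $s'$ are incomparable in ${}^{<\omega}2$ the nodes $t_s$ and $t_{s'}$ are $T$-incomparable. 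The splitting is possible exactly because $\dot{b}$ is forced to be a new branch: below any condition deciding a long enough initial segment of $\dot{b}$, two further extensions must split, since otherwise $\dot{b}$ would already be determined in $V$ on a tail and hence lie in $V$.

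Next I would extract from this tree of conditions an antichain of size $\kappa$ in $P$ to contradict the $\kappa$-Knaster property. Concretely, for each branch through the binary tree I pick a single condition at a carefully chosen level, ensuring that conditions coming from $T$-incomparable nodes are pairwise incompatible in $P$: if $p_s$ and $p_{s'}$ had a common extension $q$, then $q$ would force both $t_s \in \dot{b}$ and $t_{s'} \in \dot{b}$, but $t_s$ and $t_{s'}$ are $T$-incomparable, contradicting that $\dot{b}$ is forced to be a branch (a branch being linearly ordered). This yields a set $X \sub P$ of size $\kappa$ whose elements are pairwise incompatible. Since $P$ is $\kappa$-Knaster, there should be $Y \sub X$ of size $\kappa$ with all pairs compatible — but every pair in $X$ is incompatible, so $|Y| \le 1$, a contradiction. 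Hence no such branch $b$ exists.

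The main subtlety, which I expect to be the real obstacle, is producing $\kappa$-many pairwise incompatible conditions rather than merely a tree of splitting conditions: the naive binary splitting gives only countably or boundedly many levels, so one must iterate the splitting $\kappa$ times and index by the nodes of $T$ itself rather than by ${}^{<\omega}2$, choosing for each $\alpha<\kappa$ a level and a condition $p_\alpha$ deciding $\dot{b}$ to pass through a node $u_\alpha$ in such a way that the $u_\alpha$ are pairwise $T$-incomparable. Keeping the $u_\alpha$ pairwise incomparable across all $\alpha<\kappa$ (so that the associated $p_\alpha$ form a genuine antichain of full size $\kappa$) is where the argument must be set up correctly; the cleanest route is to fix in advance an antichain of $T$ of size $\kappa$ and to find, for each node on it, a condition forcing $\dot{b}$ through that node, using that $T$ has height $\kappa$ and $b$ is cofinal together with the regularity of $\kappa$. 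Once the antichain of size $\kappa$ in $P$ is in hand, the contradiction with $\kappa$-Knaster (indeed already with $\kappa$-cc) is immediate.
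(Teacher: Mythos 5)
Your argument has a fatal directional flaw, and your closing parenthetical --- that the contradiction is obtained ``already with $\kappa$-cc'' --- is the tell-tale sign. If your construction worked, it would prove that no $\kappa$-cc forcing adds a cofinal branch to a tree of height $\kappa$; that statement is false. A Suslin tree $S$, forced with as a partial order (reversed), is ccc, i.e.\ $\aleph_1$-cc, and its generic filter is a \emph{new} cofinal branch through the tree $S$ itself, which has height $\omega_1$. In that example the antichain you want cannot exist: an uncountable family of pairwise incompatible conditions would be an uncountable antichain of the Suslin tree, and likewise the pairwise $T$-incomparable nodes $u_\alpha$ through which $\dot b$ is forced would form an uncountable antichain of $T$. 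This pinpoints the unjustified step. Your local splitting observation is correct (a new branch splits below any condition, and two conditions forcing $\dot b$ through $T$-incomparable nodes are indeed incompatible), but it only produces a system indexed by ${}^{<\omega}2$; iterating the splitting $\kappa$ times, as you propose, requires passing through limit stages, where one needs lower bounds for infinite descending sequences --- exactly the closure that a Knaster (or cc) forcing lacks. Your suggested repair, to fix in advance an antichain of $T$ of size $\kappa$ and find for each of its nodes a condition forcing $\dot b$ through it, fails twice over: a tree of height $\kappa$ need not have any antichain of size $\kappa$ (a Suslin tree does not), and even when it does, the set $\set{t\in T}{\exists q\le p,\; q\Vdash t\in\dot b}$ is in general a thin subtree that a prescribed antichain has no reason to meet (e.g.\ some condition may already decide $\dot b$ through a fixed node, killing all nodes incomparable with it).

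The paper itself gives no proof of Fact \ref{F:Knaster}, citing Baumgartner \cite{BAUM:iter}, and the standard argument uses the Knaster property in precisely the opposite way: one exploits guaranteed \emph{compatibility}, not incompatibility. Given $p$ forcing $\dot b$ to be a cofinal branch, choose for each $\alpha<\kappa$ a condition $p_\alpha\le p$ and a node $t_\alpha$ on level $\alpha$ with $p_\alpha\Vdash t_\alpha\in\dot b$. Knasterness yields $I\sub\kappa$ of size $\kappa$ with $\set{p_\alpha}{\alpha\in I}$ pairwise compatible; for $\alpha<\beta$ in $I$ a common extension forces both $t_\alpha$ and $t_\beta$ into the branch, so $t_\alpha<_T t_\beta$, and hence $\set{t_\alpha}{\alpha\in I}$ is a chain whose downward closure is a cofinal branch $b^*$ of $T$ lying in $V$. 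This weaker conclusion (a branch in the extension yields a branch in the ground model) already suffices for every use of the fact in this paper, where $T$ is Aronszajn in the relevant model. For the literal statement one runs a second round against $b^*$: if $\dot b$ is forced to be new, then below each $p_\alpha$ ($\alpha\in I$) pick $q_\alpha\le p_\alpha$ forcing some node $u_\alpha\notin b^*$ into $\dot b$; a second application of Knasterness gives a size-$\kappa$ pairwise compatible subfamily, and choosing $\alpha<\beta$ there with the level of $u_\alpha$ below $\beta$, a common extension of $q_\alpha$ and $q_\beta$ forces $u_\alpha<_T t_\beta\in b^*$, whence $u_\alpha\in b^*$ by downward closure --- a contradiction. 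Note that even the related Fact \ref{F:Square_cc}, where one \emph{does} build pairs of conditions forcing incomparable nodes, derives its contradiction from the $\kappa$-cc of $P\x P$ applied to the pairs, i.e.\ again from two conditions being compatible, never from exhibiting a large antichain in $P$.
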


\begin{fact}\label{F:Closed}
Let $\kappa$, $\lambda$ be regular cardinals and $2^{\kappa}\geq\lambda$. Assume that $P$ is a $\kappa^{+}$-closed forcing notion. If $T$ is a $\lambda$-tree, then forcing with $P$ does not add cofinal branches to $T$.
\end{fact}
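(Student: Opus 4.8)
The plan is to run the classical Silver ``tree of conditions'' argument: assuming $P$ added a new cofinal branch, I would split the name of that branch $2^{\kappa}$ many ways and use the $\kappa^{+}$-closure of $P$ to collapse all the splitting onto a single level $\delta<\lambda$ of $T$, which would then be forced to contain $2^{\kappa}\ge\lambda$ distinct nodes, contradicting the fact that a $\lambda$-tree has levels of size $<\lambda$. First I would dispose of the degenerate case $\lambda\le\kappa$: a cofinal branch through $T$ is a sequence of ordinals of length $\lambda<\kappa^{+}$, and a $\kappa^{+}$-closed forcing is $\kappa^{+}$-distributive, so it adds no such sequence. Hence I may assume $\lambda\ge\kappa^{+}$, in particular $\kappa<\lambda$. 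Suppose toward a contradiction that some $p\in P$ forces a name $\dot b$ to be a cofinal branch of $T$ that is not in $V$; strengthening $p$, I may assume $p\Vdash\dot b\notin V$.

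The engine of the proof is a splitting claim: for every $q\le p$ and every level $\xi<\lambda$ there are a level $\zeta$ with $\xi\le\zeta<\lambda$, two conditions $q_{0},q_{1}\le q$, and two distinct nodes $t_{0}\ne t_{1}$ on level $\zeta$ such that $q_{i}\Vdash\dot b(\zeta)=t_{i}$. I would prove this by contraposition: if no such splitting existed above some $\xi$ below some $q$, then for each $\zeta\ge\xi$ all extensions of $q$ deciding $\dot b(\zeta)$ would agree on a single node $c(\zeta)$ (here one uses that deciding $\dot b(\zeta)$ is dense below $q$, since the $\zeta$-th level of $T$ lies in $V$). The map $\zeta\mapsto c(\zeta)$ would then be a cofinal branch lying in $V$ which $q$ forces to equal $\dot b$, contradicting $p\Vdash\dot b\notin V$.

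Using the splitting claim I would build, by recursion on $\alpha\le\kappa$, a tree of conditions $\langle q_{s}:s\in{}^{\alpha}2\rangle$ and a continuous increasing sequence of levels $\langle\eta_{\alpha}:\alpha\le\kappa\rangle$ with each $\eta_{\alpha}<\lambda$, maintaining the invariant that $q_{s}$ decides $\dot b(\eta_{|s|})$ to be a single node $x_{s}$ of $T$, that the assignment is $\le$-decreasing along branches, and that $x_{s}\ne x_{s'}$ whenever $s\ne s'$ have the same length. At a successor step I apply the splitting claim once above $\eta_{\alpha}$ to each current $q_{s}$, set $\eta_{\alpha+1}$ to be the supremum of the splitting levels produced, and then extend each of the two children past $\eta_{\alpha+1}$ to decide $\dot b(\eta_{\alpha+1})$ as a single node; distinctness propagates because two nodes lying above distinct nodes of a common lower level are themselves distinct. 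At every limit stage $\alpha\le\kappa$ (including $\alpha=\kappa$) the chain $\langle q_{s\restriction\beta}:\beta<\alpha\rangle$ has length $<\kappa^{+}$, so by $\kappa^{+}$-closure of $P$ it has a lower bound, which I extend to decide $\dot b(\eta_{\alpha})$; distinctness at limit length follows by applying the successor-stage distinctness to the first coordinate where two sequences differ.

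The step I expect to be the main obstacle is keeping the common level below $\lambda$ at every stage, and this is precisely where the regularity of $\lambda$ and the width bound on $T$ interact. At stage $\alpha$ the invariant produces $2^{|\alpha|}$ pairwise distinct nodes $x_{s}$ on the single level $\eta_{\alpha}$; since $|T_{\eta_{\alpha}}|<\lambda$, if ever $2^{|\alpha|}\ge\lambda$ the construction already yields a contradiction, so as long as it proceeds we have $2^{|\alpha|}<\lambda$, whence each successor supremum is a supremum of fewer than $\lambda$ ordinals below $\lambda$ and stays below $\lambda$ by regularity; the limit suprema are over $|\alpha|\le\kappa<\lambda$ ordinals and likewise stay below $\lambda$. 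Thus the recursion reaches stage $\kappa$ with $\delta:=\eta_{\kappa}<\lambda$, and the invariant gives $2^{\kappa}$ pairwise distinct nodes on level $\delta$. Since $2^{\kappa}\ge\lambda>|T_{\delta}|$, this is the desired contradiction, so $P$ adds no cofinal branch to $T$.
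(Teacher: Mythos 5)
Your proof is correct: it is precisely the classical Silver ``tree of conditions'' argument, which is exactly what the paper invokes for this Fact without giving an in-text proof (it cites Silver via Abraham and Kunen). In particular, you handle the one genuinely delicate point correctly — keeping the levels $\eta_\alpha$ below $\lambda$ by observing that the invariant already places $2^{|\alpha|}$ pairwise distinct nodes on level $\eta_\alpha$, so either $2^{|\alpha|}\le |T_{\eta_\alpha}|<\lambda$ and regularity of $\lambda$ keeps the suprema small, or the construction has already produced the desired contradiction.
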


These facts can be generalized as follows (for the first fact see \cite{Unger:succ}; the first statement of the second fact appeared in \cite{JSkurepa} with $\kappa=\aleph_0$ and $\lambda=\aleph_1$, the general version is due to Unger in \cite{UNGER:1}).

\begin{fact}\label{F:Square_cc}
Let $\kappa$ be a regular cardinal and assume that $P$ is a forcing notion such that square of $P$, $P\x P$, is $\kappa$-cc. If $T$ is a tree of height $\kappa$, then forcing with $P$ does not add cofinal branches to $T$.
\end{fact}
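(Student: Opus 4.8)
The plan is to argue by contradiction and manufacture an antichain of size $\kappa$ in $P\x P$, which is impossible since $P\x P$ is $\kappa$-cc. So suppose some $p\in P$ forces that $\dot b$ is a cofinal branch of $\check T$ which does not lie in the ground model $V$. The guiding idea is to view a $(P\x P)$-generic as a pair $G_0,G_1$ of mutually generic $P$-generics and to compare the two copies of the branch, $b_0=\dot b^{G_0}$ and $b_1=\dot b^{G_1}$. Since $G_0$ and $G_1$ are mutually generic we have $V[G_0]\cap V[G_1]=V$; as any cofinal branch lying in both $V[G_0]$ and $V[G_1]$ would then lie in $V$, while $b_0,b_1\notin V$, we must have $b_0\ne b_1$. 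Two distinct cofinal branches of $T$ split at some level and are $T$-incomparable at every higher level, so below $(p,p)$ it is dense to force that $\dot b_0$ and $\dot b_1$ take distinct (hence $<_T$-incomparable) values at some prescribed-or-higher level. This ``splitting is dense'' phenomenon is the engine of the construction.

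First I would record the splitting lemma in $V$: for every $q\le p$ there are $q',q''\le q$ and a level $\gamma<\kappa$ with $q'\Vdash \dot b(\gamma)=\check u$ and $q''\Vdash\dot b(\gamma)=\check v$ for distinct nodes $u\ne v$ on level $\gamma$; otherwise the unique values forced by extensions of $q$ would assemble, in $V$, into a branch equal to $\dot b$, contradicting freshness. I would then build, by recursion on $\alpha<\kappa$, pairs $(q^0_\alpha,q^1_\alpha)\le(p,p)$ together with a strictly increasing sequence of split levels $\delta_\alpha$ so that $q^0_\alpha\Vdash\dot b(\delta_\alpha)=\check u_\alpha$ and $q^1_\alpha\Vdash\dot b(\delta_\alpha)=\check v_\alpha$ with $u_\alpha\ne v_\alpha$, and, crucially, so that both coordinates $q^0_\alpha,q^1_\alpha$ decide $\dot b(\delta_\beta)$ for every $\beta<\alpha$, to one common value (intuitively because the pair should agree below the level $\delta_\alpha$ at which it first splits). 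Granting this coherence, the family is an antichain in $P\x P$: fix $\beta<\alpha$ and suppose the two pairs were compatible, say $q\le q^0_\alpha,q^0_\beta$ and $r\le q^1_\alpha,q^1_\beta$. Then $q$ forces $\dot b(\delta_\beta)$ to equal both $u_\beta$ (via $q^0_\beta$) and the common value $m$ that $q^0_\alpha$ assigns there, so $m=u_\beta$, while $r$ forces $\dot b(\delta_\beta)$ to equal both $v_\beta$ (via $q^1_\beta$) and the same $m$ (via $q^1_\alpha$), so $m=v_\beta$; as $u_\beta\ne v_\beta$ this is impossible. Hence the pairs are incompatible in $P\x P$, and since there are $\kappa$ of them we contradict the $\kappa$-cc of $P\x P$.

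The main obstacle is precisely this coherence requirement at limit stages of the recursion: at a limit $\alpha$ I need a splitting pair whose two coordinates \emph{simultaneously} decide $\dot b$ at all of the infinitely many earlier split levels $\{\delta_\beta\mid\beta<\alpha\}$, and to a common value on each. Because $P$ is only assumed $\kappa$-cc (not closed or distributive), no single condition need decide infinitely many values of $\dot b$, so this cannot be arranged by a naive fusion. This is exactly the point where one must use that the \emph{square} $P\x P$ is $\kappa$-cc rather than merely $P$; concretely, via the equivalence in Lemma~\ref{L:iterace_property}(iii), that $P\x P=P*\check P$ is $\kappa$-cc iff $P$ is $\kappa$-cc and $\Vdash_P\check P$ is $\kappa$-cc, one can pass to the extension $V[G_1]$ by one copy, where the remaining copy of $P$ is still $\kappa$-cc and where the already-present branch $b_1$ furnishes a fixed ground-model ``trunk'' against which the fresh branch $b_0$ can be compared, supplying the missing coherence without any appeal to closure. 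Carrying this reflection out carefully, and checking that $\kappa$ is preserved so that the levels $\delta_\alpha$ remain below $\kappa$, is the technical heart of the argument; for the details I would follow Unger \cite{UNGER:1}.
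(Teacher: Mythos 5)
Your overall strategy is the right one --- the contradiction via a size-$\kappa$ antichain in $P\times P$, the splitting lemma, and your incompatibility computation for pairs with strictly increasing split levels are exactly the standard argument (the paper states this Fact without proof, citing Unger, and Unger's proof is the one you are circling). But the obstacle you identify at limit stages is not real, and this is the genuine gap: you assert that ``no single condition need decide infinitely many values of $\dot b$,'' overlooking that since $T$ and its ordering lie in $V$ and $\dot b$ is forced to be a cofinal branch, a condition deciding $\dot b(\gamma)$ at a \emph{single} level $\gamma$ automatically decides $\dot b(\beta)$ for \emph{every} $\beta<\gamma$: the value must be the unique $T$-predecessor at level $\beta$ of the decided node, and that predecessor is computed in $V$. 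So no fusion is needed, and limit stages are treated identically to successor stages: at stage $\alpha$ set $\delta_\alpha=\sup_{\beta<\alpha}(\gamma_\beta+1)<\kappa$ (regularity of $\kappa$ in $V$; the entire recursion takes place in $V$, so your worry about preserving $\kappa$ is moot), extend a single condition $q_\alpha\le p$ with $q_\alpha\Vdash\dot b(\delta_\alpha)=\check w_\alpha$, and apply your splitting lemma \emph{below} $q_\alpha$ to obtain $q^0_\alpha,q^1_\alpha\le q_\alpha$, a level $\gamma_\alpha\ge\delta_\alpha$, and nodes $u_\alpha\ne v_\alpha$ forced on the two sides. Both coordinates extend $q_\alpha$, hence both force $\dot b(\gamma_\beta)=w_\alpha\restriction\gamma_\beta$ for all $\beta<\alpha$ --- precisely the coherence you wanted --- and your own computation then yields $u_\beta=w_\alpha\restriction\gamma_\beta=v_\beta$ from any common extension, contradicting $u_\beta\ne v_\beta$. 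The $\kappa$ pairs thus form an antichain in $P\times P$, and the square hypothesis is used exactly once, at this final step.

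By contrast, the workaround you actually propose --- passing to $V[G_1]$, where $P$ remains $\kappa$-cc, and comparing the fresh branch $b_0$ against the ``trunk'' $b_1$ --- cannot close the gap as sketched. Over $V[G_1]$ your hypotheses amount to: $P$ is $\kappa$-cc and $\dot b$ is fresh over $V[G_1]$; but a $\kappa$-cc forcing can perfectly well add a fresh branch to a tree of height $\kappa$ (force with a Suslin tree: it is ccc and adds a branch to itself, while its square is not ccc), so no argument run purely from those hypotheses inside $V[G_1]$ can succeed. Moreover, an antichain built from conditions forcing $\dot b$ off $b_1$ at increasing levels meets exactly the compatibility problem you started with: you cannot force $b_0$ to agree with $b_1$ at a prescribed high level, since the divergence of the two branches may already have occurred arbitrarily low. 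Deferring these details to the literature defers the entire technical content of the proof; the fix is the one-level decision trick above, not reflection to an intermediate model.
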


\begin{fact}\label{F:ccc_Closed}
Let $\kappa < \lambda$ be regular cardinals and $2^{\kappa}\geq\lambda$. Assume that $P$ and $Q$ are forcing notions such that $P$ is $\kappa^{+}$-cc and $Q$ is $\kappa^{+}$-closed. If $T$ is a $\lambda$-tree in $V[P]$, then forcing with $Q$ over $V[P]$ does not add cofinal branches to $T$.
\end{fact}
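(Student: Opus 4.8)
The plan is to run the Silver-style tree-of-conditions argument behind Fact~\ref{F:Closed}, but to carry it out in the ground model $V$ rather than in $V[P]$, using the $\kappa^+$-chain condition of $P$ to handle the $P$-side. The reason we cannot simply invoke Fact~\ref{F:Closed} inside $V[P]$ is the \emph{main obstacle}: although $Q$ is $\kappa^+$-closed in $V$, after forcing with $P$ it is in general only $\kappa^+$-distributive (this is exactly Lemma~\ref{L:Easton}(i)), and building a splitting tree of conditions of height up to $\kappa$ needs genuine closure to take lower bounds at limit stages. Since $P,Q\in V$ we have $V[P][Q]=V[P\x Q]$, so a branch added by $Q$ over $V[P]$ is named by a $P\x Q$-name in $V$, and we may argue entirely in $V$.

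Suppose toward a contradiction that some condition $(p_0,q_0)$ forces that $\dot b$ is a cofinal branch of the $\lambda$-tree $\dot T$ with $\dot b\notin V[\dot G_P]$. Let $\mu\le\kappa$ be \emph{least} with $2^\mu\ge\lambda$; such $\mu$ exists because $2^\kappa\ge\lambda$, and by minimality $2^\nu<\lambda$ for every $\nu<\mu$. I would build, by recursion on $\alpha\le\mu$, a tree of conditions $\langle q_s : s\in {}^{\le\mu}2\rangle$ in $Q$ below $q_0$, decreasing along branches, together with a continuous increasing sequence of tree-levels $\langle\delta_\alpha:\alpha\le\mu\rangle$ below $\lambda$ and $P$-names $\dot x_s$ for nodes of $T$ at level $\delta_{|s|}$, such that $p_0\Vdash_P\big[\,q_s\Vdash_Q \dot x_s\in\dot b\,\big]$ and such that, for each $s$, the two conditions $q_{s\conc 0}$ and $q_{s\conc 1}$ are forced by $p_0$ to drive $\dot b$ through \emph{incomparable} nodes $\dot x_{s\conc 0}\perp\dot x_{s\conc 1}$.

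The splitting step is where the two hypotheses combine. Fixing $s$ and working below $p_0$: since $(p_0,q_0)$ forces $\dot b\notin V[\dot G_P]$, densely below $p_0$ one can find two extensions of $q_s$ in $Q$ and a level $<\lambda$ at which they force $\dot b$ through incomparable nodes (otherwise $q_s$ would compute $b$ inside $V[\dot G_P]$). Taking a maximal antichain $A$ below $p_0$ deciding such data, we get $|A|\le\kappa$ by the $\kappa^+$-cc of $P$; I then use the $\kappa^+$-closure of $Q$ to choose single conditions $q_{s\conc 0},q_{s\conc 1}\le q_s$ lying below the respective $Q$-conditions read off from every element of $A$, and set $\delta_{|s|+1}<\lambda$ above the (at most $\kappa$ many, hence bounded) levels appearing in $A$. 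At limit stages $\ell\le\mu$ the closure of $Q$ supplies lower bounds for the decreasing sequences of length $\le\mu\le\kappa$, and $\delta_\ell=\sup_{\beta<\ell}\delta_\beta<\lambda$ since $\ell\le\kappa<\lambda$ is regular. The choice of $\mu$ is precisely what keeps every level $\delta_\alpha$ below $\lambda$: at each stage only $2^{|\alpha|}<\lambda$ nodes are split, so the relevant suprema stay below the regular cardinal $\lambda$.

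Finally, fix any $P$-generic $G\ni p_0$. For each $s\in{}^\mu 2$ the realized node $x_s=\dot x_s[G]$ lies at the single level $\delta_\mu<\lambda$, and for $s\ne s'$ the forced incomparability at their splitting node propagates upward, so $x_s\ne x_{s'}$. Thus $s\mapsto x_s$ injects $({}^\mu 2)^V$, a set of size $2^\mu\ge\lambda$, into level $\delta_\mu$ of $T$; since $P$ is $\kappa^+$-cc it preserves $\lambda$ (as $\lambda>\kappa$ is regular, $\lambda\ge\kappa^+$), so this level has size $\ge\lambda$ in $V[G]$, contradicting that $T$ is a $\lambda$-tree. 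This contradiction shows that no cofinal branch is added, completing the argument.
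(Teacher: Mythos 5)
Your overall architecture---working in $V$ with the product $P\x Q$, taking the least $\mu\le\kappa$ with $2^\mu\ge\lambda$, building a binary tree of $Q$-conditions of height $\mu$ whose splitting levels stay below $\lambda$ by regularity, and finishing by injecting $({}^\mu 2)^V$ into a single level of $T$---is exactly the architecture of the argument this paper cites rather than reproves (the Fact is attributed to Jensen--Schlechta for $\kappa=\aleph_0$, $\lambda=\aleph_1$, and to Unger in general). But your splitting step has a genuine gap. You first fix a maximal antichain $A$ below $p_0$, each $a\in A$ carrying witnesses $q_0^a,q_1^a\le q_s$ and a splitting level, and then you ``use the $\kappa^+$-closure of $Q$ to choose single conditions $q_{s\conc 0},q_{s\conc 1}\le q_s$ lying below the respective $Q$-conditions read off from every element of $A$.'' The family $\set{q_0^a}{a\in A}$ is an arbitrary family of at most $\kappa$ many extensions of $q_s$; its members need not be pairwise compatible, and $\kappa^+$-closure only supplies lower bounds for \emph{decreasing} sequences (this is how the paper's Definition \ref{Def:Forcing_Properties} reads). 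For instance with $Q=\Add(\kappa^+,1)$, two such witnesses can already be incompatible functions. So no such $q_{s\conc 0}$ need exist, and this is precisely the point where the interaction of the chain condition and the closure must be arranged more delicately.

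The repair is the standard interleaved recursion, and it is worth seeing that you cannot even fix $A$ in advance. Build, by recursion of length $<\kappa^+$, decreasing sequences $u_i,v_i\le q_s$ in $Q$ \emph{together with} an antichain $\set{a_i}{i<\eta}$ below $p_0$: at step $i$, pick (if possible) some $p$ incompatible with all $a_j$, $j<i$; since $\dot b$ is forced to be new over $V[\dot G_P]$, the set of $p^*\le p$ admitting $u_{i+1}\le u_i$, $v_{i+1}\le v_i$ and $\delta_i<\lambda$ such that $p^*$ forces $u_{i+1},v_{i+1}$ to drive $\dot b$ through distinct nodes at level $\delta_i$ is dense below $p$; let $a_i$ be such a $p^*$ with its witnesses. (This is why a pre-fixed maximal antichain fails: the witnesses for $a_i$ must extend the conditions constructed at earlier stages, and the density argument only produces the splitting below some $p^*\le a_i$, not below $a_i$ itself.) Limit stages pass through by $\kappa^+$-closure, and the recursion must halt at some $\eta\le\kappa$, since otherwise it would produce an antichain of size $\kappa^+$---so the $\kappa^+$-cc is used to \emph{terminate} the construction, not merely to bound $|A|$ after the fact. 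At termination $\set{a_i}{i<\eta}$ is maximal below $p_0$, so $p_0$ itself forces that the lower bounds $q_{s\conc 0}$ of $\seq{u_i}{i<\eta}$ and $q_{s\conc 1}$ of $\seq{v_i}{i<\eta}$ drive $\dot b$ through nodes differing below $\delta_s:=\sup_{i<\eta}\delta_i<\lambda$. One last point: with this construction the nodes are decided only relative to the various $a_i$ and at varying levels, so your clean ``$p_0\Vdash[q_s\Vdash \dot x_s\in\dot b]$'' with a single name should either be recovered by mixing names over the antichain, or avoided by extending each $q_s$, $s\in{}^\mu 2$, once more in $V[G]$ to decide $\dot b(\delta_\mu)$; incomparability then propagates upward as you say, and the rest of your counting argument (including preservation of $\lambda$ via the $\kappa^+$-cc) is correct.
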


\subsection{Mitchell forcing}\label{S:Mitchell}

Mitchell forcing was defined by Mitchell in \cite{M:tree}. In this section we review several variants of the Mitchell forcing, which can be found in papers \cite{ABR:tree} and \cite{CUMFOR:tp}. All proofs of facts stated below can be found in these papers as well. If $\kappa$ is a regular cardinal and $\alpha$ a limit ordinal, let $\Add(\kappa,\alpha)$ be the set of all partial functions of size $<\kappa$ from $\mx{SuccOrd}(\alpha)$ to $2$, ordered by reverse inclusion, where $\mx{SuccOrd}(\alpha)$ is the set of all successor ordinals below $\alpha$.\footnote{This is just a technical assumption which will be useful in analysis of Mitchell forcing. See paragraph below Remark \ref{T_Collaps}.} It is easy to see that this forcing is isomorphic to the usual Cohen forcing for adding $\alpha$-many subsets of $\kappa$. It follows that if $\beta < \alpha$ and $p \in \Add(\kappa,\alpha)$, then $\restr{p}{\beta}$ is in $\Add(\kappa,\beta)$.

\begin{definition}\label{Def:Mitchell}
Let $\kappa$ be a regular cardinal and $\lambda>\kappa$ an inaccessible cardinal. The \emph{Mitchell forcing} at $\kappa$ of length $\lambda$, denoted by $\M(\kappa,\lambda)$, is the set of all pairs $(p,q)$ such that $p$ is in Cohen forcing $\Add(\kappa,\lambda)$ and $q$ is a function with $\dom{q}\sub\lambda$ of size at most $\kappa$ and for every $\alpha\in\dom{q}$, $\alpha$ is a successor cardinal and it holds:
\beq
1_{\Add(\kappa,\alpha)}\Vdash q(\alpha)\in\dot{\Add}(\kappa^+,1),
\eeq 
where $\dot{\Add}(\kappa^+,1)$ is the canonical $\Add(\kappa,\alpha)$-name for Cohen forcing at $\kappa^+$. 
A condition $(p,q)$ is stronger than $(p',q')$ if
\bce[(i)]
\item $p\leq p'$,
\item $\dom{q}\supseteq \dom{q'}$ and for every $\alpha\in\dom{q'}$, ${p}\rest{\alpha}\Vdash q(\alpha)\leq q'(\alpha)$.
\ece
\end{definition}

Assuming that $\kappa<\lambda$, $\kappa$ is regular, and $\lambda$ is inaccessible, Mitchell forcing $\M(\kappa,\lambda)$ is $\lambda$-Knaster and $\kappa$-closed. Moreover if $\kappa^{<\kappa}=\kappa$, $\M(\kappa,\lambda)$ preserves $\kappa^+$ (by a product analysis of Abraham  \cite{ABR:tree}), collapses cardinals exactly in the open interval $(\kappa^+,\lambda)$ and forces $2^\kappa=\lambda = \kappa^{++}$.

\begin{theorem}\emph{(Mitchell)} Assume $\kappa^{<\kappa} = \kappa$. If $\lambda$ is a weakly compact cardinal, then $\M(\kappa,\lambda)$ forces the tree property at $\lambda = \kappa^{++}$.
\end{theorem}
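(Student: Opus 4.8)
The plan is to work in the extension $V[G]$ by $\M := \M(\kappa,\lambda)$, where (as recorded just before the statement) $\M$ forces $2^\kappa = \lambda = \kappa^{++}$ and preserves $\lambda$ because it is $\lambda$-Knaster. Fix an arbitrary $\lambda$-tree $T \in V[G]$; it suffices to produce a cofinal branch through it, since $T$ is arbitrary and this then yields $\TP(\lambda)$ for $\lambda = \kappa^{++}$. I would obtain the branch from the weak compactness of $\lambda$ in $V$ via a lifting argument, and then push it back down to $V[G]$ using the branch-preservation facts of Section~\ref{sec:tf}.

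First I would fix a name $\dot T \in V$ for $T$ and, using the embedding form of weak compactness, choose a transitive $M \models \ZF^{-}$ of size $\lambda$ with ${}^{<\lambda}M \subseteq M$, $\lambda + 1 \subseteq M$ and $\{\kappa,\lambda,\M,\dot T\} \in M$, together with an elementary $j : M \to N$ with critical point $\lambda$ and $N$ transitive. Then $j(\M) = \M(\kappa,j(\lambda))$ as computed in $N$, the forcing $\M$ is its initial segment consisting of the conditions supported below $\lambda$, and $j \rest \M$ is the identity (conditions of $\M$ have size $<\lambda$). Writing $j(\M) \cong \M * \dot S$ with quotient $S = j(\M)/\M$, it is enough to find an $S$-generic $H$ over $N[G]$: since $j''G = G$ sits in the initial segment, $G * H$ is then $j(\M)$-generic over $N$ with $j''G \subseteq G*H$, so $j$ lifts to $j : M[G] \to N[G*H]$. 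In $N[G*H]$ the tree $j(T)$ is a $j(\lambda)$-tree end-extending $T$, hence its level $\lambda$ is nonempty; fixing $x \in j(T)_\lambda$, the set $b = \set{t \in T}{t <_{j(T)} x}$ is a cofinal branch of $T$.

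The key point — and the step I expect to be the main obstacle — is to arrange that this branch $b$ already lies in $V[G]$, contradicting $T$ being Aronszajn and so finishing the proof. To this end I would build $H$ not just over $N[G]$ but by genuinely forcing over all of $V[G]$; genericity over $N[G] \subseteq V[G]$ is then automatic, and $b$ ends up in a generic extension $V[G][H]$ of $V[G]$. It remains to show that $S$ adds no cofinal branch to $T$ over $V[G]$. Here I would invoke Abraham's product analysis of Mitchell forcing (\cite{ABR:tree}): the quotient $S$ is a projection of the product of its Cohen tail $\Add(\kappa,[\lambda,j(\lambda)))$ and its term-forcing tail, the latter being $\kappa^+$-closed, so it is enough to see that this product adds no cofinal branch to $T$. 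Forcing first with the Cohen tail: since $\kappa^{<\kappa} = \kappa$ is preserved into $V[G]$ (as $\M$ is $\kappa$-closed) and $\lambda = \kappa^{++}$ is regular there, a $\Delta$-system argument shows $\Add(\kappa,[\lambda,j(\lambda)))$ is $\lambda$-Knaster, whence by Fact~\ref{F:Knaster} it adds no cofinal branch to the height-$\lambda$ tree $T$, which therefore remains a $\lambda$-tree. Forcing next with the term-forcing tail over that model: as $2^\kappa \ge \lambda$ holds throughout and the term forcing is $\kappa^+$-closed, Fact~\ref{F:ccc_Closed} (with the Cohen tail as the $\kappa^+$-cc forcing and the term forcing as the $\kappa^+$-closed forcing) shows it adds no cofinal branch to $T$ either. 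Hence the product, and a fortiori $S$, adds no cofinal branch to $T$, so $b \in V[G]$.

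Putting this together, every $\lambda$-tree of $V[G]$ has a cofinal branch, i.e.\ $\M$ forces $\TP(\lambda)$ at $\lambda = \kappa^{++}$. The points on which I would spend the most care are the verification that the term-forcing tail is $\kappa^+$-closed and that the product of the two tail forcings genuinely projects onto the quotient $S$ — this is exactly Abraham's analysis, which I would cite rather than reprove — together with the bookkeeping ensuring the lift $j : M[G] \to N[G*H]$ is legitimate, in particular that $\M^N = \M$ and that $G$ is $N$-generic, both of which follow from ${}^{<\lambda}M \subseteq M$ and $|N| = \lambda$.
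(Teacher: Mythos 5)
Your proposal is correct, and it is essentially the classical argument that the paper itself does not reprove but defers to its sources (Mitchell, Abraham, Cummings--Foreman): lift a weakly compact embedding by forcing with the quotient $j(\M)/G$, analyse that quotient via Abraham's projection from $\Add(\kappa,[\lambda,j(\lambda)))\times(\kappa^+\text{-closed term forcing})$, and pull the branch back into $V[G]$ using precisely the branch-preservation lemmas the paper records for this purpose (Facts \ref{F:Knaster} and \ref{F:ccc_Closed}). The only stylistic divergence is that you justify the Cohen step by a $\lambda$-Knaster $\Delta$-system computation in $V[G]$, where the paper's own later arguments (e.g.\ Claim \ref{Cl:Cohens}) instead invoke square-$\kappa^+$-cc via Fact \ref{F:Square_cc}; both are valid here, so no correction is needed.
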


We modify the definition of Mitchell forcing in two steps. In the first step we define the variation of Mitchell forcing where the Cohen part of Mitchell forcing is taken from some suitable inner model of our universe. In the second step we add a third coordinate which will prepare the universe for a further lifting of an appropriate embedding.

\begin{definition}\label{Def_Mitchell_2Models}
Let $V\sub W$ be two inner models of $\ZFC$ with the same ordinals, $\kappa$ be a regular cardinal and $\lambda>\kappa$ inaccessible in $W$. Suppose that $\Add(\kappa,\lambda)^V$ is in $W$ $\kappa^+$-cc and $\kappa$-distributive. In $W$, the \emph{Mitchell forcing} at $\kappa$ of length $\lambda$, denoted by $\M(\kappa,\lambda, V,W)$, is the set of all pairs $(p,q)$, where $p$ is a condition in $\Add(\kappa,\lambda)^V$ and $q$ is a function in $W$ such that $\dom{q}$ is a subset of open interval $(\kappa,\lambda)$ of size at most $\kappa$ and for every $\alpha\in\dom{q}$, $\alpha$ is a successor cardinal and the following holds:
\beq
1_{\Add(\kappa,\alpha)^V} \Vdash^W q(\alpha)\in\dot{\Add}(\kappa^+,1)^W,
\eeq
where $\dot{\Add}(\kappa^+,1)^W$ is $\Add(\kappa,\alpha)^V$-name for the Cohen forcing at $\kappa^+$ over the model $W$. The ordering is defined by $(p,q)\leq (p',q')$ if
\bce[(i)]
\item $p\leq p'$,
\item $\dom{q}\supseteq \dom{q'}$ and for every $\alpha\in\dom{q'}$, ${p}\rest{\alpha}\Vdash q(\alpha)\leq q'(\alpha)$.
\ece
\end{definition}

Now we review the original forcing which iteration was used to force the tree property below $\aleph_\omega$. For more details see \cite{CUMFOR:tp} and \cite{ABR:tree}.

\begin{fact}
Let $\lambda$ be a supercompact cardinal. Then there is a function $F$ from $\lambda$ to $V_{\lambda}$ such that for  all $\mu\ge\lambda$ and all $x\in H_{\mu^+}$ there is a supercompactness measure $U$ on $\mathscr{P}_\lambda(\mu)$ such that $j_U(F)(\lambda)=x$. We call $F$ a \emph{Laver function} for $\lambda$.
\end{fact}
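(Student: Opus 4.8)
The plan is to prove this by Laver's standard \emph{anticipation} (diamond) argument for supercompact cardinals. First I would define $F:\lambda\to V_\lambda$ by transfinite recursion. Given $F\restriction\alpha$ for $\alpha<\lambda$, call a pair $(x,\mu)$ a \emph{counterexample at $\alpha$} if $\mu\ge\alpha$, $x\in H_{\mu^+}$, and there is no supercompactness measure $U$ on $\mathscr{P}_\alpha(\mu)$ with $j_U(F\restriction\alpha)(\alpha)=x$. If some counterexample exists, let $\mu_\alpha$ be least such and $x_\alpha\in H_{\mu_\alpha^+}$ of least rank; if moreover $\mu_\alpha<\lambda$ (so that $x_\alpha\in H_{\mu_\alpha^+}\subseteq V_\lambda$, using that $\lambda$ is inaccessible) set $F(\alpha)=(x_\alpha,\mu_\alpha)$, and otherwise set $F(\alpha)=\emptyset$. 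This guarantees that $F$ maps $\lambda$ into $V_\lambda$.

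Next I would assume toward a contradiction that $F$ is \emph{not} a Laver function for $\lambda$. Then there is a least $\mu\ge\lambda$ for which some $x\in H_{\mu^+}$ is not anticipated, i.e.\ no supercompactness measure $U$ on $\mathscr{P}_\lambda(\mu)$ satisfies $j_U(F)(\lambda)=x$; among these I fix $x$ of least rank. I would then fix a supercompactness measure $W$ on $\mathscr{P}_\lambda(\theta)$ for a $\theta$ chosen large enough that the associated embedding $j=j_W:V\to M$ satisfies $\mathrm{crit}(j)=\lambda$, $j(\lambda)>\mu$, and $M^{\theta}\subseteq M$ with $\theta$ big enough that $M$ contains every subset of each $\mathscr{P}_\lambda(\mu')$ with $\mu'\le\mu$ (hence all supercompactness measures on them) and computes the restrictions of their ultrapower maps to $H_{(\mu')^+}$ correctly. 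Since $\mathrm{crit}(j)=\lambda$ we have $j(F)\restriction\lambda=F$, and $V_{\lambda}\subseteq M$ gives $F\in M$.

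The key computation is that $j(F)(\lambda)=x$. Applying the recursive definition of $F$ inside $M$ at the stage $\alpha=\lambda<j(\lambda)$, the value $j(F)(\lambda)$ is the least-rank witness to the least counterexample at $\lambda$ for the function $j(F)\restriction\lambda=F$, as computed in $M$. By the closure of $M$ the predicates ``$U$ is a supercompactness measure on $\mathscr{P}_\lambda(\mu')$'' and ``$j_U(F)(\lambda)=x'$'' are absolute between $V$ and $M$ for all $\mu'\le\mu$ and $x'\in H_{(\mu')^+}$, so the least counterexample at $\lambda$ computed in $M$ is exactly the externally chosen $(x,\mu)$; since $\mu<j(\lambda)$ this is the nontrivial value and $j(F)(\lambda)=x$. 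Finally I would read off the desired measure by the usual factoring: put $U=\{A\subseteq\mathscr{P}_\lambda(\mu)\mid j[\mu]\in j(A)\}$, a supercompactness measure on $\mathscr{P}_\lambda(\mu)$, and let $k:\mathrm{Ult}(V,U)\to M$ be the factor map with $k\circ j_U=j$ and $\mathrm{crit}(k)>\mu$. Then $k(j_U(F)(\lambda))=j(F)(\lambda)=x=k(x)$, the last equality because $x\in H_{\mu^+}$ and $\mathrm{crit}(k)>\mu$; injectivity of $k$ yields $j_U(F)(\lambda)=x$, contradicting the choice of $(x,\mu)$ as an unanticipated pair.

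The step I expect to be the main obstacle is the absoluteness/correctness argument in the third paragraph: I must calibrate $\theta$, and hence the degree of closure of $M$, so that $M$ genuinely sees the same supercompactness measures on the relevant $\mathscr{P}_\lambda(\mu')$ and evaluates $j_U(F)(\lambda)$ exactly as $V$ does, ensuring that the least counterexample at $\lambda$ is computed identically on both sides and thus that $j(F)(\lambda)$ equals the externally chosen $x$. Once that correctness is secured, the construction of $U$ via $j[\mu]$ and the factoring lemma producing $k$ with $\mathrm{crit}(k)>\mu$ are entirely routine.
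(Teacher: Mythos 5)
The paper does not prove this statement at all: it is quoted as a background Fact (Laver's theorem, imported from the Cummings--Foreman framework), so there is no proof of record to compare against. Your proposal is the standard Laver anticipation argument, and its architecture is the right one: recursion on least counterexamples, reflection via a $\theta$-supercompactness embedding for $\theta$ large, recognition of $(x,\mu)$ as the least counterexample inside $M$, and then the derived measure $U=\set{A\sub\mathscr{P}_\lambda(\mu)}{j[\mu]\in j(A)}$ with the factor map $k$ satisfying $\mathrm{crit}(k)>\mu$ (and, by the usual coding of elements of $H_{\mu^+}$ by relations on $\mu$, $k\restriction H_{\mu^+}^{\Ult(V,U)}=\mathrm{id}$).

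There are, however, two concrete slips. First, you set $F(\alpha)=(x_\alpha,\mu_\alpha)$, the \emph{pair}, while your notion of counterexample demands a measure with $j_U(F\restriction\alpha)(\alpha)=x$. Run through your own final computation with this definition: the recursion in $M$ gives $j(F)(\lambda)=(x,\mu)$, not $x$, and the factoring then yields $j_U(F)(\lambda)=(x,\mu)$ --- which does \emph{not} contradict the choice of $x$ as unanticipated, since that choice only says no $U$ satisfies $j_U(F)(\lambda)=x$. The fix is trivial and is what Laver does: set $F(\alpha)=x_\alpha$ alone; the ordinal $\mu_\alpha$ need not be stored, because the recursion in $M$ recomputes the least counterexample from $j(F)\restriction\lambda=F$, and minimality of $\mu$ is only used to see that all earlier pairs are anticipated in $M$ via measures imported from $V$. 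Second, your calibration of $\theta$ is stated too weakly: having every subset of $\mathscr{P}_\lambda(\mu')$ in $M$ does not by itself put the supercompactness measures in $M$, since a measure is a family of $2^{(\mu')^{<\lambda}}$ many such subsets. What you need is ${}^{\theta}M\sub M$ for, say, $\theta\ge 2^{2^{\mu^{<\lambda}}}$; then each measure on each $\mathscr{P}_\lambda(\mu')$ with $\mu'\le\mu$, and each function $f:\mathscr{P}_\lambda(\mu')\to H_{(\mu')^+}$, belongs to $M$, so ``$U$ is a supercompactness measure'' and the value $j_U(F)(\lambda)$ are computed identically in $V$ and $M$ (in both directions: $V$-measures are in $M$, and $M$-measures measure all $V$-subsets by the same closure). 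You correctly flagged this step as the delicate one; with these two repairs the argument is complete and is exactly the standard proof.
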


Let $F_\lambda:\lambda\then V_\lambda$ denote a Laver function from previous fact for a given supercompact cardinal $\lambda$.

\begin{definition}
Let $V\sub W$ be two inner models of $\ZFC$ with the same cardinals, $\kappa$ be a regular cardinal and $\lambda>\kappa$ supercompact in $W$. Suppose that $\Add(\kappa,\lambda)^V$ is $\kappa^+$-cc and $\kappa$-distributive in $W$. The forcing $\R(\kappa,\lambda, V, W, F_\lambda)$ is the set of all triples $(p,q,f)$ such that $(p,q)$ is in the Mitchell forcing $\M(\kappa,\lambda, V,W)$ and $f$ is a function in $W$ of size less than $\kappa^+$ 
such that $\dom{f}$ is a subset of 
\begin{multline}
\{\alpha<\lambda \; | \;\alpha\mbox{ inaccessible and }1_{\R|\alpha} \Vdash^W F_{\lambda}(\alpha) \mbox{ is an }\alpha\mbox{-directed closed forcing}\},
\end{multline}
and if $\alpha\in\dom{f}$ then $f(\alpha)\in W^{\R|\alpha}$ and $1_{\R|\alpha}\Vdash^W f(\alpha)\in F_\lambda(\alpha)$.

The ordering is defined by $(p,q,f)\leq (p',q',f')$ if
\bce[(i)]
\item $(p,q)\leq (p',q')$,
\item $\dom{f}\supseteq \dom{f'}$ and for every $\alpha\in\dom{f'}$, $(p\rest{\alpha},q\rest{\alpha},f\rest{\alpha})\Vdash f(\alpha)\leq f'(\alpha)$.
\ece
\end{definition}

Note that the previous definition should be formally defined by induction on $\lambda$, for more details see \cite{CUMFOR:tp}. Also note that the definition is made in the model $W$ and all what we are state in further is in sense of the model $W$.

Mitchell forcing $\R(\kappa,\lambda, V,W,F_\lambda)$ is $\lambda$-Knaster and $\kappa$-distributive. Moreover, it collapses the cardinals in the open interval $(\kappa^+,\lambda)$ to $\kappa^+$ and forces $2^\kappa=\lambda = \kappa^{++}$. The preservation of $\kappa^+$ is shown by means of the product analysis due to Abraham \cite{ABR:tree}.

Let $\T$ be defined as follows:
\beq
\T=\set{(\emptyset,q,f)}{(\emptyset,q,f)\in \R(\kappa,\lambda,V,W,F_\lambda)}.
\eeq
The ordering on $\T$ is the one induced from $ \R(\kappa,\lambda,V,W,F_\lambda)$. It is clear that $\T$ is $\kappa^+$-directed closed in $W$. We will call $\T$ the \emph{term forcing} (of the associated Mitchell-style forcing).

It is easy to see that the function \beq \label{eq:pi-M}\pi:\Add(\kappa,\lambda)^V\x\T\then \R(\kappa,\lambda,V,W,F_\lambda)\eeq which maps  $(p,(\emptyset,q,f))$ to $(p,q,f)$ is a projection. Since the product $\Add(\kappa,\lambda)^V \x \T$ preserves $\kappa^+$ (under assumption $\kappa^{<\kappa}=\kappa$), so does the forcing $\R(\kappa,\lambda,V,W,F_\lambda)$.

There are natural projections from Mitchell forcing of length $\lambda$ to Mitchell forcings of shorter lengths and a projection to Cohen forcing $\Add(\kappa,\lambda)^V$. For the first claim, define a function $\sigma^{\lambda,\alpha}$ from $\R(\kappa,\lambda,V,W,F_\lambda)$ to $\R(\kappa,\alpha,V,W,F_\lambda)$, where $\alpha$ is an ordinal between $\kappa$ and $\lambda$, as follows: $\sigma^{\lambda,\alpha}((p,q,f))=({p}\rest{\alpha},{q}\rest{\alpha},f\rest\alpha)$. For the second claim, define a function $\rho$ from $\R(\kappa,\lambda,V,W,F_\lambda)$ to $\Add(\kappa,\lambda)^V$ by $\rho((p,q,f))=p$. It is easy to see that $\sigma^{\lambda,\alpha}$ and $\rho$ are projections.

By the projection $\rho:\R(\kappa,\lambda,V,W,F_\lambda)\to \Add(\kappa,\lambda)^V$, $\R(\kappa,\lambda,V,W,F_\lambda)$ is forcing-equivalent to $\Add(\kappa,\lambda)^V*\dot{\D}$, for some $\dot{\D}$. Moreover, by the product analysis  (i.e.\ of the existence of the projection $\pi$), $\dot{\D}$ is a name for a forcing notion which is forced to be $\kappa^+$-distributive and $\kappa$-closed.

\begin{remark}\label{T_Collaps}
\emph{Notice that the term forcing $\T$ collapses the cardinals between $\kappa^+$ and $\lambda$: Suppose $\kappa^{<\kappa} = \kappa$ and $\lambda$ is inaccessible. As $\T$ is $\kappa^+$-closed, Cohen forcing $\Add(\kappa,\lambda)$ is still $\kappa^{+}$-cc and $\kappa$-closed in $V[\T]$. In particular, it does not collapse cardinals over $V[\T]$ (so it must be $\T$ which collapses the cardinals).}
\end{remark}

The term forcing analysis carries over to quotients given by the projections $\sigma^{\lambda,\alpha}$ whenever $\alpha$ is an inaccessible cardinal between $\kappa$ and $\lambda$. First note that if $\alpha$ is inaccessible then $\R(\kappa,\alpha+1,V,W,F_\lambda)$ is equivalent to $\R(\kappa,\alpha,V,W,F_\lambda)*F(\alpha)$. This holds because at limit cardinals the first coordinates are not defined.

Let $G_{\alpha+1}$ be an $\R(\kappa,\alpha+1,V,W,F_\lambda)$-generic filter and define in $V[G_{\alpha+1}]$ the quotient $\R(\kappa,\lambda,V,W,F_\lambda)/G_{\alpha+1}$ as follows:
\begin{multline}
\R(\kappa,\lambda,V,W,F_\lambda)/G_{\alpha+1}= \\ \set{(p,q,f)\in \R(\kappa,\lambda,V,W,F_\lambda)}{({p}\rest{\alpha},{q}\rest{\alpha},f\rest\alpha+1)\in G_{\alpha+1}}. 
\end{multline}
Regarding this quotient, we can now analogously define the term forcing $\T^*$ in $V[G_{\alpha+1}]$
\beq 
\T^*=\set{(\emptyset,q,f)}{(\emptyset,q,f)\in \R(\kappa,\lambda,V,W,F_\lambda)/G_{\alpha+1}}.
\eeq
and a projection $\pi^*$ from $\Add(\kappa,\lambda-\alpha)\x \T^*$ to $\R(\kappa,\lambda,V,W,F_\lambda)/G_{\alpha+1}$ by setting $\pi^*((p,(\emptyset,q,f)))=(p,q,f)$.

\begin{fact}\label{F:projekce_quotient}
 Let $\alpha$ be inaccessible and $G_{\alpha+1}$ an $\R(\kappa,\alpha+1,V,W,F_\lambda)$-generic filter. Then in $V[G_{\alpha+1}]$ the following hold:
\bce[(i)]
\item $\pi^*$ is a projection from $\Add(\kappa,\lambda-\alpha)\x\T^*$ to $\R(\kappa,\lambda,V,W,F_\lambda)/G_{\alpha+1}$.
\item $\T^*$ is $\kappa^+$-closed in $V[G_{\alpha+1}]$.
\ece
\end{fact}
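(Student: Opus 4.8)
The plan is to verify the two claims in the quotient $V[G_{\alpha+1}]$ by imitating the global analysis that was carried out for $\R(\kappa,\lambda,V,W,F_\lambda)$ itself, now relativized past the stage $\alpha$. For part (i), I would check directly that the map $\pi^*((p,(\emptyset,q,f)))=(p,q,f)$ is a projection from $\Add(\kappa,\lambda-\alpha)\x\T^*$ onto $\R(\kappa,\lambda,V,W,F_\lambda)/G_{\alpha+1}$. The three defining conditions are routine: $\pi^*$ is order-preserving because the ordering on the product is coordinatewise and the first coordinate $p$ feeds into the side conditions $p\!\rest\!\beta$ of the Mitchell and Laver coordinates exactly as in the full forcing; $\pi^*$ maps the top element to the top element; and the key density clause is that whenever $(p',q',f')\le\pi^*((p,(\emptyset,q,f)))=(p,q,f)$ in the quotient, one can find $(\bar p,(\emptyset,\bar q,\bar f))$ in the product below $(p,(\emptyset,q,f))$ whose image refines $(p',q',f')$. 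This is witnessed by simply taking $\bar p=p'$ and reading off $\bar q,\bar f$ from $q',f'$, which is legitimate precisely because in $\R$ the first coordinate can be strengthened independently of the others (the Cohen part and the term part factor through the projection $\pi$ of (\ref{eq:pi-M})). In effect part (i) is nothing more than the relativization of the already-established projection $\pi$ to the quotient past $\alpha$, using that $\R(\kappa,\alpha+1,V,W,F_\lambda)$ is equivalent to $\R(\kappa,\alpha,V,W,F_\lambda)*F(\alpha)$ so that the first coordinate of the quotient genuinely lives in $\Add(\kappa,\lambda-\alpha)$.

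For part (ii), the point is that $\T^*$ is $\kappa^+$-closed \emph{in} $V[G_{\alpha+1}]$, not merely in $W$. First I would observe that $\T^*$, viewed as a set of conditions $(\emptyset,q,f)$, is $\kappa^+$-directed closed by the same argument that gives $\kappa^+$-directed closure of the global term forcing $\T$: given fewer than $\kappa^+$ many conditions that are pairwise compatible (indeed directed), their domains have union of size at most $\kappa$, and one amalgamates the $q$-values and $f$-values using that each $q(\beta)$ is forced into $\dot{\Add}(\kappa^+,1)^W$ (itself $\kappa^+$-closed) and each $f(\beta)$ into an $\beta$-directed-closed Laver forcing with $\beta>\kappa$, so the pointwise greatest lower bounds exist. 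The subtlety is that this closure must survive passage to $V[G_{\alpha+1}]$. I would handle this with the term-forcing factorization together with Easton's lemma: $\R(\kappa,\alpha+1,V,W,F_\lambda)$ is, via $\rho$, forcing-equivalent to $\Add(\kappa,\alpha+1)^V*\dot{\D}_{\alpha+1}$ where $\dot{\D}_{\alpha+1}$ is $\kappa$-closed, and $\Add(\kappa,\alpha+1)^V$ is $\kappa^+$-cc; so any descending sequence of length $\le\kappa$ in $\T^*$ that lies in $V[G_{\alpha+1}]$ is added by a forcing of the form $(\kappa^+\text{-cc})*(\kappa\text{-closed})$ and hence, by the closure-preservation half of Lemma \ref{L:closed-dis} and the distributivity half of Lemma \ref{L:Easton}, cannot destroy the $\kappa^+$-closure that $\T^*$ enjoys in $W$.

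The step I expect to be the main obstacle is exactly this preservation of closure in the intermediate model $V[G_{\alpha+1}]$ in part (ii), since closure is the one property in Definition \ref{Def:Forcing_Properties} that is not forcing-invariant and can be fragile under passing to a generic extension. The clean way through is to never argue about closure in $V[G_{\alpha+1}]$ directly, but to locate a fewer-than-$\kappa^+$-length descending sequence inside a nicely closed submodel: one uses that $G_{\alpha+1}$ is captured by the product $\Add(\kappa,\alpha+1)^V\x\T_{\alpha+1}$ (the term forcing up to $\alpha+1$), whose $\T_{\alpha+1}$-factor is $\kappa^+$-closed and whose $\Add$-factor is $\kappa^+$-cc, and then applies Easton's lemma (Lemma \ref{L:Easton}(i)) to conclude that the Cohen factor adds no new $\le\kappa$-sequences over the term model, so that any such sequence of $\T^*$-conditions already exists where the pointwise lower bound can be formed. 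Once this is set up, the amalgamation of conditions that gives the lower bound is purely mechanical and completes the argument. $\Box$
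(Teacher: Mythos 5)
Note first that the paper does not prove this statement at all: it is quoted as a Fact, with the proofs explicitly deferred to Cummings and Foreman \cite{CUMFOR:tp} (see the opening of Section \ref{S:Mitchell}), so your attempt must be measured against the argument there. For part (i) your outline is reasonable, but the witness you describe for the density clause --- ``simply taking $\bar p=p'$ and reading off $\bar q,\bar f$ from $q',f'$'' --- does not literally work, and the step it hides is the actual content. The problem is that $(\emptyset,\bar q,\bar f)\le(\emptyset,q,f)$ in $\T^*$ requires the coordinatewise extensions to be forced \emph{outright} (e.g.\ $1\Vdash \bar q(\beta)\le q(\beta)$), whereas $(p',q',f')\le(p,q,f)$ only gives $p'\rest\beta\Vdash q'(\beta)\le q(\beta)$. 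One must mix names: define $\bar q(\beta)$ so that $p'\rest\beta$ forces $\bar q(\beta)=q'(\beta)$ while conditions incompatible with $p'\rest\beta$ force $\bar q(\beta)=q(\beta)$, and similarly for $\bar f$. This is standard and fixable, so by itself it is not fatal.

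Part (ii), however, contains a genuine error, and it sits exactly at the step you yourself flagged as the main obstacle. Your claim that a descending $\le\kappa$-sequence added by a ($\kappa^+$-cc) $\x$ ($\kappa^+$-closed) forcing ``cannot destroy the $\kappa^+$-closure that $\T^*$ enjoys in $W$'' is false as a preservation principle: Easton's lemma yields $\kappa^+$-\emph{distributivity} of the closed factor over the cc factor, never preservation of the closure of a third ground-model poset, and closure genuinely can die in a cc extension. Concretely, a single Cohen real $c$ kills the $\aleph_1$-closure of $\Add(\omega_1,1)^V$: the finite conditions $t_n=c\rest n$ all lie in $V$ and form a descending $\omega$-sequence in $V[c]$, but any lower bound lying in $V$ would put $c$ into $V$. (Your parenthetical ``the Cohen factor adds no new $\le\kappa$-sequences over the term model'' is also backwards --- Cohen forcing at $\kappa$ always adds new $\kappa$-sequences; what Easton's lemma gives is that the \emph{term} factor adds no new $\le\kappa$-sequences over the Cohen extension.) Your reduction placing the sequence $\seq{t_i}{i<\delta}$ into $W[g]$ is correct and is indeed the first move of the Cummings--Foreman proof, but the remaining step cannot be an abstract closure argument: forming the pointwise union in $W[g]$ produces a $W[g]$-object, whereas a condition of $\T^*$ must lie in $W$, and the counterexample above shows that for an abstract $\kappa^+$-directed closed poset no such $W$-condition need exist. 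What saves the actual $\T^*$ is that its coordinates are \emph{names}: since $\Add(\kappa,\beta)^V$ for $\beta>\alpha$ projects onto $\Add(\kappa,\alpha)^V$, one can build in $W$ a single mixed name for the $\beta$-th coordinate which first reads the sequence off the generic below $\alpha$ and then takes the union (respectively a greatest lower bound in the $F_\lambda(\gamma)$-coordinates, using directed closure); the resulting condition is below every $t_i$ in the quotient because the actual generic decides the actual sequence. This exploitation of the name structure of the term forcing relative to $G_{\alpha+1}$ is the missing idea, and without it your proof of (ii) does not go through.
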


At the end of the analysis, consider the quotient of $\Add(\kappa,\lambda)\x\T$ after the forcing $\R(\kappa,\lambda,V,W,F_\lambda)$. Let $G$ be $\R(\kappa,\lambda,V,W,F_\lambda)$-generic. We define
\beq 
\S=(\Add(\kappa,\lambda)\x\T)/G=\set{(p,(\emptyset,q,f))\in \Add(\kappa,\lambda)\x\T}{(p,q,f)\in G}.
\eeq

\begin{fact}
$\S$ is $\kappa$-closed, $\kappa^+$-distributive and $\lambda$-cc over $V[\R]$.
\end{fact}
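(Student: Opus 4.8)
The plan is to play off two factorizations of the product $\Add(\kappa,\lambda)\x\T$ against each other. By the definition of $\S$ as the quotient $(\Add(\kappa,\lambda)\x\T)/G$ along the projection $\pi$ of (\ref{eq:pi-M}), we have $\Add(\kappa,\lambda)\x\T\iso\R*\dot{\S}$; and since $\T$ lies in the ground model, $\Add(\kappa,\lambda)\x\T\iso\Add(\kappa,\lambda)*\check{\T}$. Combined with the recorded factorization $\R\iso\Add(\kappa,\lambda)*\dot{\D}$, where $\dot{\D}$ is forced to be $\kappa$-closed and $\kappa^+$-distributive, these identities give, after forcing with $\Add(\kappa,\lambda)$ (with generic $A$), the equivalence $\T\iso\D*\dot{\S}$ over $V[A]$, where $V[\R]=V[A][D]$ with $D$ the $\D$-generic. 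I would treat the three properties separately, obtaining the chain condition and the distributivity softly from forcing-invariance and the iteration lemmas of Section \ref{Sec:FP}, but proving the closure by hand, since $\kappa$-closure is not invariant under forcing equivalence.

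For the $\lambda$-cc: both $\Add(\kappa,\lambda)$ and $\T$ are $\lambda$-Knaster (a $\Delta$-system argument, as $\lambda$ is inaccessible and the conditions are small), so $\Add(\kappa,\lambda)\x\T$ is $\lambda$-Knaster by Lemma \ref{lm:product}(i), in particular $\lambda$-cc. As the $\lambda$-cc is invariant under forcing equivalence and $\R$ is $\lambda$-cc, Lemma \ref{L:iterace_property}(iii) applied to $\R*\dot{\S}\iso\Add(\kappa,\lambda)\x\T$ shows that $\R$ forces $\dot{\S}$ to be $\lambda$-cc. For the $\kappa^+$-distributivity: in $V[A]$ the term forcing $\T$ is $\kappa^+$-distributive by Easton's Lemma \ref{L:Easton}(i) (here $\Add(\kappa,\lambda)$ is $\kappa^+$-cc and $\T$ is $\kappa^+$-closed); as $\kappa^+$-distributivity is invariant, $\D*\dot{\S}$ is $\kappa^+$-distributive over $V[A]$, and Lemma \ref{L:iterace_property}(ii) then yields that $\D$ forces $\dot{\S}$ to be $\kappa^+$-distributive, i.e.\ $\S$ is $\kappa^+$-distributive over $V[\R]$.

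For the $\kappa$-closure I would argue directly with the concrete poset $\S\sub\Add(\kappa,\lambda)\x\T$. Let $\seq{(p_i,(\emptyset,q_i,f_i))}{i<\delta}$ with $\delta<\kappa$ be decreasing in $\S$, so $r_i:=(p_i,q_i,f_i)\in G$ for all $i$. Using that $\R$ is $\kappa$-distributive I would first pull the whole sequence back into the ground model $V$. There I would form the candidate lower bound $r^*=(p^*,q^*,f^*)$, where $p^*=\bigcup_i p_i$ and $(\emptyset,q^*,f^*)$ is the greatest lower bound of the $t_i=(\emptyset,q_i,f_i)$ in $\T$, computed coordinatewise by $q^*(\alpha)=\bigcup_i q_i(\alpha)$ and $f^*(\alpha)=\bigcup_i f_i(\alpha)$; these are legitimate $\T$-conditions because $\T$ is $\kappa^+$-closed (the values are forced by $1$ to form decreasing sequences, resp.\ directed sets in the $\alpha$-directed closed $F_\lambda(\alpha)$). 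A routine verification gives $r^*\leq_{\R} r_i$ for every $i$, the point being that the clause ``forced by $1$'' in the $\T$-ordering upgrades to the clause ``forced by $p^*\rest\alpha$'' in the $\R$-ordering.

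The one delicate step, and the main obstacle, is to show that $r^*$ itself lies in $G$; granting this, $(p^*,(\emptyset,q^*,f^*))$ is the required lower bound in $\S$. I would prove that every $s\in G$ is compatible with $r^*$, so that $G$ cannot meet the dense set $\set{s\in\R}{s\leq r^*\text{ or }s\perp r^*}$ in its second half and hence $r^*\in G$. To see the compatibility, fix $s\in G$ and choose for each $i<\delta$ a common refinement $w_i\in G$ of $s$ and $r_i$, and put $\bar p=\bigcup_{i<\delta}\rho(w_i)$. Because $A=\rho[G]$ is generic for the $\kappa$-closed forcing $\Add(\kappa,\lambda)$, it is closed under unions of fewer than $\kappa$ of its elements, so $\bar p\in A$; and below $\bar p\rest\alpha$ the name $q_s(\alpha)$ is compatible with each $q_i(\alpha)$ (witnessed by $w_i$), hence with their union $q^*(\alpha)$, and similarly in the $f$-coordinate, producing a common refinement of $s$ and $r^*$. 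The essential difficulty here is exactly the mismatch between the demanding $\T$-ordering (quantified over $1$) and the $\R$-ordering (quantified over $p\rest\alpha$): it blocks any purely forcing-invariant derivation of closure and is overcome by routing the fewer-than-$\kappa$ compatibility witnesses $w_i$ through the union-closure of the Cohen generic $A$ together with the $\kappa^+$-directed closure of the term forcing $\T$.
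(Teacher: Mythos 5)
Your treatment of the $\lambda$-cc and of the $\kappa^+$-distributivity is correct and is essentially the intended soft analysis (the paper itself offers no proof of this Fact, deferring to Cummings--Foreman): the two factorizations of $\Add(\kappa,\lambda)^V\x\T$ really do cohere, because $\rho\circ\pi$ is exactly the product projection onto the Cohen coordinate, so over $V[A]$ one has $\T\iso\D*\dot{\S}$, and Easton's lemma (Lemma \ref{L:Easton}) together with the invariance of the chain condition and of distributivity under forcing equivalence does the rest. Two small repairs: for the Knasterness of $\T$ the $\Delta$-system on domains must be supplemented by stabilizing the (boundedly many, by inaccessibility of $\lambda$) names on the root, as in the proof that $\R$ is $\lambda$-Knaster; and in the $f$-coordinate the ordering of $\T$ is forced by $(\emptyset,\restr{q}{\alpha},\restr{f}{\alpha})$, not by $1$, so lower bounds in $\T$ should be obtained from its $\kappa^+$-directed closure rather than as literal unions $f^*(\alpha)=\bigcup_i f_i(\alpha)$.

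The closure argument, however, has a genuine gap, and it is not the one you flagged. The Fact lives in the two-model setting $V\sub W$: the Cohen factor is $\Add(\kappa,\lambda)^V$, which is only assumed $\kappa^+$-cc and $\kappa$-distributive in $W$, and in the applications that matter (Fact \ref{F:iterace}(vii), where $V$ is $V[\R_{n-1}]$ and $W$ is $V[\R_n]$) it is provably \emph{not} $\kappa$-closed in $W$ --- the paper stresses precisely this point in Section \ref{sec:outline}. Consequently the $\kappa$-distributivity of $\R$ pulls your decreasing sequence only into $W$, not into $V$; the candidate $p^*=\bigcup_i p_i$ then need not be a condition at all, since a union of $V$-conditions along a $W$-sequence need not lie in $V$ (take $x\in W\setminus V$ a subset of some $\delta<\kappa$ all of whose initial segments lie in $V$, and let $p_i$ code the characteristic function of $x\cap i$: each $p_i\in\Add(\kappa,\lambda)^V$, but any condition extending $\bigcup_i p_i$ would compute $x$); and the step ``$A=\rho[G]$ is generic for the $\kappa$-closed forcing $\Add(\kappa,\lambda)$, hence closed under unions of fewer than $\kappa$ of its elements'' fails for the same two reasons, so $\bar p=\bigcup_i\rho(w_i)$ is equally illegitimate. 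What rescues the statement is genericity of $G$ over $W$ --- the model that already contains the pulled-back sequence --- used to \emph{find} a $V$-condition rather than to \emph{build} one: choose $s\in G$ forcing the name of the sequence to equal a fixed $W$-sequence $\seq{(p_i,t_i)}{i<\delta}$; then $s$ forces each $\check{r}_i\in\dot{G}$, so every extension of $s$ is compatible with every $r_i$, and projecting to the Cohen coordinate this yields $p_i\sub\rho(s)$ for every $i$ (otherwise one could flip a value of $\rho(s)$ at a point of $\dom{p_i}$, staying inside $V$, and produce an extension of $s$ incompatible with $r_i$); thus $\rho(s)\in V$ is automatically below all the $p_i$, and one concludes by amalgamating below $s$, via a density argument, with a lower bound of the $t_i$ supplied by the $\kappa^+$-closure of $\T$. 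In the degenerate case $V=W$ (the first stage $\Q_0$) your proof is essentially correct after the minor repairs above, but as written it does not establish the Fact in the generality in which the paper uses it.
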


The following lemma summarises properties which are preserved after forcing with a product of a Mitchell-style forcing and another forcing.

\begin{lemma}\label{TPd}
Let $V\sub W$ be two inner models of $\ZFC$ with the same cardinals, $\kappa$ be a regular cardinal and $\lambda>\kappa$ supercompact in $W$. Suppose that $\Add(\kappa,\lambda)^V$ is $\kappa^+$-Knaster and $\kappa$-distributive in $W$. Assume $P$ is $\kappa^+$-Knaster, $R$ is $\kappa^+$-cc and $Q$ and $S$ are $\kappa^+$-closed in $W$. Then the following hold:
\bce[(i)]
\item $R \x \R(\kappa,\lambda,V,W,F_\lambda)$ forces that $Q$ is $\kappa^+$-distributive.
\item $Q \x \R(\kappa,\lambda,V,W,F_\lambda)$ forces that $R$ is $\kappa^+$-cc.
\item $P \x \R(\kappa,\lambda,V,W,F_\lambda)$ forces that $R$ is $\kappa^+$-cc.
\item $Q \x \R(\kappa,\lambda,V,W,F_\lambda)$ forces that $S$ is $\kappa^+$-distributive.
\ece
\end{lemma}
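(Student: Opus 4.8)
The four items split into the two distributivity claims (i), (iv) and the two chain-condition claims (ii), (iii), and I would treat each group by a uniform method built on the two projections produced by the product analysis: the projection $\rho:\R(\kappa,\lambda,V,W,F_\lambda)\to\Add(\kappa,\lambda)^V$ and the projection $\pi:\Add(\kappa,\lambda)^V\x\T\then\R(\kappa,\lambda,V,W,F_\lambda)$ of (\ref{eq:pi-M}). Writing $\R$ for $\R(\kappa,\lambda,V,W,F_\lambda)$, these give in any generic extension the chain $W[\Add(\kappa,\lambda)^V]\sub W[\R]\sub W[\Add(\kappa,\lambda)^V\x\T]$, and the same inclusions persist after multiplying all three forcings by a common factor $E$, since $\mathrm{id}_E\x\rho$ and $\mathrm{id}_E\x\pi$ remain projections. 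I would fix one generic for the largest product in sight and read off every intermediate generic through these projections, so that all the displayed model inclusions hold simultaneously. I also recall that $\Add(\kappa,\lambda)^V$ is $\kappa^+$-Knaster and $\T$ is $\kappa^+$-closed in $W$, that products of $\kappa^+$-Knaster (resp.\ $\kappa^+$-closed) forcings are again such by Lemmas \ref{lm:product}(i) and \ref{L:iterace_property}(i), and that a $\kappa^+$-Knaster forcing times a $\kappa^+$-cc forcing is $\kappa^+$-cc by Lemma \ref{lm:product}(ii).

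For the distributivity claims (i) and (iv) the plan is a sandwich argument. Consider (i). Put $C=R\x\Add(\kappa,\lambda)^V$, which is $\kappa^+$-cc (Knaster times cc), and $D=Q\x\T$, which is $\kappa^+$-closed. By Easton's Lemma \ref{L:Easton}(i), $C$ forces $D$ to be $\kappa^+$-distributive, so $W[C]$ and $W[C\x D]$ contain exactly the same $<\kappa^+$-sequences of ordinals. Both $W[R\x\R]$ and $W[Q\x R\x\R]$ lie between these two models: using $\rho$ and $\pi$ one gets $W[C]=W[R\x\Add(\kappa,\lambda)^V]\sub W[R\x\R]\sub W[Q\x R\x\R]\sub W[R\x\Add(\kappa,\lambda)^V\x Q\x\T]=W[C\x D]$. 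Being sandwiched between two models with the same short sequences of ordinals, each of $W[R\x\R]$ and $W[Q\x R\x\R]$ has precisely the short sequences of ordinals of $W[C]$; in particular they agree with one another, which is exactly the assertion that $Q$ is $\kappa^+$-distributive over $W[R\x\R]$. Item (iv) is identical, taking $C=\Add(\kappa,\lambda)^V$ and $D=S\x Q\x\T$, with test forcing $S$ over the ground model $W[Q\x\R]$.

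For the chain-condition claims (ii) and (iii) the naive approach of forcing the two factors in some order breaks down, and this is the main difficulty: a $\kappa^+$-closed forcing need not preserve the Knaster property of $P\x\Add(\kappa,\lambda)^V$, a $\kappa^+$-cc forcing need not preserve the closure of $\T$, and a product of two $\kappa^+$-cc forcings need not be $\kappa^+$-cc, so the pieces cannot simply be recombined in either order. The device I would use is to bundle $R$ into the cc part \emph{before} meeting the honestly closed term forcing $\T$. Take (iii). In $W$ the forcing $P\x\Add(\kappa,\lambda)^V\x R$ is $\kappa^+$-cc (a $\kappa^+$-Knaster forcing times the $\kappa^+$-cc forcing $R$), while $\T$ is $\kappa^+$-closed, so by Easton's Lemma \ref{L:Easton}(ii) $\T$ forces $P\x\Add(\kappa,\lambda)^V\x R$ to be $\kappa^+$-cc. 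Working now in $W[\T]$ and viewing this product as $(P\x\Add(\kappa,\lambda)^V)*\check R$, Lemma \ref{L:iterace_property}(iii) lets me peel $R$ back off: since the two-step forcing is $\kappa^+$-cc, $P\x\Add(\kappa,\lambda)^V$ forces over $W[\T]$ that $R$ is $\kappa^+$-cc. Hence $R$ is $\kappa^+$-cc in $W[\T][P\x\Add(\kappa,\lambda)^V]=W[P\x\Add(\kappa,\lambda)^V\x\T]$.

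It then remains to push this down to $W[P\x\R]$, which sits inside $W[P\x\Add(\kappa,\lambda)^V\x\T]$ via the projection $\mathrm{id}_P\x\pi$. Here I would use that $\kappa^+$ is a cardinal in $W[P\x\Add(\kappa,\lambda)^V\x\T]$ (force the $\kappa^+$-closed $\T$ first, then the forcing $P\x\Add(\kappa,\lambda)^V$, which is $\kappa^+$-cc there by Easton's Lemma \ref{L:Easton}(ii)), together with the absoluteness of antichains of $R$: as $R\in W$, compatibility of two conditions of $R$ is computed identically in any inner model containing $R$, so a set is an antichain of $R$ in $W[P\x\R]$ exactly when it is one in $W[P\x\Add(\kappa,\lambda)^V\x\T]$. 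If $R$ had an antichain of size $\kappa^+$ in $W[P\x\R]$, the witnessing injection from $\kappa^+$ would persist to the larger model and produce there an antichain of size at least $\kappa^+$, contradicting the previous paragraph; thus $R$ is $\kappa^+$-cc in $W[P\x\R]$. Item (ii) runs the same way with $P$ replaced by the $\kappa^+$-closed $Q$: bundle as $\Add(\kappa,\lambda)^V\x R$ (still $\kappa^+$-cc), apply Easton's Lemma \ref{L:Easton}(ii) against the $\kappa^+$-closed forcing $Q\x\T$, peel $R$ off by Lemma \ref{L:iterace_property}(iii), and reflect the antichain down the projection $\mathrm{id}_Q\x\pi$ as above.
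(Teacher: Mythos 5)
Your proof is correct and takes essentially the same route as the paper: project onto the product $\Add(\kappa,\lambda)^V\x\T$ (times the auxiliary factors), apply Easton's lemma there, and pull the property back down -- the paper proves (i) in exactly this way via the quotient of $R\x Q\x\Add(\kappa,\lambda)^V\x\T$ over $R\x Q\x\R(\kappa,\lambda,V,W,F_\lambda)$, and handles (ii)--(iv) by the same bundling of the cc factor with the Knaster part that you spell out. One cosmetic repair: rather than ``fixing one generic for the largest product and reading off the intermediate generics,'' you should (as the paper does) start from an arbitrary generic for the intermediate forcing and extend it to a generic for the full product by forcing with the quotient, so that the forced statement is verified for every generic; this is standard and not a gap.
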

\begin{proof}
(i).\ It is easy to check that the projection $\pi$ in (\ref{eq:pi-M}) extends to the projection $\pi'$, \beq \label{pi'} \pi': R \x Q \x \Add(\kappa,\lambda) \x \T \to R \x Q \x \R(\kappa,\lambda,V,W,F_\lambda),\eeq which sends $(r_1, r_2, p, (\emptyset, q,f))$ to $(r_1, r_2, (p,q,f))$.
It follows that $R \x Q \x \Add(\kappa,\lambda)^V \x \T$ is forcing equivalent to \beq \label{S}[R \x Q \x \R(\kappa,\lambda,V,W,F_\lambda)] * \dot{S}\eeq for some quotient forcing $\dot{S}$.

Let $G \x g \x F$ be an arbitrary $R \x \R(\kappa,\lambda,V,W,F_\lambda)\x Q$-generic filter over $W$. We will show that every sequence $x$ of ordinals of length less than $\kappa^+$ which is in $V[G\x g \x F]$ is in $V[G \x g]$ which shows that $Q$ is forced to be $\kappa^+$-distributive as required.
Let $x$ as above be fixed. Let $h$ be any $\dot{S}$-generic filter over $V[G \x g \x F]$. It follows by (\ref{S}) that $V[G \x g \x F][h]$ can be written as $V[G \x g_0 \x g_1 \x F]$ where $g_0 \x g_1$ is $\Add(\kappa,\lambda)^V \x \T$-generic, and the following hold:
\bce[(i)]
\item $V[G \x g \x F] \sub V[G \x g_0 \x g_1 \x F]$,
\item $V[G \x g_0] \sub V[G \x g]$,
\ece
where (ii) holds because $g_0$ is the Cohen part of $g$. In particular $x$ is in $V[G \x g_0 \x g_1 \x F]$.

By Easton's lemma, $R \x \Add(\kappa,\lambda)^V$ (which is $\kappa^+$-cc) forces that $\T\x Q$ (which is $\kappa^+$-closed) is $\kappa^+$-distributive. It follows that $x$ is already in $V[G \x g_0]$, and hence in $V[G \x g]$ as desired.

(ii) -- (iv).\ It suffices to argue similarly as in (i) that the forcing notion under consideration has the required property in the generic extension by $Q \x \Add(\kappa,\lambda)^V \x \T$ for (ii) and (iv), and $P \x \Add(\kappa,\lambda)^V \x \T$ for (iii). This is easy to show using the Easton's lemma (Lemma \ref{L:Easton}).
\end{proof}

\subsection{The Cummings-Foreman model}

Let $\kappa_2<\kappa_3<\dots$ be an $\omega$-sequence of supercompact cardinals with limit $\lambda$ and let $\kappa_0$ denote $\aleph_0$ and $\kappa_1$ denote $\aleph_1$. And let $F_n$ denote corresponding Laver function for $\kappa_n$ for $n>1$. Now we define Cummings-Foreman forcing used in \cite{CUMFOR:tp} to force the tree property below $\aleph_\omega$. We also state some basic facts about this forcing which can be found in \cite{CUMFOR:tp}.

\begin{definition}
The iteration $\R_\omega=\seq{\R_n*\dot{\Q}_n}{n<\omega}$ of length $\omega$ is defined by induction as follows:
\bce[(i)]
\item The first stage $\Q_0=\R(\kappa_0,\kappa_2,V,V, F_2)$, let us denote $\R_1=\Q_0$ and $\R_0$ be the trivial forcing.
\item Suppose that we have defined the iteration up to stage $n>0$. Let $\R_n=\Q_0*\dots *\dot{\Q}_{n-1}$. First define an $\R_n$-name $\dot{F}_{n+2}$ by $\dot{F}_{n+2}(\alpha)=F_{n+2}(\alpha)$, if $F_{n+2}(\alpha)$ is an $\R_n$-name, and $\dot{F}_{n+2}(\alpha)=0$ otherwise. Then define $\dot{\Q}_n$ to be a name for $\R(\kappa_n,\kappa_{n+2},V[\R_{n-1}],V[\R_n], F^*_{n+2})$, where $F^*_{n+2}$ is the interpretation of $\dot{F}_{n+2}$ in $V[\R_n]$.
\ece
Let $\R_\omega$ denote the inverse limit of $\seq{\R_n}{n<\omega}$.
\end{definition}

Let us for $n<\omega$ fix the following notation corresponding to the analysis in the previous section. Let $\T_n$, $\D_n$ and $\S_n$ be the relevant partial orders and $\pi_n$, $\rho_n$ and $\sigma^{\kappa_{n+2},\alpha}$ the projections, where $\alpha$ is an ordinal between $\kappa_n$ and $\kappa_{n+2}$.

For the proofs of the following facts see corresponding lemmas in \cite{CUMFOR:tp} (Lemma 4.2, Lemma 4.3 and Lemma 4.4).

\begin{fact}
Let $\P_0$ denote $\Add(\kappa_0,\kappa_2)$ and $\T_0$ denote the term forcing of by $\Q_0=\R(\kappa_0,\kappa_2,V,V,F_2)$. Then the following hold:
\bce[(i)]
\item The size of $\Q_0$ is $\kappa_2$ and $\Q_0$ is $\kappa_2$-Knaster.
\item $\pi_0$ is a projection from $\P_0\times \T_0$ to $\Q_0$ and $\rho_0$ is a projection from $\Q_0$ to $\P_0$.
\item $\Q_0$ forces $2^{\aleph_0}=\kappa_2=\aleph_2$.
\item $\Add(\kappa_1,\xi)^V$ is $\kappa_1$-distributive and $\kappa_2$-Knaster after forcing with $\Q_0$ for a suitable ordinal $\xi>0$.
\item $\dot{\D}_0$, given by the projection $\rho_0$, is a $\P_0$-name for $\kappa_1$-distributive and $\kappa_2$-cc forcing.
\item $\dot{\S}_0$, given by the projection $\pi_0$ is a $\Q_0$-name for $\kappa_1$-distributive and $\kappa_2$-cc forcing.
\ece
\end{fact}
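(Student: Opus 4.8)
The plan is to obtain every clause as an instance of the general analysis of $\R(\kappa,\lambda,V,W,F_\lambda)$ recalled in Section \ref{S:Mitchell}, specialized to $\kappa=\kappa_0=\aleph_0$, $\lambda=\kappa_2$ (supercompact, hence inaccessible) and $V=W$. The hypotheses of that analysis are satisfied here: $\kappa_0^{<\kappa_0}=\kappa_0$, and $\Add(\kappa_0,\kappa_2)^V=\Add(\aleph_0,\kappa_2)$ is $\kappa_1$-Knaster by the $\Delta$-system lemma (a sunflower of finite conditions with a common stem and fixed values on the stem is pairwise compatible) and trivially $\kappa_0$-distributive, so Lemma \ref{TPd} applies with these parameters.

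Clauses (ii), (iii) and (vi) are then immediate. Clause (ii) is just the projections $\pi$ and $\rho$ of (\ref{eq:pi-M}) for the present parameters. Clause (iii) follows from the general statement that $\R(\kappa,\lambda,V,W,F_\lambda)$ collapses the interval $(\kappa^+,\lambda)$ to $\kappa^+$, preserves $\kappa^+$, and forces $2^\kappa=\lambda=\kappa^{++}$: with $\kappa=\aleph_0$ this gives $2^{\aleph_0}=\kappa_2$, and since $\aleph_1$ is preserved while $(\aleph_1,\kappa_2)$ is collapsed, $\kappa_2$ becomes $\aleph_2$. Clause (vi) is exactly the Fact recalled above that $\S$ is $\kappa$-closed, $\kappa^+$-distributive and $\lambda$-cc over $V[\R]$, read with $\kappa=\kappa_0$ and $\lambda=\kappa_2$. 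For the $\kappa_2$-Knaster part of (i) I would again cite that $\R$ is $\lambda$-Knaster; the bound $|\Q_0|=\kappa_2$ I would get by a direct count using the inaccessibility of $\kappa_2$: there are $\kappa_2$ finite conditions $p$, and each $q$ (size $\le\kappa_0$) and each $f$ (size $<\kappa_1$) takes values among nice names for conditions in forcings of size $<\kappa_2$, so, since $\kappa_2^{\aleph_0}=\kappa_2$, the numbers of possible $q$'s and $f$'s are each $\kappa_2$.

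The one clause that needs genuine work is (iv). For the Knaster part, note first that $\Add(\kappa_1,\xi)^V$ has conditions of size $<\kappa_1$ and that $\kappa_2$ is inaccessible (so $\mu^{<\kappa_1}<\kappa_2$ for all $\mu<\kappa_2$); a $\Delta$-system argument then shows $\Add(\kappa_1,\xi)^V$ is $\kappa_2$-Knaster in $V$. Since $\Q_0$ is $\kappa_2$-Knaster, hence $\kappa_2$-cc, by (i), Lemma \ref{L:K-cc}(ii) yields that $\Q_0$ forces $\Add(\kappa_1,\xi)^V$ to be $\kappa_2$-Knaster. For the $\kappa_1$-distributivity I would apply Lemma \ref{TPd}(i) with $R$ trivial and $Q=\Add(\kappa_1,\xi)^V$ (which is $\kappa_0^+=\kappa_1$-closed): this gives directly that $\Q_0=\R(\kappa_0,\kappa_2,V,V,F_2)$ forces $\Add(\kappa_1,\xi)^V$ to be $\kappa_1$-distributive. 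The content behind Lemma \ref{TPd}(i) --- and the step I expect to be the real obstacle --- is that the new $<\kappa_1$-sequence one wants to capture is added over $V[\Q_0]$, where no useful closure is available, so one must route it through the product projection $\pi_0$: since $\P_0$ is $\kappa_1$-cc and $\T_0\x\Add(\kappa_1,\xi)^V$ is $\kappa_1$-closed, Easton's lemma (Lemma \ref{L:Easton}) makes the latter $\kappa_1$-distributive over $V[\P_0]$, and the sequence, living in a further $\P_0\x\T_0$-extension, must then already lie in $V[\P_0]\sub V[\Q_0]$.

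Finally, clause (v): the $\kappa_1$-distributivity of $\dot{\D}_0$ is the general statement that the quotient $\dot{\D}$ obtained from the projection $\rho$ is forced $\kappa^+$-distributive (and $\kappa$-closed), here with $\kappa^+=\kappa_1$; and since $\Q_0\cong\P_0*\dot{\D}_0$ is $\kappa_2$-cc by (i) while $\P_0$ is $\kappa_2$-cc, Lemma \ref{L:iterace_property}(iii) forces $\dot{\D}_0$ to be $\kappa_2$-cc over $V[\P_0]$. Thus the only nonroutine input is the distributivity preservation in (iv), which is precisely where the term-forcing factorization $\pi_0$ and the $\kappa_1$-closure of $\T_0$ are indispensable.
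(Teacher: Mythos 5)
Your proof is correct, and it is essentially the paper's own argument: the paper establishes this Fact simply by citing Lemmas 4.2--4.4 of Cummings--Foreman, and your specialization of the general analysis of $\R(\kappa,\lambda,V,W,F_\lambda)$ recalled in Section \ref{S:Mitchell} --- the $\lambda$-Knaster property, the projections $\pi$ and $\rho$, the term-forcing factorization plus Easton's lemma for the distributivity in (iv) and (v), and Lemma \ref{L:K-cc}(ii) with a $\Delta$-system argument for the Knaster part of (iv) --- is exactly what those lemmas do. The only point worth flagging is cosmetic: for $|\Q_0|=\kappa_2$ one must, as is standard, fix canonical (nice) names for the values $q(\alpha)$ and $f(\alpha)$, since otherwise the class of admissible names is unbounded; granting that convention, your count via the inaccessibility of $\kappa_2$ is the intended one.
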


\begin{fact}\label{F:iterace}
Let $n>0$ and let us denote by $\P_n=\Add(\kappa_n,\kappa_{n+2})^{V[\R_{n-1}]}$ and $\T_n$ the term forcing of $\Q_n=(\kappa_n,\kappa_{n+2},V[\R_{n-1}],V[\R_n],F^*_{n+2})$. Then in $V[\R_n]$ the following hold:
\bce[(i)]
\item $2^{\kappa_i}=\kappa_{i+2}$ for $i<n$ and $\kappa_i=\aleph_i$ for $i<n+2$.
\item The size of $\Q_n$ is $\kappa_{n+2}$ and $\Q_n$ is $\kappa_{n-1}$-closed, $\kappa_n$-distributive and $\kappa_{n+2}$-Knaster.
\item $\Q_n$ is a projection of $\P_n\times\T_n$ and there is also projection from $\Q_n$ to $\P_n$.
\item $\Q_n$ forces $2^{\kappa_n}=\kappa_{n+2}=\aleph_{n+2}$.
\item $\Add(\kappa_{n+1},\xi)^{V[\R_n]}$ is $\kappa_{n+1}$-distributive and $\kappa_{n+2}$-Knaster after forcing with $\Q_n$ a suitable ordinal $\xi>0$.
\item $\dot{\D}_n$, given by the projection $\rho_n$, is a $\P_n$-name for $\kappa_n$-closed, $\kappa_{n+1}$-distributive and $\kappa_{n+2}$-cc forcing.
\item $\dot{\S}_n$, given by the projection $\pi_n$ is a $\Q_n$-name for $\kappa_n$-closed, $\kappa_{n+1}$-distributive and $\kappa_{n+2}$-cc forcing.
\ece
\end{fact}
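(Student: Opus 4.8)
The plan is to prove (i)--(vii) simultaneously by induction on $n$, using the preceding (unlabelled) Fact concerning $\Q_0$ for the base case $n=1$, and, in the inductive step, reading off from items (i)--(vii) for $n-1$ all the data about $V[\R_{n-1}]$ and about $V[\R_n]=V[\R_{n-1}][\Q_{n-1}]$. The engine is the package of general properties of the Mitchell-style forcing $\R(\kappa,\lambda,V,W,F_\lambda)$ assembled in Section \ref{S:Mitchell} ($\lambda$-Knaster and $\kappa$-distributive, collapsing $(\kappa^+,\lambda)$ to $\kappa^+$, forcing $2^\kappa=\lambda=\kappa^{++}$, the projections $\pi,\rho$, the term forcing $\T$, and the quotient analysis producing $\D$ and $\S$), instantiated throughout with $\kappa=\kappa_n$, $\lambda=\kappa_{n+2}$, $V=V[\R_{n-1}]$ and $W=V[\R_n]$. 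Granting the standing hypothesis of those facts (discussed below), the arithmetic (i) is immediate from (i) and (iv) for $n-1$ together with the collapsing behaviour of $\Q_{n-1}=\R(\kappa_{n-1},\kappa_{n+1},\dots)$, which collapses $(\kappa_n,\kappa_{n+1})$ to $\kappa_n$ and forces $2^{\kappa_{n-1}}=\kappa_{n+1}$, so that in $V[\R_n]$ one gets $\kappa_i=\aleph_i$ for $i<n+2$ and $2^{\kappa_i}=\kappa_{i+2}$ for $i<n$ (the lower values $2^{\kappa_i}=\kappa_{i+2}$, $i<n-1$, being undisturbed since $\Q_{n-1}$ is $\kappa_{n-1}$-distributive). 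The distributive, Knaster, and size assertions in (ii)--(iv) are then essentially direct readings of the general facts, once the hypothesis is known.

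That standing hypothesis -- that the ``displaced'' Cohen forcing $\P_n=\Add(\kappa_n,\kappa_{n+2})^{V[\R_{n-1}]}$ is $\kappa_n^+$-cc and $\kappa_n$-distributive \emph{as computed in} $V[\R_n]$ -- is the real content, and it is exactly why the construction draws the Cohen part from $V[\R_{n-1}]$ rather than from $V[\R_n]$. In $V[\R_{n-1}]$ one still has $2^{\kappa_{n-1}}=\kappa_n$, hence $\kappa_n^{<\kappa_n}=\kappa_n$, so a $\Delta$-system argument (using the inaccessibility of $\kappa_{n+1}$) shows $\P_n$ is $\kappa_{n+1}$-Knaster there, and it is plainly $\kappa_n$-closed. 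The difficulty is that $\Q_{n-1}$ collapses every cardinal of $(\kappa_n,\kappa_{n+1})$, so nothing about $\P_n$ survives for free. For the chain condition I note that $\Q_{n-1}$ is $\kappa_{n+1}$-Knaster (by (ii) for $n-1$) and apply Lemma \ref{L:K-cc} to conclude that $\Q_{n-1}$ forces $\P_n$ to remain $\kappa_{n+1}$-Knaster; since $\kappa_{n+1}=(\kappa_n^+)^{V[\R_n]}$, this is precisely the required condition.

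The distributivity is the main obstacle: $\P_n$ is only $\kappa_n$-closed while $\Q_{n-1}$ is nowhere near $\kappa_n$-cc, so Easton's lemma (Lemma \ref{L:Easton}) does not apply to the pair $\Q_{n-1},\P_n$ directly. I would instead exploit the product decomposition of $\Q_{n-1}$ recalled in Section \ref{S:Mitchell}, which places $V[\R_n]$ inside $V[\R_{n-1}][g_{n-1}][t_{n-1}]=V[\R_n][S_{n-1}]$, where $g_{n-1}$ is $\P_{n-1}$-generic, $t_{n-1}$ is $\T_{n-1}$-generic, and $S_{n-1}$ is $\S_{n-1}$-generic. In $V[\R_{n-1}]$ the product $\T_{n-1}\times\P_n$ is $\kappa_n$-closed (both factors are) while $\P_{n-1}$ is $\kappa_n$-cc (by (v) for $n-2$, or the $\Q_0$-Fact when $n=1$); so Easton's lemma followed by Lemma \ref{L:iterace_property}(ii) gives that $\P_n$ is $\kappa_n$-distributive over $V[\R_n][S_{n-1}]$. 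To descend to $V[\R_n]$ I take a short sequence $x$ of ordinals in a $\P_n$-generic extension of $V[\R_n]$, adjoin an $\S_{n-1}$-generic $S_{n-1}$ by a mutual-genericity argument, conclude $x\in V[\R_n][S_{n-1}]$ from the distributivity just obtained, and finally use that $\S_{n-1}$ is $\kappa_n$-distributive over $V[\R_n]$ (by (vii) for $n-1$) to land $x\in V[\R_n]$; this is precisely the pattern of Lemma \ref{TPd}.

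With the standing hypothesis secured, the remaining clauses follow. The extra $\kappa_{n-1}$-closure of $\Q_n$ in (ii), beyond the bare $\kappa_n$-distributivity coming from the general facts, is seen by taking a decreasing sequence of conditions of length $<\kappa_{n-1}$: its sequence of Cohen parts lies in $V[\R_{n-1}]$ by the $\kappa_{n-1}$-distributivity of $\Q_{n-1}$, so their union is a legitimate condition of $\P_n$, while a lower bound for the term parts is furnished by the $\kappa_n^+$-closure of $\T_n$ in $V[\R_n]$. Clause (v), which prepares the next stage, follows by applying Lemma \ref{TPd}(iv) (with the trivial forcing in the leftmost slot) to see that $\Q_n$ forces the $\kappa_{n+1}$-closed forcing $\Add(\kappa_{n+1},\xi)^{V[\R_n]}$ to be $\kappa_{n+1}$-distributive, together with Lemma \ref{L:K-cc} for its $\kappa_{n+2}$-Knaster property. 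Finally (vi) and (vii) come from the general quotient analysis of $\D$ and $\S$ (yielding the stated $\kappa_n$-closure and $\kappa_{n+1}$-distributivity), upgraded to the $\kappa_{n+2}$-cc by Lemma \ref{L:iterace_property}(iii), using that $\R(\kappa_n,\kappa_{n+2},\dots)$ is $\kappa_{n+2}$-Knaster and $\P_n$ is $\kappa_{n+2}$-cc.
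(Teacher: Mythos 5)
Your proposal is correct and follows essentially the same route as the paper, which gives no in-text proof but defers to Lemmas 4.2--4.4 of Cummings--Foreman \cite{CUMFOR:tp}: a simultaneous induction on $n$ in which the key point is exactly the one you isolate, namely that $\P_n=\Add(\kappa_n,\kappa_{n+2})^{V[\R_{n-1}]}$ remains $\kappa_{n+1}$-Knaster in $V[\R_n]$ via Lemma \ref{L:K-cc} and $\kappa_n$-distributive via the product decomposition $\P_{n-1}\times\T_{n-1}$ with Easton's lemma and a descent through the quotient $\S_{n-1}$ (the pattern of Lemma \ref{TPd}). Your treatment of the remaining clauses (the $\kappa_{n-1}$-closure of $\Q_n$, clause (v) via Lemma \ref{TPd}(iv) and Lemma \ref{L:K-cc}, and the upgrade of $\dot{\D}_n$, $\dot{\S}_n$ to $\kappa_{n+2}$-cc via the two-step chain-condition equivalence of Lemma \ref{L:iterace_property}(iii)) likewise matches the cited arguments.
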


\begin{fact}
Let $n\ge 0$. Any $\kappa_n$-sequence of ordinals in $V[\R_\omega]$ is already added by $\R_n*\dot{\P}_n$.
\end{fact}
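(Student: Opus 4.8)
The plan is to factor the inverse limit as $\R_\omega = (\R_n * \dot{\P}_n) * \dot{W}$ and to show that the quotient $\dot{W}$ is $\kappa_{n+1}$-distributive over $V[\R_n * \dot{\P}_n]$. Since $\kappa_n < \kappa_{n+1}$, a $\kappa_{n+1}$-distributive forcing adds no new sequence of ordinals of length $<\kappa_{n+1}$, and in particular no new sequence of length $\kappa_n$; this is exactly what the statement asserts (the reverse inclusion being trivial, as $\R_n$ is an initial segment and $\P_n$ is a projection of $\Q_n$ via $\rho_n$, so $\R_n*\dot{\P}_n$ embeds completely into $\R_\omega$). I would break $\dot{W}$ into its two natural pieces, the tail of the iteration and the non-Cohen part $\dot{\D}_n$ of the last Mitchell block, and handle each separately at the single cardinal $\kappa_{n+1}$.

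First I would treat the tail. Write $\R_\omega = \R_{n+1} * \dot{\R}^{\mathrm{tail}}$, where $\R_{n+1} = \R_n * \dot{\Q}_n$ and $\dot{\R}^{\mathrm{tail}}$ is the inverse-limit iteration $\dot{\Q}_{n+1} * (\dot{\Q}_{n+2} * \dot{\Q}_{n+3} * \cdots)$ (this is the quotient $\R_\omega/\R_{n+1}$, since $\R_\omega$ is the inverse limit of $\seq{\R_k}{k<\omega}$ and $\R_{n+1}$ is an initial segment). By Fact \ref{F:iterace}(ii), $\Q_{n+1}$ is $\kappa_{n+1}$-distributive in $V[\R_{n+1}]$, while for every $m \ge n+2$ the iterand $\Q_m$ is $\kappa_{m-1}$-closed and hence $\kappa_{n+1}$-closed; as an inverse limit of $\omega$-many $\kappa_{n+1}$-closed forcings (with $\kappa_{n+1} > \aleph_0$), the upper tail $\dot{\Q}_{n+2} * \cdots$ is again $\kappa_{n+1}$-closed, so $\kappa_{n+1}$-distributive. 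Lemma \ref{L:iterace_property}(ii) then gives that $\dot{\R}^{\mathrm{tail}}$ is $\kappa_{n+1}$-distributive over $V[\R_{n+1}]$, whence every $\kappa_n$-sequence of ordinals in $V[\R_\omega]$ already lies in $V[\R_{n+1}]$.

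Next I would treat the last Mitchell block. In $V[\R_n]$ the projection $\rho_n$ witnesses that $\Q_n$ is forcing-equivalent to $\P_n * \dot{\D}_n$, and by Fact \ref{F:iterace}(vi) (together with the corresponding fact for the case $n=0$) the quotient $\dot{\D}_n$ is $\kappa_{n+1}$-distributive over $V[\R_n * \dot{\P}_n]$. Thus $V[\R_{n+1}] = V[\R_n * \dot{\P}_n][\D_n]$ and $\dot{\D}_n$ adds no new sequence of ordinals of length $<\kappa_{n+1}$; in particular every $\kappa_n$-sequence of ordinals in $V[\R_{n+1}]$ already belongs to $V[\R_n * \dot{\P}_n]$. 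Composing this with the previous step yields that every $\kappa_n$-sequence of ordinals in $V[\R_\omega]$ lies in $V[\R_n * \dot{\P}_n]$, as required.

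The point that requires care is keeping the distributivity bookkeeping at the \emph{single} cardinal $\kappa_{n+1}$. The first iterand of the tail, $\Q_{n+1}$, is only $\kappa_n$-closed, so one cannot exclude sequences of length exactly $\kappa_n$ by invoking closure of the whole tail; the crucial input is that $\Q_{n+1}$ is $\kappa_{n+1}$-\emph{distributive} (not merely $\kappa_n$-distributive), which is precisely strong enough to kill new $\kappa_n$-sequences, while everything strictly above it is genuinely $\kappa_{n+1}$-closed. Recognizing that both pieces of $\dot{W}$ are $\kappa_{n+1}$-distributive — rather than mixing closure at one level with distributivity at another — is what lets the whole argument run cleanly through Lemma \ref{L:iterace_property}(ii).
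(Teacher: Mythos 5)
Your proof is correct and is essentially the intended argument: for this fact the paper does not give its own proof but cites Cummings--Foreman (their Lemma 4.4), and your factoring of $\R_\omega$ as $(\R_n*\dot{\P}_n)*\dot{\D}_n*\dot{\R}_{[n+1,\omega)}$, with $\dot{\D}_n$ being $\kappa_{n+1}$-distributive by Fact \ref{F:iterace}(vi) (resp.\ the $n=0$ analogue), $\Q_{n+1}$ being $\kappa_{n+1}$-distributive and the tail above $\R_{n+2}$ being $\kappa_{n+1}$-closed by Fact \ref{F:iterace}(ii), composed via Lemma \ref{L:iterace_property}(ii), is exactly that argument. The same decomposition, with the additional Easton-lemma bookkeeping needed to absorb the product $\E$, is what the paper carries out in its generalization, Lemma \ref{L:sequence}.
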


\begin{theorem}\emph{(Cummings-Foreman)}
In the generic extension by $\R_\omega$ the following hold:
\bce[(i)]
\item $2^{\kappa_n}=\kappa_
{n+2}$ and $\kappa_n=\aleph_n$, for $n<\omega$,
\item the tree property at $\kappa_n$, for $1<n<\omega$.
\ece 
\end{theorem}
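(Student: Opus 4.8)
The plan is to establish the two clauses separately: clause (i) about the continuum function and cardinals by a ``capturing'' argument that localizes $P(\kappa_n)$ to a bounded stage of the iteration, and clause (ii) about the tree property by a supercompactness lifting argument run at each $\kappa_{m+2}$. For (i), fix $n<\omega$. First I would show that $V[\R_\omega]$, $V[\R_{n+1}]$ and $V[\R_n*\dot\P_n]$ all have the same subsets of $\kappa_n$: the inclusions $V[\R_n*\dot\P_n]\sub V[\R_{n+1}]\sub V[\R_\omega]$ hold because $\Q_n$ projects onto $\P_n$ (Fact \ref{F:iterace}(iii)), while the fact that every $\kappa_n$-sequence of ordinals of $V[\R_\omega]$ already lies in $V[\R_n*\dot\P_n]$ gives the reverse containment of power sets, whence equality. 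Reading Fact \ref{F:iterace}(i) at the index $n+1$, in $V[\R_{n+1}]$ we have $\kappa_i=\aleph_i$ for $i<n+3$ and $2^{\kappa_i}=\kappa_{i+2}$ for $i<n+1$; in particular $\kappa_n=\aleph_n$ and $2^{\kappa_n}=\kappa_{n+2}=\aleph_{n+2}$. It then remains to check that the tail $\R_\omega/\R_{n+1}$ preserves cardinals $\le\kappa_{n+2}$: by Fact \ref{F:iterace}(ii) each factor $\Q_k$ ($k\ge n+1$) is highly closed at the bottom and collapses only cardinals in $(\kappa_{k+1},\kappa_{k+2})$, all of which exceed $\kappa_{n+2}$, and the closure of the tail prevents new collapses at limit stages. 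Since $P(\kappa_n)$ is unchanged and $\kappa_{n+2}$ is preserved, $2^{\kappa_n}=\kappa_{n+2}=\aleph_{n+2}$ holds in $V[\R_\omega]$, as required.

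For (ii), fix $m\ge0$ and put $\kappa=\kappa_{m+2}$; the tree property at $\kappa$ is the one forced by the Mitchell-style forcing $\Q_m=\R(\kappa_m,\kappa_{m+2},V[\R_{m-1}],V[\R_m],F^*_{m+2})$ of length $\kappa$. Let $T$ be a $\kappa$-tree in $V[\R_\omega]$. Coding $T$ and any potential cofinal branch as $\kappa$-sequences of ordinals and invoking the capturing fact at the index $m+2$, it suffices to produce a cofinal branch inside $W:=V[\R_{m+2}*\dot\P_{m+2}]=V[\R_m][\Q_m][\Q_{m+1}][\P_{m+2}]$. I would run the Mitchell argument over $V$: since $\kappa$ is supercompact with Laver function $F_{m+2}$ and $|\R_m|=\kappa_{m+1}<\kappa$, fix a supercompactness embedding $j\colon V\to M$ with $\mathrm{crit}(j)=\kappa$ and $M^\mu\sub M$ for $\mu>|T|$, arranged through the Laver property so that $j(F_{m+2})(\kappa)$ names the forcing needed to continue. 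As $j$ is the identity below $\kappa$ it lifts trivially through the small forcing $\R_m$; using the Laver coordinate of the Mitchell forcing together with the closure of $M$ one builds an $M$-generic for $j(\Q_m)/G_m$ and a master condition, and lifts $j$ through the whole of $\R_m*\dot\Q_m*\dot\Q_{m+1}*\dot\P_{m+2}$. In the target model $j(T)$ has height $j(\kappa)>\kappa$, so its node at level $\kappa$ determines a cofinal branch $b$ of $j(T)\rest\kappa=T$.

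It then remains to pull $b$ back into $W$, i.e.\ to show the quotient forcing from $W$ up to the lifted model adds no cofinal branch to $T$. Using the term-forcing and Cohen analysis of Section \ref{S:Mitchell}, I would present this quotient as (a projection of) a product of a cc part and a closed part: the cc part is the $\kappa_m$-Cohen forcing $\Add(\kappa_m,\cdot)$ arising from the upper part of $j(\Q_m)$, whose square is $\kappa_{m+1}$-cc and hence $\kappa$-cc; the closed part consists of the associated term forcings together with the higher Cohen forcings $\Add(\kappa_{m+1},\cdot)$ (the Cohen part of $\Q_{m+1}$) and $\P_{m+2}=\Add(\kappa_{m+2},\cdot)$, each of which is at least $\kappa_{m+1}$-closed, i.e.\ $\kappa_m^+$-closed. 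The cc part adds no cofinal branch to the $\kappa$-tree $T$ by Fact \ref{F:Square_cc}; the closed part adds none by Fact \ref{F:ccc_Closed} applied with parameter $\kappa_m$, the decisive point being that $2^{\kappa_m}=\kappa_{m+2}=\kappa$ after $\Q_m$, so the hypothesis $2^{\kappa_m}\ge\kappa$ is satisfied. Lemma \ref{TPd} is exactly what keeps these chain-condition, closure and distributivity properties intact across the Mitchell forcing $\Q_m$ (and the side factors), so that the two no-new-branch facts apply in the correct intermediate models, the closed part being treated over the extension by the cc part as in Fact \ref{F:ccc_Closed}. Concluding $b\in W\sub V[\R_\omega]$ gives the cofinal branch, and hence $\TP(\kappa)$.

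The main obstacle is this pull-back: one must present the quotient between $W$ and the lifted model as a product (or projection of one) of a forcing whose square is $\kappa$-cc and a $\kappa_m^+$-closed forcing, applied in the right order, and verify the numerical hypothesis $2^{\kappa_m}=\kappa$ so that Facts \ref{F:Square_cc} and \ref{F:ccc_Closed} genuinely bite; preserving the relevant Knaster/cc and closure properties across $\Q_m$ is precisely the role of Lemma \ref{TPd}. The remaining delicate ingredient is the lift itself, namely building the $M$-generic for $j(\Q_m)/G_m$ and the master condition from the Laver coordinate and the $\mu$-closure of $M$, so that $j$ extends all the way to $W$ and the node at level $\kappa$ of $j(T)$ is available.
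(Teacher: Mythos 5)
Your proposal is correct and takes essentially the same approach as the paper: the paper states this theorem without proof, citing Cummings--Foreman, and your outline --- capturing $P(\kappa_n)$ via the fact that $\kappa_n$-sequences of $V[\R_\omega]$ are added by $\R_n*\dot{\P}_n$, then lifting a Laver-anticipated supercompact embedding through $\R_m*\dot{\Q}_m*\dot{\Q}_{m+1}*\dot{\P}_{m+2}$ and excluding new branches in the quotient by splitting it into a square-$\kappa$-cc Cohen part (Fact \ref{F:Square_cc}) and a $\kappa_{m+1}$-closed term/Cohen part (Fact \ref{F:ccc_Closed}, with the decisive hypothesis $2^{\kappa_m}=\kappa$), with Lemma \ref{TPd} preserving the relevant properties --- is precisely the Cummings--Foreman argument, which the paper itself reprises in generalized form (with the extra product $\E$) in Section \ref{sec:main}. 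The only gaps are the standard details you explicitly flag (constructing the master condition and the quotient generics), so nothing essential is missing.
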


\section{Main theorem}\label{sec:main}

Let $\kappa_2<\kappa_3<\dots$ be an $\omega$-sequence of supercompact cardinals with limit $\lambda$ and let $\kappa_0$ denote $\aleph_0$ and $\kappa_1$ denote $\aleph_1$. In Theorem \ref{Th:main}, we control the continuum function below $\aleph_\omega=\lambda$, while having the tree property at all $\aleph_n$, $n>1$.

Let $A$ denote the set $\set{\kappa_i}{i<\omega}$, and let $e: A \to A$ be a function which satisfies for all $\alpha,\beta$ in $A$:
\bce[(i)]
\item $i<j<\omega \to e(\kappa_i)\le e(\kappa_j)$.
\item $e(\kappa_i)\ge\kappa_{i+2}$ for all $i<\omega$.
\ece
We say that $e$ is an Easton function on $A$ which respects the $\kappa_i$'s (condition (ii)).

\begin{theorem}\label{Th:main}
Assume $\GCH$ and let $\seq{\kappa_i}{i<\omega}$, $\lambda$, and $A$ be as above. Let $e$ be an Easton function on $A$ which respects the $\kappa_i$'s. Then there is a forcing notion $\bb{Z}$ such that if $G$ is a $\Z$-generic filter, then in $V[G]$:
\bce[(i)]
\item Cardinals in $A$ are preserved, and all other cardinals below $\lambda$ are collapsed; in particular, for all $n<\omega$, $\kappa_n = \aleph_{n}$,
\item The continuum function on $A = \set{\aleph_n}{n<\omega}$ is controlled by $e$,i.e.\ $\forall n<\omega, 2^{\aleph_n}=e(\aleph_n)$.
\item The tree property holds at every $\aleph_n$, $2 \le n < \omega$.
\ece
\end{theorem}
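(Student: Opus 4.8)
The plan is to realize the forcing anticipated in~(\ref{R_omega}), namely $\bb{Z} = \R_\omega * \dot{\E}$, where $\R_\omega$ is the Cummings--Foreman iteration and $\dot{\E}$ is the full-support product $\prod_{n<\omega}\dot{\E}_n$ whose factor $\dot{\E}_n$ is (a name for) the Cohen forcing $\Add(\kappa_n, e(\kappa_n))$ \emph{computed in the inner model $V[\R_{n-1}]$} --- the same model from which the Cohen part $\P_n = \Add(\kappa_n,\kappa_{n+2})^{V[\R_{n-1}]}$ of the Mitchell-style $\Q_n$ is drawn. The point of this choice is that in $V[\R_{n-1}]$ one has $\kappa_n = \aleph_n$ and $2^{\kappa_{n-1}} = \kappa_n$, hence $\kappa_n^{<\kappa_n} = \kappa_n$, so that $\dot{\E}_n$ is $\kappa_n$-closed and $\kappa_n^+$-Knaster there; read off instead in $V[\R_\omega]$, where $2^{\kappa_{n-1}} = \kappa_{n+1} > \kappa_n$, the same forcing would collapse cardinals above $\kappa_n$. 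Condition (ii) on $e$ guarantees that $\dot{\E}_n$ merely pushes the value $2^{\kappa_n} = \kappa_{n+2}$ already forced by $\R_\omega$ up to $e(\kappa_n)$, and condition (i) keeps the resulting continuum function monotone, hence consistent.

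For clauses (i) and (ii) I would run an Easton-style product analysis at each $\kappa_n$. Writing $\dot{\E} \cong \dot{\E}_{<n} \x \dot{\E}_{\ge n}$, the tail $\dot{\E}_{\ge n}$ is a full-support product of forcings that are $\kappa_n$-closed in their home models and the head $\dot{\E}_{<n}$ is a finite product of $\kappa_n$-cc forcings; using Easton's lemma (Lemma~\ref{L:Easton}) together with the term-forcing analysis behind Lemma~\ref{TPd} one checks that neither collapses $\kappa_n$ nor adds bounded subsets of it, so every cardinal in $A$ is preserved. Since $\R_\omega$ already collapses exactly the cardinals needed to make $\kappa_n = \aleph_n$, this yields (i). For (ii), a nice-name count (using $\GCH$ in the ground) bounds $2^{\kappa_n} \le e(\kappa_n)$ in $V[G]$, while $\dot{\E}_n$ manifestly adjoins $e(\kappa_n)$ subsets of $\kappa_n$; monotonicity of $e$ is what prevents a later factor $\dot{\E}_m$, $m>n$, from inadvertently driving $2^{\kappa_n}$ above $e(\kappa_n)$.

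Clause (iii) is the crux, and for each $\kappa_{n+2}$, $n \ge 0$, I would re-run the Cummings--Foreman argument for $\bb{Z}$ itself, since no black-box preservation result applies --- mere chain condition or closure of $\dot{\E}$ cannot preserve the tree property (Cohen forcing may add Aronszajn trees), so the supercompactness of $\kappa_{n+2}$ must be used directly. Fixing a supercompactness embedding $j : V \to M$ with critical point $\kappa_{n+2}$ and using the Laver function $F_{n+2}$ exactly as in $\Q_n = \R(\kappa_n,\kappa_{n+2},V[\R_{n-1}],V[\R_n],F^*_{n+2})$, one arranges that $j$ applied to the part of $\bb{Z}$ up to $\kappa_{n+2}$ --- which now also absorbs the Cohen factors $\dot{\E}_m$ with $\kappa_m \le \kappa_{n+2}$ --- factors as that part followed by a tail over which $j$ can be lifted; for any $\kappa_{n+2}$-tree $T$ in $V[G]$ the lift then produces a cofinal branch of $T$ in the target model. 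It remains to descend this branch to $V[G]$, i.e.\ to show that the forcing from $V[G]$ up to the target adds no new cofinal branch to $T$: its $\kappa_{n+2}$-Knaster part (the low Cohen factors $\dot{\E}_m$, $\kappa_m<\kappa_{n+2}$) adds none by Fact~\ref{F:Knaster} (or Fact~\ref{F:Square_cc} on its square), and its $\kappa_{n+2}$-closed part (the tail of $\R_\omega$ together with the high factors $\dot{\E}_m$, $\kappa_m\ge\kappa_{n+2}$) adds none by Fact~\ref{F:Closed}/\ref{F:ccc_Closed}, using $2^{\kappa_{n+1}}\ge\kappa_{n+2}$; the chain conditions and distributivity needed to apply these facts over the Mitchell extension come from Lemma~\ref{TPd} and Easton's lemma.

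The main obstacle is exactly this descent, specifically the distributivity of the closed part. The literal $\kappa_{n+2}$-closure of each Cohen factor $\dot{\E}_m$ is destroyed the moment it is viewed over $V[\R_\omega]$, because an earlier stage such as $\Q_{n+1}$ adjoins $\kappa_{n+1}$-sequences, hence new bounded subsets of $\kappa_{n+2}$; so Fact~\ref{F:Closed} cannot be applied with ground model $V[\R_\omega]$. This is precisely why Lemma~\ref{TPd} is phrased for a product with the Mitchell forcing taken between two inner models $V \sub W$: the closure bookkeeping is done in the home model $V[\R_{m-1}]$ of each factor, where closure genuinely holds (and where the term-forcing analysis replaces a closure-deficient Cohen factor by a genuinely closed term forcing times a cc forcing), and the conclusion is transported across the intervening cc forcing by Easton's lemma. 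A secondary subtlety is that an arbitrary $\kappa_{n+2}$-tree of $V[G]$ need not already lie in $V[\R_\omega]$, so one must use the distributivity of the closed part to locate the tree, and any candidate branch, inside a suitable intermediate model before the Knaster/closed factorization can be applied. Once these points are in place, the combination of the branch-free Knaster part, the branch-free closed part, and the branch supplied by the supercompact lift yields the tree property at $\kappa_{n+2}$ in $V[G]$ for every $n$, which is clause (iii).
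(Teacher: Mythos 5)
Your proposal is correct and follows essentially the same route as the paper: the identical forcing $\Z=\R_\omega*\dot{\E}$ with each Cohen factor computed in its home model $V[\R_{n-1}]$, clauses (i)--(ii) via an Easton/term-forcing analysis done in the home models (the paper's Lemmas \ref{L:R_n}--\ref{L:sequence}), a reduction locating the tree in an intermediate extension with Cohen lengths trimmed to $\kappa$ (Lemma \ref{C:small}), and clause (iii) by re-running the Cummings--Foreman lift with branch descent through Facts \ref{F:Square_cc} and \ref{F:ccc_Closed}. The one load-bearing detail your sketch leaves implicit is made explicit in the paper: the ``absorption'' of the critical-point factor $\P^\E_{n+2}$ is achieved by modifying the Laver anticipation so that $j(F_{n+2})(\kappa_{n+2})$ names $\T_{n+1}\times\P^\E_{n+2}$ (not just the term forcing as in Cummings--Foreman), which puts the generic $x_{n+2}$ inside $h_n$ and makes the master condition in $j(\T_{n+1})\times j(\P^\E_{n+2})$ available via $j(\kappa)$-directed closure; also, in the closed-part descent the correct parameters are $\kappa_{n+1}$-closure with $2^{\kappa_n}\ge\kappa_{n+2}$ (applying Fact \ref{F:ccc_Closed} with $\kappa=\kappa_n$, $\lambda=\kappa_{n+2}$), rather than $\kappa_{n+2}$-closure with $2^{\kappa_{n+1}}\ge\kappa_{n+2}$.
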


For obtaining the model we are using the Cummings-Foreman iteration from \cite{CUMFOR:tp} followed by the Easton product of Cohen forcings which live in suitable inner models.

\subsection{The forcing}
Let $e$ be an Easton function on $A$ which respect the $\kappa_n$'s and let $\R_\omega$ be the forcing from Cummings and Foreman. Our forcing $\bb{Z}$ is defined as follows:

\beq
\bb{Z} = \R_\omega * \prod_{n<\omega} \Add(\kappa_n,e(\kappa_n))^{V[\R_{n-1}]},
\eeq
where we identify $V[\R_{-1}]$ (for $n = 0$) with $V$.

Let us denote this product by $\E$ and let $\dot{\E}$ be a canonical $\R_\omega$-name for it. We can therefore write \beq \bb{Z} = \R_\omega*\dot{\E}.\eeq Now we need to verify that the tree property holds in this model below $\aleph_\omega$ and that the continuum function is represented by $e$.

\subsection{The right continuum function}\label{sub:contf}

In this section, we show that $\bb{Z}$ forces the right continuum function:

\begin{theorem}\label{th:contf}
$\R_\omega*\dot{\E}$ forces that for all $n<\omega$, $\kappa_n=\aleph_n$ and $2^{\kappa_n}=e(\kappa_n)$.
\end{theorem}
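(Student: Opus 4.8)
The plan is to build on the Cummings--Foreman analysis of $\R_\omega$ and to treat the product $\E$ one cardinal at a time. Fix $n<\omega$. Since in $V[\R_\omega]$ we already have $\kappa_n=\aleph_n$, $2^{\kappa_n}=\kappa_{n+2}$ and the intended cardinal structure below $\lambda$, it suffices to show that $\E$ preserves $\kappa_{n+1}=\kappa_n^+$ and forces $2^{\kappa_n}=e(\kappa_n)$. I split the full-support product as $\E=\E_{\le n}\times\E_{>n}$, where $\E_{\le n}=\prod_{m\le n}\Add(\kappa_m,e(\kappa_m))^{V[\R_{m-1}]}$ gathers the Cohen forcings acting at or below $\kappa_n$ and $\E_{>n}=\prod_{m>n}\Add(\kappa_m,e(\kappa_m))^{V[\R_{m-1}]}$ those acting strictly above. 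The crucial bookkeeping point is that $\E_{\le n}\in V[\R_{n-1}]\subseteq V[\R_{n+1}]$, so writing $\R_\omega=\R_{n+1}*\dot\R^{\mathrm{tail}}$ and using mutual genericity I may commute $\E_{\le n}$ past the tail and arrange $V[\R_\omega][\E]=V[L][H]$, where $L=\R_{n+1}*\E_{\le n}$ and $H=\R^{\mathrm{tail}}*\E_{>n}$. The whole statement then reduces to: (a) $L$ has size $e(\kappa_n)$, captures all subsets of $\kappa_n$, and forces $2^{\kappa_n}=e(\kappa_n)$ while preserving $\kappa_{n+1}$; and (b) $H$ is $\kappa_{n+1}$-distributive over $V[L]$.

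Claim (a) gives both bounds on $2^{\kappa_n}$. For the lower bound, the single factor $\E_n=\Add(\kappa_n,e(\kappa_n))^{V[\R_{n-1}]}$ of $L$ adds $e(\kappa_n)$ pairwise distinct subsets of $\kappa_n$, so once $\kappa_n$ and $e(\kappa_n)\in A$ are known to be preserved we obtain $2^{\kappa_n}\ge e(\kappa_n)$. For the upper bound I compute $|L|=|\R_{n+1}|\cdot|\E_{\le n}|=\kappa_{n+2}\cdot\max_{m\le n}e(\kappa_m)=e(\kappa_n)$, using monotonicity of $e$ and $e(\kappa_n)\ge\kappa_{n+2}$. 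The chain condition of $\E_{\le n}$ in the model $V[\R_{n+1}]$ (where the relevant instances of $\GCH$ still hold) comes from Fact~\ref{F:iterace}(v) together with a $\Delta$-system argument and Lemma~\ref{lm:product}, giving that $L$ is $\kappa_{n+2}$-cc; counting nice $L$-names for subsets of $\kappa_n$ then bounds their number by $e(\kappa_n)^{\kappa_n}=e(\kappa_n)$, since $e(\kappa_n)=\kappa_j$ is regular above $\kappa_n$. In particular $L$ preserves $\kappa_{n+1}$ (its only intended collapses, coming from $\R_{n+1}$, are in the open interval $(\kappa_{n+1},\kappa_{n+2})$).

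Claim (b) is the heart of the matter and the step I expect to be the main obstacle; it is exactly the distributivity alluded to in the footnote to option (ii) of the outline. It guarantees that $H$ adds no new subsets of $\kappa_n$ over $V[L]$, whence $\power{\kappa_n}^{V[\R_\omega][\E]}=\power{\kappa_n}^{V[L]}$ and the count of Claim (a) is final, and it preserves $\kappa_{n+1}$. The difficulty is that $H$ is not $\kappa_{n+1}$-closed in any single inner model: the Mitchell stages of $\R^{\mathrm{tail}}$ collapse cardinals, and the factors of $\E_{>n}$ are closed only in the increasing models $V[\R_{m-1}]$ ($m>n$), the least closed being $\Add(\kappa_{n+1},\cdot)^{V[\R_n]}$, which is merely $\kappa_{n+1}$-closed and lives in $V[\R_n]$. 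My plan is to exploit that the extra Cohen forcing $\E_m$ was deliberately taken from the same model $V[\R_{m-1}]$ as the Cohen part $\P_m$ of the Mitchell forcing $\Q_m=\R(\kappa_m,\kappa_{m+2},V[\R_{m-1}],V[\R_m],F^*_{m+2})$, so that $\P_m\times\E_m$ is again a single wider Cohen forcing $\Add(\kappa_m,e(\kappa_m))^{V[\R_{m-1}]}$ over $V[\R_{m-1}]$. Thus the product/term-forcing analysis of $\R^{\mathrm{tail}}$ carries over verbatim with widened Cohen parts: $H$ is a projection of a $\kappa_{n+1}$-cc (Knaster) Cohen-type forcing times a term forcing that is $\kappa_{n+1}$-closed in the relevant submodel, and Easton's Lemma~\ref{L:Easton} applied there yields the distributivity, exactly as in the proof of Lemma~\ref{TPd}(i),(iv), assembled across the finitely many relevant levels $n,n+1,\dots$ by Lemma~\ref{L:iterace_property}(ii). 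Keeping track of which part of $H$ is closed over which intermediate model, while the Mitchell forcings below collapse cardinals, is where the real work lies.

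Finally I assemble the pieces. Combining the two bounds from Claim~(a) with the rigidity of $\power{\kappa_n}$ from Claim~(b) gives $2^{\kappa_n}=e(\kappa_n)$ in $V[\R_\omega][\E]$. Preservation of $\kappa_n$ and $\kappa_{n+1}$ for every $n$ -- from the chain condition of $L$ and the $\kappa_{n+1}$-distributivity of $H$ -- together with the fact that every cardinal below $\lambda$ outside $A$ was already collapsed by $\R_\omega$ and that $\lambda=\sup_n\kappa_n$ survives as a (limit) cardinal, shows that the cardinals of $A$ are exactly the cardinals of $V[\R_\omega][\E]$ below $\lambda$. Hence $\kappa_n=\aleph_n$ for all $n$, which together with the continuum computation establishes both conclusions of the theorem.
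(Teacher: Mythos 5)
Your overall architecture is the same as the paper's: split $\Z$ as an initial part $L=\R_{n+1}*\E_{\le n}$ that captures all subsets of $\kappa_n$ and a tail $H$ shown to be $\kappa_{n+1}$-distributive by merging each $\P_m^\E$ with the Cohen part $\P_m$ of the Mitchell forcing into one wide Cohen forcing from $V[\R_{m-1}]$ and applying the term-forcing projection plus Easton's Lemma (this is exactly the mechanism of Lemma \ref{TPd} and the proof of Lemma \ref{L:sequence}, using Corollary \ref{Cor:cc}). However, there is a genuine gap in your Claim (a). You dispose of $L$ with a $\Delta$-system argument giving $\kappa_{n+2}$-cc and a nice-name count, and you assert only parenthetically that $L$ preserves $\kappa_{n+1}$. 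But $\kappa_{n+2}$-cc protects nothing below $\kappa_{n+2}$, and the preservation of the cardinals $\kappa_i$ for $i\le n+1$ under $\E_{\le n}$ over $V[\R_{n+1}]$ is precisely the hard part of the theorem: the factor $\P_i^\E$ comes from $V[\R_{i-1}]$, and in $V[\R_{n+1}]$, where $2^{\kappa_{i-1}}\ge\kappa_{i+1}>\kappa_i$, it is no longer $\kappa_i$-closed, nor obviously $\kappa_i$-distributive -- this is the very obstacle flagged in the footnote in the introduction, so ``its only intended collapses come from $\R_{n+1}$'' begs the question. The paper devotes Lemma \ref{L:R_n} to exactly this: a descending induction establishing simultaneously that $\E_{[i,n+1)}$ is $\kappa_{i-1}$-closed in $V[\R_n]$, and that $\P_i^\E$ is $\kappa_i$-distributive and $\kappa_{i+1}$-cc over $V[\R_n][\E_{(i,n+1)}]$, via the factorization of $\Q_{i-1}*\dot{\Q}_i$ as $(\Q_{i-1}\times\P_i)*\dot{\D}_i$ together with Lemma \ref{TPd}(ii),(iv) and Lemma \ref{L:Easton}. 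In other words, the term-forcing/Easton machinery you correctly identify as the heart of Claim (b) is equally indispensable \emph{inside} $L$, interleaved with the Mitchell stages of $\R_{n+1}$; without it your lower bound $2^{\kappa_n}\ge e(\kappa_n)$ is not even well-posed, since it presupposes that $\kappa_n$ survives.

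Two smaller corrections. First, under $\kappa_{n+2}$-cc the antichains in a nice name have size $\le\kappa_{n+1}$, so the count is $e(\kappa_n)^{\kappa_{n+1}}$ rather than $e(\kappa_n)^{\kappa_n}$; this still equals $e(\kappa_n)$ computed in $V$ under \GCH, because $e(\kappa_n)=\kappa_j$ is regular and $>\kappa_{n+1}$, but the exponent matters and should be stated. Second, your merged-Cohen description of the tail should be formulated as the paper does -- keeping $\P_m^\E$ as an outside product factor and observing that $\Q_m\times\P_m^\E$ is a projection of $(\P_m\times\P_m^\E)\times\T_m$, where $\P_m\times\P_m^\E$ is a single Cohen forcing over $V[\R_{m-1}]$ -- rather than literally widening the first coordinate inside the Mitchell forcing, whose second coordinate is tied to the restricted Cohen parts $\Add(\kappa_m,\alpha)$. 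With Claim (a) replaced by the Lemma \ref{L:R_n} induction, your assembly of the two claims reproduces the paper's proof of Theorem \ref{th:contf} from Lemma \ref{L:sequence}, Lemma \ref{L:R_n}, and the isomorphism $\P_n\times\E_{n+1}\iso\E_{n+1}$ over $V[\R_n]$.
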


We prove the theorem in a series of lemmas. Before we begin with the analysis of the forcing $\R_{\omega}*\dot{\E}$, let us fix some notation. For $n<\omega$ let $\dot{\R}_{[n,\omega)}$ denote the canonical $\R_n$-name for the tail $\R_{[n,\omega)}$ of the iteration $\R_\omega$. If $i<n$ let as also denote $\dot{\R}_{[i,n)}$ the canonical $\R_i$-name for the iteration between $i$ and $n$, $\R_{[i,n)}$.

In $V[\R_\omega]$, let us denote by $\P_n^\E$ the Cohen forcing $\Add(\kappa_n,e(\kappa_n))^{V[\R_{n-1}]}$ in the product $\E$, $n<\omega$. Moreover, let us denote by $\E_n$ the product of first $n$-many Cohen forcings in $\E$, i.e.\ $\E_n=\prod_{i<n} \P^\E_i$ and analogously let $\E_{[n,\omega)}$ denote the product of the rest of the forcing, i.e.\ $\E_{[n,\omega)}=\prod_{i\ge n}\Add(\kappa_i,e(\kappa_i))^{V[\R_{i-1}]}$; we have $\E \iso \E_n\times \E_{[n,\omega)}$. Let us further define $\E_{(j,n)}=\prod_{j<i<n} \Add(\kappa_i,e(\kappa_i))^{V[\R_{i-1}]}$ for $j\le n$ and let $\dot{\E}_n$, $\dot{\E}_{[n,\omega)}$ and $\dot{\E}_{(j,n)}$ denote the canonical $\R_\omega$-name for $\E_n$, $\E_{[n,\omega)}$ and $\E_{(j,n)}$-name, respectively.

It is easy to see that for all $n<\omega$, $\dot{\E}_{n+2}$ can be identified with an $\R_n$-name as all Cohen forcings in $\dot{\E}_{n+2}$ live in $V[\R_n]$. Therefore we can factor the iteration as $\R_\omega*\dot{\E}=\R_n*(\dot{\E}_{n+2}\times \dot{\R}_{[n,\omega)})*\dot{\E}_{[n+2,\omega)}$ for each $n<\omega$.

\begin{lemma}\label{L:R_n}
Let $n>0$. Then in $V[\R_n*\dot{\E}_{n+1}]$, the following hold:
\bce[(i)]
\item $\aleph_i=\kappa_i$ for $i<n+2$;
\item $2^{\kappa_i}=e(\kappa_i)$ for $i<n+1$.
\ece
\end{lemma}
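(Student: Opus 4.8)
The plan is to prove Lemma~\ref{L:R_n} by induction on $n>0$, keeping track of the two claims jointly. The key structural fact I would exploit is the factorization noted just before the statement: for each $n$, the Cohen forcings in $\dot\E_{n+2}$ live in $V[\R_n]$, so $\dot\E_{n+1}$ (being a sub-product of $\dot\E_{n+2}$) can be identified with an $\R_n$-name. Thus $V[\R_n*\dot\E_{n+1}]$ makes sense as a single extension by $\R_n$ followed by the product of the first $n+1$ Cohen forcings $\P^\E_0,\dots,\P^\E_n$, where $\P^\E_i=\Add(\kappa_i,e(\kappa_i))^{V[\R_{i-1}]}$.

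\textbf{Cardinals are preserved (claim (i)).} First I would recall from Fact~\ref{F:iterace}(i) that in $V[\R_n]$ we already have $\kappa_i=\aleph_i$ for $i<n+2$. So it suffices to show that the extra product $\E_{n+1}=\prod_{i<n+1}\P^\E_i$ does not collapse any $\kappa_i$ for $i<n+2$. The natural tool is a chain-condition/closure split of the product. For the top factor $\P^\E_n=\Add(\kappa_n,e(\kappa_n))^{V[\R_{n-1}]}$: since $\kappa_n^{<\kappa_n}=\kappa_n$ in $V[\R_{n-1}]$ under GCH-type bookkeeping, this Cohen forcing is $\kappa_n^+$-cc (indeed $\kappa_n^+$-Knaster) and $\kappa_n$-closed in its own model; Lemma~\ref{L:Easton} and Lemma~\ref{TPd} then let me transfer these properties into $V[\R_n]$ so that $\P^\E_n$ preserves $\kappa_n$ and $\kappa_{n+1}=\kappa_n^+$. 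The lower factors $\P^\E_i$ for $i<n$ should be absorbed by writing $\E_{n+1}\cong \E_n\times\P^\E_n$ and handling $\E_n$ via the inductive hypothesis applied one level down, together with the observation that the tail factors are highly closed relative to the earlier cardinals (an Easton's-lemma argument, Lemma~\ref{L:Easton} again).

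\textbf{Computing the powersets (claim (ii)).} For $i<n+1$ I need $2^{\kappa_i}=e(\kappa_i)$ after forcing with $\R_n*\dot\E_{n+1}$. The lower bound $2^{\kappa_i}\ge e(\kappa_i)$ is immediate: $\P^\E_i=\Add(\kappa_i,e(\kappa_i))^{V[\R_{i-1}]}$ adds $e(\kappa_i)$-many subsets of $\kappa_i$, and since cardinals are preserved these remain distinct. For the upper bound I would count nice names: each factor $\P^\E_i$ has size at most $e(\kappa_i)$, the forcing $\R_n$ has size $\kappa_{n+1}\le e(\kappa_i)$ once $i$ is large enough (and is handled by Fact~\ref{F:iterace}(i) giving $2^{\kappa_i}=\kappa_{i+2}$ in $V[\R_n]$ for $i<n$), and the factors $\P^\E_j$ with $i<j\le n$ are $\kappa_j$-closed hence $\kappa_i^+$-closed, so by Lemma~\ref{L:Easton}(i) (or the distributivity conclusions of Lemma~\ref{TPd}) they add no new subsets of $\kappa_i$ and therefore do not change $2^{\kappa_i}$. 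Combining, the subsets of $\kappa_i$ in the final model are exactly those added by $\R_n*\dot\E_{i+1}$ (the relevant initial segment), and a nice-name count there gives $2^{\kappa_i}=e(\kappa_i)$, using the monotonicity condition (i) on $e$ to ensure the arithmetic closes up.

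\textbf{Main obstacle.} The delicate point is the interaction between the Cohen factor $\P^\E_n$, which is defined over the \emph{earlier} model $V[\R_{n-1}]$ rather than over $V[\R_n]$, and the intervening forcing $\dot\R_{[n-1,n)}$; this is exactly the subtlety flagged in the introduction, namely that $\P^\E_n$ need not be $\kappa_n$-closed in $V[\R_n]$. I expect the hard part to be verifying that $\P^\E_n$ preserves $\kappa_n$ and computes $2^{\kappa_{n-1}}$ and $2^{\kappa_n}$ correctly despite this mismatch; the resolution should come from the product/term-forcing analysis of Fact~\ref{F:iterace}(iii) together with Lemma~\ref{TPd}, which were precisely engineered to let a forcing defined in $V[\R_{n-1}]$ retain its chain-condition and distributivity properties after the Mitchell-style quotient.
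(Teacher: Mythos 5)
Your toolkit is the right one (Fact \ref{F:iterace}, Lemma \ref{TPd}, Easton's lemma), and you correctly single out the central obstacle, namely that $\P^\E_i$ lives in $V[\R_{i-1}]$ and so loses its closure by the time one reaches $V[\R_n]$. But your inductive setup has the wrong shape, and this creates a genuine gap. You propose an outer induction on $n$, writing $\E_{n+1}\cong\E_n\times\P^\E_n$ and handling $\E_n$ ``via the inductive hypothesis applied one level down.'' That hypothesis is a statement about the model $V[\R_{n-1}\ast\dot{\E}_n]$, whereas what you need is the behaviour of $\E_n$ over $V[\R_n]$ (or $V[\R_n][\P^\E_n]$): the generic for $\Q_{n-1}$ intervenes between the two, and nothing in your sketch bridges them --- transferring the cardinal-preservation properties of $\E_n$ across $\Q_{n-1}$ is a problem of exactly the same difficulty as the lemma itself. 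The paper sidesteps this by fixing $n$ and running a \emph{downward} induction on the factor index $i$, from $i=n$ to $i=0$, with three hypotheses all stated over extensions of the single model $V[\R_n]$: (a) $\E_{[i,n+1)}$ is $\kappa_{i-1}$-closed in $V[\R_n]$; (b) $\P^\E_i$ is $\kappa_i$-distributive in $V[\R_n][\E_{(i,n+1)}]$; (c) $\P^\E_i$ is $\kappa_{i+1}$-cc in $V[\R_n][\E_{(i,n+1)}]$. With this ordering (later, more closed factors are forced first) no order-of-generics mismatch ever arises. The step you leave entirely unaddressed is (c), which is the heart of the proof and is not a one-shot application of your transfer lemmas: the paper starts from $\kappa_{i+1}$-Knasterness of $\P^\E_i$ and $\Q_{i-1}$ in $V[\R_{i-1}]$, rewrites $(\Q_i\times\P^\E_{i+1})\ast\dot{\Q}_{i+1}$ as $(\Q_i\times\P^\E_{i+1}\times\P_{i+1})\ast\dot{\D}_{i+1}$, and alternates Lemma \ref{TPd}(ii),(iv) with Lemma \ref{L:Easton}(ii) through several intermediate models. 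Your sketch performs the analogous verification only for the top factor $\P^\E_n$ (which is indeed the paper's base case, via Lemma \ref{L:closed-dis}(ii), Lemma \ref{TPd}(i) and Lemma \ref{L:K-cc}(i)) and asserts the lower factors are ``absorbed.''

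There is also a concrete false step in your count for (ii): you claim the factors $\P^\E_j$ with $i<j\le n$ are ``$\kappa_j$-closed hence $\kappa_i^+$-closed.'' That closure holds only in $V[\R_{j-1}]$; in $V[\R_n]$ the forcing $\P^\E_j$ is merely $\kappa_{j-1}$-closed (this is exactly the paper's (a)), so for $j=i+1$ you get only $\kappa_i$-closure, which does not by itself rule out new subsets of $\kappa_i$. What actually survives, and what the paper proves as (b), is $\kappa_j$-distributivity of $\P^\E_j$ over the relevant intermediate model, and distributivity does suffice since a subset of $\kappa_i$ is a sequence of length $\kappa_i<\kappa_j$. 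Your parenthetical appeal to ``the distributivity conclusions of Lemma \ref{TPd}'' points at the correct repair, but as stated the closure claim is wrong in every model where you invoke it, and repairing it forces you into precisely the stepwise (a)--(c) analysis that the paper carries out.
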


\begin{proof}
(i).\ Let $n>0$ be given. First recall Cummings-Foreman result that for all $1<i<n+2$, $\kappa_i=\aleph_i$ in $V[\R_n]$, $2^{\kappa_{i-2}}=\kappa_i$ and $\GCH$ holds everywhere else.

We will show by induction starting with $i = n$ and descending to $0$ that for each $0 \le i \le n$, the forcing $\E_{[i,n+1)}$ behaves well over the model $V[\R_n]$ in the sense that it does not unintentionally collapse cardinals and forces the right continuum function. The assumptions for the induction are as follows:

\medskip

\bce[(a)]
\item $\E_{[i,n+1)} = \P^\E_i \x \E_{(i,n+1)}$  is $\kappa_{i-1}$-closed in $V[\R_n]$,
\item $\P^\E_i$ is $\kappa_i$-distributive in $V[\R_n][\E_{(i,n+1)}]$,
\item $\P^\E_i$  is $\kappa_{i+1}$-cc in $V[\R_n][\E_{(i,n+1)}]$.
\ece

\medskip

Notice that if we verify (a)--(c) for each $0 \le i \le n$, then the result follows because by stage $i =0$ we have dealt with the whole forcing $\E_{[0,n+1)} = \E_{n+1}$ (items (b) and (c) imply that for each $i$, $\P^\E_i$ preserves cardinals over the model $V[\R_n][\E_{(i,n+1)}]$, with (a) being a useful assumptions which keeps the induction running).

The base case is $i =n$, which means that $\P_n^\E$ should satisfy points (a)--(c) in $V[\R_n]$. This is true by Lemma \ref{L:closed-dis}(ii), Lemma \ref{TPd}(i)(with a trivial forcing $R$) and Lemma \ref{L:K-cc}(i), respectively.

For the induction step, let us assume that (a)--(c) hold for $0 < i+1 \le n$, and we will verify (a)--(c) for $i$.

\begin{itemize}
\item[(a)] It suffices to show that $\P_i^{\E}$ is $\kappa_{i-1}$-closed in $V[\R_n][\E_{(i,n+1)}]$ because by the induction assumption (a), $\E_{(i,n+1)} = \E_{[i+1,n+1)}$ is $\kappa_i$-closed in $V[\R_n]$.
~

The forcing $\R_n$ is equal to $\R_{i-1}*\dot{\R}_{[i-1,n)}$ and $\dot{\R}_{[i-1,n)}$ is forced to be $\kappa_{i-1}$-distributive by Fact \ref{F:iterace}(ii). Therefore $\P_i^{\E}$ is $\kappa_{i-1}$-closed in $V[\R_n]$ by Lemma \ref{L:closed-dis}(ii). 

\item[(b)] We wish to show that $\P_i^{\E}$ is $\kappa_{i}$-distributive in $V[\R_n][\E_{(i,n+1)}]$. 
~

$\R_n$ can be written as 
\beq
\R_{i-1}*\dot{\Q}_{i-1}*\dot{\Q}_{i}*\dot{\R}_{[i+1,n)}.
\eeq
Working in $V[\R_{i-1}]$, $\Q_{i-1}*\dot{\Q}_{i}$ is short for $\R(\kappa_{i-1},\kappa_{i+1},V[\R_{i-2}],V[\R_{i-1}],F^*_{i+1})*\R(\kappa_{i},\kappa_{i+2},V[\R_{i-1}],V[\R_{i}],F^*_{i+2})$ and this forcing is forcing equivalent to 
\beq
(\R(\kappa_{i-1},\kappa_{i+1},V[\R_{i-2}],V[\R_{i-1}],F^*_{i+1})\times\P_i)*\dot{\D}_i
\eeq
where $\dot{\D}_i$ is forced to be $\kappa_i$-closed after $\R(\kappa_{i-1},\kappa_{i+1},V[\R_{i-2}],V[\R_{i-1}],F^*_{i+1})\times\P_i)$ by Fact \ref{F:iterace}(vi).  But $\P_i^\E$ is $\kappa_i$-distributive after the forcing $$\R(\kappa_{i-1},\kappa_{i+1},V[\R_{i-2}],V[\R_{i-1}],F^*_{i+1})\times\P_i)$$ by Lemma \ref{TPd}(iv), therefore we can apply Lemma \ref{L:closed-dis}(i) to $\dot{\D}_i$ and $\P_i^\E$ and conclude that $\P_i^\E$ is $\kappa_i$-distributive in $V[\R_{i-1}][\R(\kappa_{i-1},\kappa_{i+1},V[\R_{i-2}],V[\R_{i-1}],F^*_{i+1})\times\P_i)*\dot{\D}_i]$. The rest of the proof again follows by  Lemma \ref{L:closed-dis}(i) from Fact \ref{F:iterace}(ii) that $\R_{[i+1,n)}$ is $\kappa_i$-closed and from the induction hypothesis that $\E_{(i,n+1)}$ is $\kappa_i$-closed in $V[\R_n]$.

\item[(c)] We wish to show that $\P_i^\E$ is $\kappa_{i+1}$-cc in $V[\R_n][\E_{(i,n+1)}]$. 
~

The forcing $\R_n*\dot{\E}_{(i,n+1)}$ is forcing equivalent to 
\beq
\R_{i-1}*\dot{\Q}_{i-1}*(\dot{\Q}_i\times \dot{\P}_{i+1}^\E)*\dot{\Q}_{i+1}*\dot{\R}_{(i+1,n)} *\dot{\E}_{(i+1,n+1)}.
\eeq

As both $\P_i^{\E}$ and $\Q_{i-1}$ are $\kappa_{i+1}$-Knaster in $V[\R_{i-1}]$, $\Q_{i-1}$ forces that $\P_{i}^\E$ is $\kappa_{i+1}$-cc and thus $\P_i^{\E}$ is $\kappa_{i+1}$-cc in $V[\R_i]$. Now, in $V[\R_i]$, $(\Q_i\times \P_{i+1}^\E)*\dot{\Q}_{i+1}$ is forcing equivalent to 
\beq\label{eq:forcing_cc}
(\Q_i\times \P_{i+1}^\E\times \P_{i+1})*\dot{\D}_{i+1},
\eeq
where $\dot{\D}_{i+1}$ is a $\Q_i\times\P_{i+1}$-name for a forcing notion which is $\kappa_{i+1}$-closed. As $\P_{i+1}^\E$ stays $\kappa_{i+1}$-distributive after $\Q_i\times\P_{i+1}$ by Lemma \ref{TPd}(iv), $\dot{\D}_{i+1}$ is still forced to be $\kappa_{i+1}$-closed after forcing with $\P_{i+1}^\E$ by Lemma \ref{L:closed-dis}(ii).

Our forcing $\P_i^\E$ is still $\kappa_{i+1}$-cc after $\Q_i\times \P_{i+1}^\E\times \P_{i+1}$ by Lemma \ref{TPd}(ii). By the previous paragraph and Lemma \ref{L:Easton}(ii) it is still $\kappa_{i+1}$-cc after the forcing (\ref{eq:forcing_cc}), which is forcing equivalent to $(\Q_i*\dot{\Q}_{i+1})\times \P_{i+1}^\E$. 

In $V[\R_{i+2}]$, $\P_{i+1}^\E$ is $\kappa_{i+1}$-distributive and $\R_{(i+1,n)}$ is $\kappa_{i+1}$-closed by Fact \ref{F:iterace}(ii) and thus $\R_{(i+1,n)}$ is still $\kappa_{i+1}$-closed in $V[\R_{i+2}][\P_{i+1}^\E]$ by Lemma \ref{L:closed-dis}(ii). Therefore our forcing $\P_i^\E$ is $\kappa_{i+1}$-cc in 
\beq\label{eq:model}
V[\R_{i+2}][\P_{i+1}^\E][\R_{(i+1,n)}]=V[\R_n][\P_{i+1}^\E]
\eeq
by Lemma \ref{L:Easton}(ii)

Now it is enough to realize that by the  induction hypothesis $\E_{(i+1,n+1)}$ is $\kappa_{i+1}$-closed in $V[\R_n]$ and $\P_{i+1}^\E$ is $\kappa_{i+1}$-distributive and thus $\E_{(i+1,n+1)}$ is $\kappa_{i+1}$-closed in the model (\ref{eq:model}) by Lemma \ref{L:closed-dis}(ii). Therefore we can apply Lemma \ref{L:Easton}(ii) to $\P_i^{\E}$ and $\E_{(i+1,n+1)}$ over the model  (\ref{eq:model}), hence $\P_i^{\E}$ is $\kappa_{i+1}$-cc in $V[\R_n][\E_{(i,n+1)}]$.
\end{itemize}

(ii).\ Easily follows from (i).
\end{proof}

\begin{Cor}\label{Cor:cc}
Let $n<\omega$ be given. In $V[\R_n]$ the following hold:
\bce[(i)]
\item For $i<n$, $\E_{(i,n+1)}$ forces $\E_{i+1}$ is $\kappa_{i+1}$-cc.
\item For $i<n+1$, $\E_{i+1}$ is $\kappa_{i+1}$-cc, in particular $\E_{n+1}$ is $\kappa_{n+1}$-cc.
\ece
\end{Cor}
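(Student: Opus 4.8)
The Corollary is a direct harvest of the chain-condition facts established inside the proof of Lemma \ref{L:R_n}, repackaged as standalone statements about the products $\E_{i+1}$ and their quotients over the Cummings--Foreman models $V[\R_n]$. The plan is to prove (i) first and then read off (ii) from (i) by an easy induction on $i$. Throughout I work in $V[\R_n]$ for a fixed $n$.

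**Proof of (i).** Fix $i<n$. I want: in $V[\R_n]$, the forcing $\E_{(i,n+1)}$ forces that $\E_{i+1}$ is $\kappa_{i+1}$-cc. The key observation is that part (c) in the induction step of Lemma \ref{L:R_n} established precisely that each $\P^\E_i$ is $\kappa_{i+1}$-cc in $V[\R_n][\E_{(i,n+1)}]$. So in the extension $V[\R_n][\E_{(i,n+1)}]$ I have at my disposal the chain condition of $\P^\E_i$ at $\kappa_{i+1}$. To get the chain condition of the \emph{whole} product $\E_{i+1}=\prod_{j\le i}\P^\E_j$, I use that each factor $\P^\E_j$ for $j<i$ is of the form $\Add(\kappa_j,e(\kappa_j))^{V[\R_{j-1}]}$ and is $\kappa_{j+1}$-Knaster in its defining model by Fact \ref{F:iterace}(v); since $\kappa_{j+1}\le\kappa_i<\kappa_{i+1}$, each such factor is in particular $\kappa_{i+1}$-Knaster. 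By Lemma \ref{lm:product}(i), a product of $\kappa_{i+1}$-Knaster forcings is $\kappa_{i+1}$-Knaster, and by Lemma \ref{lm:product}(ii) a $\kappa_{i+1}$-Knaster forcing times a $\kappa_{i+1}$-cc forcing is $\kappa_{i+1}$-cc. Combining the Knaster factors $\P^\E_j$ ($j<i$) with the $\kappa_{i+1}$-cc factor $\P^\E_i$ gives that $\E_{i+1}$ is $\kappa_{i+1}$-cc, all computed in $V[\R_n][\E_{(i,n+1)}]$, which is exactly the assertion that $\E_{(i,n+1)}$ forces $\E_{i+1}$ is $\kappa_{i+1}$-cc over $V[\R_n]$.

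**Proof of (ii).** Fix $i<n+1$. I claim $\E_{i+1}$ is $\kappa_{i+1}$-cc in $V[\R_n]$. For $i<n$ this follows from (i) by a standard absoluteness/projection remark: if $\E_{(i,n+1)}$ forces $\E_{i+1}$ is $\kappa_{i+1}$-cc and $\E_{(i,n+1)}$ itself adds no small antichains that would witness a failure back in $V[\R_n]$, then a maximal antichain of $\E_{i+1}$ of size $\kappa_{i+1}$ in $V[\R_n]$ would remain one in the extension, contradicting (i); concretely, being $\kappa_{i+1}$-cc is downward absolute from a generic extension to the ground model, so $\E_{i+1}$ is $\kappa_{i+1}$-cc already in $V[\R_n]$. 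The top case $i=n$ (giving the "in particular" clause) is handled directly: each factor $\P^\E_j$ for $j<n$ is $\kappa_{n+1}$-Knaster as above, and $\P^\E_n$ is $\kappa_{n+1}$-cc in $V[\R_n]$ by the base case of Lemma \ref{L:R_n}(c) (i.e.\ Lemma \ref{L:K-cc}(i) applied in $V[\R_n]$), so the product $\E_{n+1}$ is $\kappa_{n+1}$-cc by Lemma \ref{lm:product}(ii).

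**Main obstacle.** The only delicate point is bookkeeping about \emph{which model} each chain-condition claim lives in. The Knaster-ness of the lower factors $\P^\E_j$ is asserted in their defining model $V[\R_j]$, so I must check it is preserved up to $V[\R_n]$ (and its $\E_{(i,n+1)}$-extension); this is where the closure/distributivity lemmas of Section \ref{Sec:FP} and Fact \ref{F:iterace}(ii) are needed to push the Knaster property forward through the intervening $\kappa_{j}$-closed tails without collapsing antichains. Once the model-tracking is done carefully, the combinatorics is just repeated application of Lemma \ref{lm:product}.
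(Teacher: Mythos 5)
Your assembly of the product is where the argument breaks. You reduce everything to the claim that each lower factor $\P^\E_j$ ($j<i$) is $\kappa_{i+1}$-Knaster, justified by Fact \ref{F:iterace}(v) plus the remark that $\kappa_{j+1}\le\kappa_i<\kappa_{i+1}$ makes it ``in particular $\kappa_{i+1}$-Knaster''. But the Knaster property is \emph{not} upward closed in the cardinal parameter: $\kappa$-Knaster yields pairwise compatible subfamilies only of size $\kappa$, so while it implies $\mu$-cc for all $\mu\ge\kappa$, it does not imply $\mu$-Knaster (for the Cohen posets at hand one could try a $\Delta$-system argument at $\kappa_{i+1}$, but that requires checking cardinal arithmetic of the form $\kappa_i^{<\kappa_j}<\kappa_{i+1}$ in the relevant model, which you never do). The second hole is the transfer you defer to your ``main obstacle'' paragraph: Fact \ref{F:iterace}(v) gives Knasterness of $\P^\E_j$ only in $V[\R_j]$, and the intervening tail $\R_{[j,n)}$ together with $\E_{(i,n+1)}$ is merely distributive/closed at small cardinals; distributive forcing can add new $\kappa_{i+1}$-sequences of conditions, so the closure lemmas of Section \ref{Sec:FP} do \emph{not} preserve Knasterness (the paper's only Knaster-preservation tool, Lemma \ref{L:K-cc}(ii), works through cc forcing, not closed forcing). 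Without $\kappa_{i+1}$-Knasterness of $\E_i$ in $V[\R_n][\E_{(i,n+1)}]$ you cannot invoke Lemma \ref{lm:product}(ii), and you cannot fall back on mere chain conditions either, since a product of $\kappa_{i+1}$-cc forcings need not be $\kappa_{i+1}$-cc.

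The intended argument avoids Knasterness of products altogether by reading $\E_{i+1}$ as an \emph{iteration} rather than balancing a Knaster factor against a cc factor. Since $\E_{(j,n+1)}=\E_{(i,n+1)}\x\prod_{j<k\le i}\P^\E_k$, one has $V[\R_n][\E_{(i,n+1)}][\P^\E_i]\cdots[\P^\E_{j+1}]=V[\R_n][\E_{(j,n+1)}]$, and part (c) of the proof of Lemma \ref{L:R_n} says precisely that $\P^\E_j$ is $\kappa_{j+1}$-cc in that model, hence $\kappa_{i+1}$-cc because the chain condition (unlike Knasterness) \emph{is} upward closed. Iterating Lemma \ref{L:iterace_property}(iii) down from $j=i$ to $j=0$ then gives that $\E_{i+1}\iso\P^\E_i*\P^\E_{i-1}*\cdots*\P^\E_0$ is $\kappa_{i+1}$-cc over $V[\R_n][\E_{(i,n+1)}]$, which is (i); the case $i=n$ of the same computation (with trivial $\E_{(n,n+1)}$) gives the ``in particular'' clause of (ii) directly. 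Your downward-absoluteness step for (ii) when $i<n$ is essentially fine, though note it silently uses that $\kappa_{i+1}$ remains a cardinal after $\E_{(i,n+1)}$ (otherwise an antichain of size $\kappa_{i+1}$ in $V[\R_n]$ could shrink in cardinality); this is available from Lemma \ref{L:R_n}, but it deserves mention.
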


\begin{proof}
This is immediate from proof of (c) of the previous lemma using Lemma \ref{L:iterace_property} and fact that chain condition is upward closed.
\end{proof}

\begin{lemma}
In $V[\R_\omega]$, $\E_{[n,\omega)}$ is $\kappa_{n-1}$-closed for each $n>0$.
\end{lemma}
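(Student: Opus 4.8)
The plan is to verify $\kappa_{n-1}$-closure directly, exploiting the full-support structure of the product together with the fact that each factor $\P^{\E}_i=\Add(\kappa_i,e(\kappa_i))^{V[\R_{i-1}]}$ is $\kappa_i$-closed in the model $V[\R_{i-1}]$ where it is defined. Fix $n\ge 2$ (the case $n=1$ gives $\kappa_{n-1}=\kappa_0=\aleph_0$ and is trivial), and let $\seq{p_\xi}{\xi<\gamma}$ be a decreasing sequence in $\E_{[n,\omega)}$ with $\gamma<\kappa_{n-1}$, lying in $V[\R_\omega]$. Since $\E_{[n,\omega)}$ is a full-support product of the $\P^{\E}_i$ for $i\ge n$, a lower bound can be sought coordinatewise: it suffices to produce, for each $i\ge n$, a condition $q(i)\in\P^{\E}_i$ below every $p_\xi(i)$, and then the assembled function $q=\seq{q(i)}{i\ge n}$ will be the desired lower bound, provided $q$ itself lies in $V[\R_\omega]$.

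For a fixed $i\ge n$ the coordinate sequence $\seq{p_\xi(i)}{\xi<\gamma}$ has length $\gamma<\kappa_{n-1}\le\kappa_{i-1}$ and takes values in $\P^{\E}_i\subseteq V[\R_{i-1}]$. The key step is to show that this coordinate sequence already belongs to $V[\R_{i-1}]$. To this end I would observe that the tail $\R_{[i-1,\omega)}$ is $\kappa_{i-1}$-distributive over $V[\R_{i-1}]$: writing $\R_{[i-1,\omega)}=\Q_{i-1}*\dot{\R}_{[i,\omega)}$, the first factor $\Q_{i-1}$ is $\kappa_{i-1}$-distributive by Fact \ref{F:iterace}(ii), while $\dot{\R}_{[i,\omega)}$ is $\kappa_{i-1}$-closed, since every $\Q_j$ with $j\ge i$ is $\kappa_{j-1}$-closed (hence $\kappa_{i-1}$-closed) by the same fact, Lemma \ref{L:iterace_property}(i), and the fact that the inverse limit of length $\omega$ preserves $\kappa_{i-1}$-closure (here $\kappa_{i-1}\ge\aleph_1$). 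Lemma \ref{L:iterace_property}(ii) then gives $\kappa_{i-1}$-distributivity of the composition. Coding $\P^{\E}_i$ by an ordinal via an enumeration in $V[\R_{i-1}]$, the sequence $\seq{p_\xi(i)}{\xi<\gamma}$ becomes a $\gamma$-sequence of ordinals with $\gamma<\kappa_{i-1}$, so distributivity of the tail places it in $V[\R_{i-1}]$.

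Once the coordinate sequence is in $V[\R_{i-1}]$, I would use that $\P^{\E}_i$ is $\kappa_i$-closed there and that $\gamma<\kappa_{i-1}<\kappa_i$ to conclude that it has a lower bound in $V[\R_{i-1}]$; concretely the union $q(i)=\bigcup_{\xi<\gamma}p_\xi(i)$ is a partial function of size $<\kappa_i$ lying in $V[\R_{i-1}]$, hence a genuine condition of $\P^{\E}_i$ below all the $p_\xi(i)$. Finally, since each $q(i)$ is obtained from the original sequence by the uniform rule $q(i)=\bigcup_{\xi<\gamma}p_\xi(i)$, the whole function $q=\seq{q(i)}{i\ge n}$ is definable from $\seq{p_\xi}{\xi<\gamma}$ and therefore lies in $V[\R_\omega]$; by full support it is a lower bound of the sequence, which establishes $\kappa_{n-1}$-closure.

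The main obstacle is precisely the mismatch of models: the factors $\P^{\E}_i$ are closed only in their respective inner models $V[\R_{i-1}]$, not a priori in the larger model $V[\R_\omega]$, so one cannot simply quote closure in $V[\R_\omega]$. The crux is thus the descent step, showing that each coordinate sequence drops back into $V[\R_{i-1}]$ via $\kappa_{i-1}$-distributivity of the tail; after that the canonical (union) lower bounds can be computed inside $V[\R_{i-1}]$ and reassembled uniformly in $V[\R_\omega]$.
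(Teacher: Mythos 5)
Your proof is correct and takes essentially the same approach as the paper: the paper likewise reduces to showing each factor $\P^\E_i$ remains $\kappa_{i-1}$-closed in $V[\R_\omega]$, using exactly your factorization $\R_{[i-1,\omega)}=\Q_{i-1}*\dot{\R}_{[i,\omega)}$ (with $\Q_{i-1}$ being $\kappa_{i-1}$-distributive and $\dot{\R}_{[i,\omega)}$ forced $\kappa_{i-1}$-closed by Fact \ref{F:iterace}(ii)) to get $\kappa_{i-1}$-distributivity of the tail over $V[\R_{i-1}]$. The only difference is presentational: the paper quotes Lemma \ref{L:closed-dis}(ii) and the closure of full-support products of closed forcings, while your coordinatewise descent-and-union construction simply unpacks those two standard facts explicitly.
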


\begin{proof}
Let $n>0$ be given. As the product of $\kappa_{n-1}$-closed forcings is $\kappa_{n-1}$-closed, it suffices to show that for each $i\ge n$, $\P^\E_i=\Add(\kappa_i,e(\kappa_i))^{V[\R_{i-1}]}$ is $\kappa_{n-1}$-closed. 
$\P^\E_i$ is defined in $V[\R_{i-1}]$ and it is even $\kappa_i$-closed there, but $\R_{[i-1,\omega)}$, the tail of the iteration $\R_\omega$, is just $\kappa_{i-1}$-distributive in $V[\R_{i-1}]$\footnote{To see that $\R_{[i-1,\omega)}$ is $\kappa_{i-1}$ -distributive, note that $\R_{[i-1,\omega)}=\Q_{i-1}*\R_{[i,\omega)}$ and $\Q_{i-1}$ is $\kappa_{i-1}$-distributive and forces that $\R_{[i,\omega)}$ is $\kappa_{i-1}$-closed by Fact \ref{F:iterace}(ii).}, and therefore $\P_i$ remains $\kappa_{i-1}$-closed in $V[\R_\omega]$ and thus at least $\kappa_{n-1}$-closed.
\end{proof}

\begin{lemma}\label{L:sequence}
For each $0 \le n < \omega$, any $\kappa_n$-sequence of ordinals in $V[\R_\omega][\E]$ is already added by $\R_n*(\dot{\P}_n\times\dot{\E}_{n+1})$.
\end{lemma}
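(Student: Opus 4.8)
The plan is to exhibit $\R_\omega*\dot\E$ as a two-step iteration $[\R_n*(\dot\P_n\times\dot\E_{n+1})]*\dot W$ and to show that $\dot W$ is forced to be $\kappa_{n+1}$-distributive; since $\kappa_{n+1}$-distributivity means no new sequence of length $<\kappa_{n+1}$ is added and $\kappa_n<\kappa_{n+1}$, this is exactly the assertion that no new $\kappa_n$-sequence of ordinals appears. To obtain the factorisation I start from the identity $\R_\omega*\dot\E=\R_n*(\dot\E_{n+2}\times\dot\R_{[n,\omega)})*\dot\E_{[n+2,\omega)}$ recorded above, write $\E_{n+2}=\E_{n+1}\times\P^\E_{n+1}$, and use the projection $\rho_n$ (Fact \ref{F:iterace}(iii),(vi)) to factor $\Q_n\cong\P_n*\dot\D_n$ with $\D_n$ forced to be $\kappa_n$-closed and $\kappa_{n+1}$-distributive. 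As $\P^\E_{n+1}$ lives in $V[\R_n]$ it commutes with both $\P_n$ and $\dot\D_n$, so the middle product rearranges as $(\P_n\times\E_{n+1})*(\P^\E_{n+1}\times(\dot\D_n*\dot\R_{[n+1,\omega)}))$, giving $W=(\P^\E_{n+1}\times(\dot\D_n*\dot\R_{[n+1,\omega)}))*\dot\E_{[n+2,\omega)}$ over $M:=V[\R_n][\P_n\times\E_{n+1}]$.

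The $\R$-part of the tail is harmless: $\D_n$ is $\kappa_{n+1}$-distributive over $V[\R_n][\P_n]$, and $\R_{[n+1,\omega)}=\Q_{n+1}*\R_{[n+2,\omega)}$ is $\kappa_{n+1}$-distributive over $V[\R_{n+1}]$ (its first factor is $\kappa_{n+1}$-distributive and its tail $\kappa_{n+1}$-closed, by Fact \ref{F:iterace}(ii)), so by Lemma \ref{L:iterace_property}(ii) the composite $\D_n*\R_{[n+1,\omega)}$ is $\kappa_{n+1}$-distributive; this is precisely the Cummings--Foreman computation underlying their fact that every $\kappa_n$-sequence of ordinals in $V[\R_\omega]$ is added by $\R_n*\dot\P_n$. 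The genuinely new factors are the Cohen forcings $\P^\E_i$ ($i>n$) of $\E$: over their home model $V[\R_{i-1}]$ each is $\kappa_i$-closed, hence $\kappa_{n+1}$-closed, while the captured part $\P_n\times\E_{n+1}$ is $\kappa_{n+1}$-cc (Corollary \ref{Cor:cc} together with the $\kappa_{n+1}$-Knasterness of the Cohen forcing $\P_n$ and Lemma \ref{lm:product}(ii)).

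The main obstacle is that over $M$ none of the tail factors is $\kappa_{n+1}$-closed any longer: forcing with the $\kappa_{n+1}$-cc product $\P_n\times\E_{n+1}$ destroys the closure of $\P^\E_{n+1}$, and the $\R$-tail was only distributive to begin with, so one cannot combine the constituents of $W$ by the naive ``$\kappa_{n+1}$-closed times $\kappa_{n+1}$-distributive is $\kappa_{n+1}$-distributive'' principle (Lemma \ref{L:closed-dis}(i)). I would get around this exactly as in Lemma \ref{TPd}, by the submodel/term-forcing device: to see that $\P^\E_{n+1}\times(\dot\D_n*\dot\R_{[n+1,\omega)})$ adds no $\kappa_n$-sequence over $M$, take such a sequence $x$, replace each Mitchell-style step $\Q_m$ ($m\ge n$) by its Cohen-times-term factor $\P_m\times\T_m$ through the projection $\pi_m$ (Fact \ref{F:projekce_quotient}; for $m=n$ the term part $\T_n$ covers $\D_n$), and force the corresponding term-forcing quotients on top of $M$. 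Then $x$ lands in an extension of $M$ by the term forcings $\T_m$ together with $\P^\E_{n+1}$, all of which are \emph{genuinely} $\kappa_{n+1}$-closed in the model where they are introduced; Easton's lemma (Lemma \ref{L:Easton}(i)) applied to the $\kappa_{n+1}$-cc captured part and these $\kappa_{n+1}$-closed forcings---in the same pattern as the proof of Lemma \ref{TPd}(i),(iv)---then forces $x\in M$.

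Finally $\E_{[n+2,\omega)}$ is treated the same way: it is $\kappa_{n+1}$-closed over $V[\R_\omega]$ and, after the term-forcing replacement, joins the block of $\kappa_{n+1}$-closed factors, so the same Easton argument shows it adds no $\kappa_n$-sequence over $V[\R_\omega][\E_{n+2}]$; Lemma \ref{L:iterace_property}(ii) then assembles the two steps into ``$W$ is $\kappa_{n+1}$-distributive over $M$''. The point I expect to be the real crux is the interleaving: the factors $\P^\E_i$ for $i\ge n+2$ live in models $V[\R_{i-1}]$ built by the very Mitchell forcings with which they must commute, so the term-forcing replacement and the Easton bookkeeping cannot be carried out in one stroke but must be done stage by stage along the iteration, a descending induction in the spirit of Lemma \ref{L:R_n}.
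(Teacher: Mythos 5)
Your proposal is correct and takes essentially the same route as the paper: the actual proof is precisely the stage-by-stage pull-back you describe, carried out in four successive Easton-lemma applications (the far tail, then $\dot{\D}_{n+1}$ via the factorization of $\Q_n*\dot{\Q}_{n+1}$ as $(\Q_n\times\P_{n+1})*\dot{\D}_{n+1}$, then $\P_{n+1}\times\P^\E_{n+1}$, and finally the term forcing $\T_n$ over $V[\R_n][\P_n\times\E_{n+1}]$), exactly in the pattern of Lemma \ref{TPd}. The one economy you miss is that no descending induction along the stages $i\ge n+2$ is needed: by Fact \ref{F:iterace}(ii) together with the closure lemma preceding Lemma \ref{L:sequence}, the entire block $\R_{[n+2,\omega)}*\dot{\E}_{[n+2,\omega)}$ is genuinely $\kappa_{n+1}$-closed in $V[\R_{n+2}]$, so a single application of Easton's lemma over $V[\R_{n+2}][\P^\E_{n+1}]$ (where $\E_{n+1}$ is $\kappa_{n+1}$-cc by Corollary \ref{Cor:cc}) disposes of the whole interleaved tail at once, and term-forcing replacement is only required at the two critical stages $n$ and $n+1$.
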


\begin{proof}
Let $n\ge 0$ be given. First note that by Fact \ref{F:iterace}(ii), $\R_{[n+2,\omega)}$ is $\kappa_{n+1}$-closed in $V[\R_{n+2}]$ and $\E_{[n+2,\omega)}$ is $\kappa_{n+1}$-closed in $V[\R_\omega]$, therefore $\R_{[n+2,\omega)}*\dot{\E}_{[n+2,\omega)}$ is $\kappa_{n+1}$-closed in $V[\R_{n+2}]$ and thus also in $V[\R_{n+2}][\P^\E_{n+1}]$ by Lemma \ref{L:closed-dis}(ii) as $\P^\E_{n+1}$ is $\kappa_{n+1}$-distributive in $V[\R_{n+2}]$. By Corollary \ref{Cor:cc}(i), $\E_{n+1}$ is $\kappa_{n+1}$-cc in $V[\R_{n+2}][\P^\E_{n+1}]$, therefore by Lemma \ref{L:Easton}(i), $\R_{[n+2,\omega)}*\dot{\E}_{[n+2,\omega)}$ is $\kappa_{n+1}$-distributive in $V[\R_{n+2}][\P^\E_{n+1}][\E_{n+1}]=V[\R_{n+2}][\E_{n+2}]$. Hence any $\kappa_n$-sequence of ordinals is already added by $\R_{n+2}*\dot{\E}_{n+2}$.

Now, work in $V[\R_n]$. The forcing $\Q_n*\dot{\Q}_{n+1}$ is forcing equivalent to $(\Q_n\times \P_{n+1})*\dot{\D}_{n+1}$, where $\dot{\D}_{n+1}$ is forced to be $\kappa_{n+1}$-closed and stays $\kappa_{n+1}$-closed after forcing with $\P_{n+1}^\E$ by Lemma \ref{TPd}(iv) and Lemma \ref{L:closed-dis}(ii). Now we can apply Lemma \ref{L:Easton}(i) over $V[\R_n][\Q_n\times \P_{n+1}\times \P_{n+1}^\E]$ to $\E_{n+1}$\footnote{Note that $\E_{n+1}$ is $\kappa_{n+1}$-cc in $V[\R_n]$ by Corollary \ref{Cor:cc} and it remains $\kappa_{n+1}$-cc over the present model by Lemma \ref{TPd}(ii).} and $\D_{n+1}$ to show that $\D_{n+1}$ is $\kappa_{n+1}$- distributive in $V[\R_n][\Q_n\times \P_{n+1}\times \P_{n+1}^\E][\E_{n+1}]=V[\R_{n+1}][\P_{n+1}][\E_{n+2}]$. Therefore any $\kappa_n$-sequence of ordinals is already added by $\R_{n+1}*\dot{\P}_{n+1}*\dot{\E}_{n+2}$.

Work again in $V[\R_n]$. $\E_{n+1}$ is $\kappa_{n+1}$-cc and $\P_{n+1}\times \P_{n+1}^\E$ is $\kappa_{n+1}$-closed here, therefore by Lemma \ref{TPd}(i) $\P_{n+1}\times \P_{n+1}^\E$ is $\kappa_{n+1}$-distributive in $V[\R_n][\Q_n][\E_{n+1}]=V[\R_{n+1}][\E_{n+1}]$. Therefore any $\kappa_n$-sequence of ordinals is already in $V[\R_{n+1}][\E_{n+1}]$.

In $V[\R_n]$, $\Q_n$ is a projection of $\P_n\times \T_n$, where $\T_n$ is $\kappa_{n+1}$-closed and $\P_n$ is $\kappa_{n+1}$-Knaster, therefore $\E_{n+1}\times \P_n$ is $\kappa_{n+1}$-cc and hence $\T_n$ stays $\kappa_{n+1}$-distributive after forcing with $\E_{n+1}\times \P_n$ by Lemma \ref{L:Easton}(i). It follows that every $\kappa_n$-sequence is added by $\R_n*( \dot{\P}_n\times\dot{\E}_{n+1})$, as desired.
\end{proof}

Now we can finish the proof of Theorem \ref{th:contf}:

\begin{proof}
(Proof of theorem \ref{th:contf}.) The theorem follows from Lemma \ref{L:sequence}, Lemma \ref{L:R_n} and the fact that $\P_n\times \E_{n+1}$ is isomorphic to $\E_{n+1}$ over $V[\R_n]$.
\end{proof}

\subsection{The tree property}\label{sub:tp}

In this section we finish the argument by showing:

\begin{theorem}\label{th:tp}
$\R_\omega*\dot{\E}$ forces that the tree property holds at $\kappa_{n+2}$, for every $n\ge 0$.
\end{theorem}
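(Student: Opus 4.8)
The plan is to prove $\TP(\kappa_{n+2})$ separately for each $n\ge 0$ by lifting a supercompactness embedding through $\R_\omega*\dot\E$, reading off a cofinal branch from the image of the tree, and then pushing that branch back down by a product analysis of the quotient. Fix $n\ge 0$, put $\kappa=\kappa_{n+2}$, and let $T$ be a $\kappa$-tree in $V[\R_\omega][\E]$. Since $\kappa$ is supercompact in $V$ and $\R_\omega$ uses the Laver function $F_{n+2}$ precisely in order to keep the supercompactness of $\kappa$ alive in a prepared form, I would first fix $j\colon V\to M$ with $\mathrm{crit}(j)=\kappa$, $j(\kappa)$ above all relevant ordinals and $M$ closed under $\theta$-sequences for a suitably large $\theta$, and aim to lift it to $\hat{j}\colon V[\R_\omega][\E]\to M[\hat{j}(\R_\omega*\dot\E)]$.

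For the lift I would quote the analysis of Cummings and Foreman, which lifts $j$ through $\R_\omega$; the genuinely new part is carrying it through $\dot\E$. Here I would split $\E=\E_{n+2}\times\E_{[n+2,\omega)}$. The upper factor $\E_{[n+2,\omega)}$ is $\kappa_{n+1}$-closed, and its leading piece $\P^\E_{n+2}=\Add(\kappa,e(\kappa))^{V[\R_{n+1}]}$ is $\kappa$-directed closed in its home model; because the Cohen forcings in $\E$ were deliberately taken from the same inner models as the Mitchell Cohen parts of $\R_\omega$, the value $j(F_{n+2})(\kappa)$ can be made to anticipate exactly these directed-closed forcings, so they are absorbed by a master condition. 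The lower factor $\E_{n+2}$ is only $\kappa$-cc, so no master condition is available; there I would instead transfer the generic by the standard technique of building an $M$-generic for $j(\E_{n+2})$ extending the pointwise image of the $\E_{n+2}$-generic, using the closure of $M$ and the chain condition. I expect this incorporation of $\dot\E$ into the lift --- in particular the generic transfer through the cc part $\E_{n+2}$ below the critical point, together with checking that the resulting quotient still has the product structure needed below --- to be the main obstacle.

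Granting the lift, the branch is extracted as usual: since $\mathrm{crit}(\hat{j})=\kappa$ and $T$ has height $\kappa$, we have $\hat{j}(T)\rest\kappa=T$, so any node on level $\kappa$ of $\hat{j}(T)$ determines a cofinal branch $B$ of $T$ with $B\in M[\hat{j}(\R_\omega*\dot\E)]\sub V[\R_\omega][\E][Q]$, where $Q=\hat{j}(\R_\omega*\dot\E)/(\R_\omega*\dot\E)$ is the quotient over $V[\R_\omega][\E]$. To see $B\in V[\R_\omega][\E]$ I would run the Abraham-style product analysis on $Q$: as in the preliminaries, $Q$ is a projection of a product of a Cohen-type forcing (the new subsets of $\kappa_n$, which is $\kappa$-Knaster, hence has $\kappa$-cc square) and a $\kappa_n^{+}$-closed term-type forcing. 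By Facts \ref{F:Knaster} and \ref{F:Square_cc} the Cohen factor adds no cofinal branch to the $\kappa$-tree $T$, and by Fact \ref{F:ccc_Closed} --- using that $2^{\kappa_n}=e(\kappa_n)\ge\kappa_{n+2}$ holds in $V[\R_\omega][\E]$ by Theorem \ref{th:contf} --- the $\kappa_n^{+}$-closed factor adds none over the Cohen extension. Hence $Q$ adds no cofinal branch to $T$, so $B\in V[\R_\omega][\E]$, which is the desired cofinal branch and gives $\TP(\kappa_{n+2})$.
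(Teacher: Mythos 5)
Your high-level skeleton --- Laver anticipation of the directed-closed pieces including $\P^\E_{n+2}$, branch extraction from $j(T)\rest\kappa=T$, and a pull-back via square-cc Cohen parts plus closed term parts using Facts \ref{F:Square_cc} and \ref{F:ccc_Closed} --- matches the paper in spirit, but there is a genuine gap at the very first step: you propose to lift $j$ through \emph{all} of $\R_\omega*\dot{\E}$, ``quoting Cummings--Foreman'' for the $\R_\omega$ part. Cummings and Foreman never lift through the whole iteration, and no known technique does. With $\mathrm{crit}(j)=\kappa=\kappa_{n+2}$, the image iteration contains stages $j(\Q_m)$ for $m\ge n+2$, which are Mitchell-style forcings of inaccessible size that are merely $\kappa_m$-distributive, not $j(\kappa)$-directed closed (their Cohen coordinate destroys closure), so no master condition exists for them, and no generic-transfer argument applies either. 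The paper avoids this entirely: Lemma \ref{L:sequence} shows every $\kappa$-sequence --- in particular the tree $T$ --- is already added by $\R_{n+2}*(\dot{\P}_{n+2}\times\dot{\E}_{n+3})$, and Lemma \ref{C:small} further truncates each Cohen factor of $\E_{n+3}$ to length at most $\kappa$ via a nice-name argument; only then is $j$ lifted, and only through this captured initial segment. The truncation is not cosmetic: the $\kappa$-square-cc argument of Claim \ref{Cl:Cohens} rests on isomorphisms such as $\P^\E_{n+1}\times\P_{n+1}\cong\P_{n+1}$ and the absorption of $\E_{n+2}$, which fail outright for untruncated lengths (e.g.\ if $e(\kappa_{n+1})>\kappa_{n+3}$), so without the capture-and-truncate step your final quotient analysis loses its chain-condition hypotheses.

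The second gap is your treatment of $\E_{n+2}$. The proposed cc ``generic transfer'' --- building an $M$-generic for $j(\E_{n+2})$ extending $j''x_{<n+2}$ from the closure of $M$ --- does not go through: $M$ is only closed under $\lambda$-sequences, while already $j(\P^\E_0)=\Add(\omega,j(e(\kappa_0)))$ has far more than $\lambda$ many dense sets in $M$ to meet, the forcing has no closure to run an enumeration argument, and transfer tricks for Cohen forcing require spare generic coordinates you do not have. The correct (and cheaper) move, which the paper makes, is to note that conditions of the truncated $\E_{n+2}$ have size below $\mathrm{crit}(j)$, so $j$ is the identity on them and $j(\P^\E_i)$ factors as $\P^\E_i\times j(\P^\E_i)\rest[\kappa,j(\kappa))$; one then \emph{forces further} to obtain $y_i=x_i\times x^*_i$. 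The price of forcing further is that the lifted model sits above $M_{n-1}[G_n][G_{n+1}][x_{<n+3}]$ by a long list of auxiliary generics ($x^*_{<n+2}$, $h^0_{n+1}$, $G_\S$, $h^*_n$, $H^1_{n+1}$, $y_{n+2}$), and the real content of the proof is the rearrangement into the model (\ref{M_1}) together with Claims \ref{Cl:Cohens}, \ref{Cl:Closed} and \ref{Cl:zbytek}, which verify the square-cc, cc and closure hypotheses \emph{in the specific intermediate models} where Facts \ref{F:Square_cc} and \ref{F:ccc_Closed} are applied --- these properties are not available over $V[\R_\omega][\E]$ wholesale, as your one-sentence product analysis of the full quotient $Q$ would require. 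So the branch pull-back you sketch is the right idea, but as stated both the decomposition of $Q$ and its cc/closure hypotheses are unsubstantiated, and the missing capture/truncation step is what would make them provable.
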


We prove the theorem in two subsections and several lemmas. Let us fix some $n\ge 0$, and let us denote $\kappa_{n+2}$ by $\kappa$. We show the tree property at $\kappa$. 

In $V[\R_{n+2}]$, let $\E_{n+3}|\kappa$ be the product $\prod_{i<n+3} \Add(\kappa_i,\lambda_i)^{V[\R_{i-1}]}$, where $\lambda_i=\kappa$ for $e(\kappa_i)>\kappa$ and $\lambda_i=e(\kappa_i)$ otherwise.

\begin{lemma}\label{C:small}
If $\R_\omega*\dot{\E}$ adds a $\kappa$-Aronszajn tree, so does $\R_{n+2}*\dot{\E}_{n+3}|\kappa$.
\end{lemma}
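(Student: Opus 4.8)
The plan is to trap the offending tree inside a small part of the iteration, localise it to few Cohen coordinates, absorb those coordinates into $\dot\E_{n+3}|\kappa$, and finally check that the coordinates left over cannot add a cofinal branch. First I would code the given $\kappa$-Aronszajn tree $T$ as a subset of $\kappa$: its underlying set has size $\kappa$, so after relabelling nodes by ordinals below $\kappa$ the tree order becomes a subset of $\kappa\times\kappa\cong\kappa$. Applying Lemma \ref{L:sequence} with its index equal to $n+2$, this $\kappa$-sequence already lies in $V[\R_{n+2}][\P_{n+2}\times\E_{n+3}]$; and since cofinal branches are never removed by further forcing, $T$ is still $\kappa$-Aronszajn there. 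Hence it suffices to show that $\P_{n+2}\times\E_{n+3}$ adds a $\kappa$-Aronszajn tree over $V[\R_{n+2}]$.

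The key arithmetical observation is that in $V[\R_{n+2}]$ one has $\kappa^+=\kappa_{n+3}$ (the stage $\Q_{n+1}$ collapses the interval $(\kappa_{n+2},\kappa_{n+3})$), so the bound ``$\kappa_{n+3}$-cc'' that Corollary \ref{Cor:cc} gives for $\E_{n+3}$ is exactly ``$\kappa^+$-cc''. Since $\P_{n+2}=\Add(\kappa,\kappa_{n+4})^{V[\R_{n+1}]}$ is Cohen forcing at $\kappa$ and hence $\kappa^+$-Knaster, the product $\P_{n+2}\times\E_{n+3}$ is $\kappa^+$-cc over $V[\R_{n+2}]$ by Lemmas \ref{lm:product} and \ref{L:iterace_property}. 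Consequently a nice name for the subset $T\subseteq\kappa$ uses, for each of the $\kappa$ membership questions, an antichain of size at most $\kappa$, while each condition of this (finitely many factors) Cohen product has domain of size $<\kappa$; so $T$ depends on a set $C$ of at most $\kappa$ coordinates of the product.

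Next I would absorb $C$ into $\dot\E_{n+3}|\kappa$. Regrouping $C$ by the cardinal $\kappa_i$ and merging the two Cohen-at-$\kappa$ factors (the one from $\P_{n+2}$ and the one from $\P^\E_{n+2}$), the subforcing determined by $C$ is, after reindexing inside each home model $V[\R_{i-1}]$, isomorphic to a regular subforcing $R_0$ of $\E_{n+3}|\kappa$: if $C$ reaches coordinates beyond the cap in a factor then $e(\kappa_i)>\kappa$, so the cap is $\lambda_i=\kappa$ and the at most $\kappa$ used coordinates fit into the $\kappa$ coordinates of $\Add(\kappa_i,\kappa)$, while the two level-$\kappa$ pieces are swallowed by $\Add(\kappa,\kappa)^{V[\R_{n+1}]}$ because $\kappa+\kappa=\kappa$. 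This absorption, together with the bookkeeping of the inner models $V[\R_{i-1}]$, is the step that requires the most care and is the main obstacle; it is precisely where the $\kappa^+$-cc of the previous paragraph and the relations $\kappa_i^{<\kappa_i}=\kappa_i$ are needed. The outcome is that $T$ lives in a generic extension of $V[\R_{n+2}]$ by $R_0$ and is $\kappa$-Aronszajn there.

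Finally I would check that passing from $R_0$ up to the full $\E_{n+3}|\kappa$ preserves the Aronszajn-ness of $T$. Writing $\E_{n+3}|\kappa\cong R_0\times Q$, the quotient $Q$ splits as $Q_{cc}\times Q_{cl}$, where $Q_{cc}$ is the product of the Cohen forcings at the $\kappa_i<\kappa$ (which is $\kappa$-Knaster, so has $\kappa$-cc square) and $Q_{cl}$ is the remaining Cohen forcing at $\kappa$ (which is $\kappa_{n+1}^+$-closed). Forcing first with $Q_{cc}$, Fact \ref{F:Square_cc} shows no cofinal branch is added to the $\kappa$-tree $T$; forcing then with $Q_{cl}$ over the resulting $\kappa_{n+1}^+$-cc extension, and using $2^{\kappa_{n+1}}\ge\kappa$, Fact \ref{F:ccc_Closed} again shows no cofinal branch is added. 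The closure of $Q_{cl}$ and the chain condition of $Q_{cc}$ needed over the intermediate models follow from Easton's lemma (Lemma \ref{L:Easton}) once one forces in this order. Thus $T$ stays $\kappa$-Aronszajn in $V[\R_{n+2}][\E_{n+3}|\kappa]$; and since $\E_{n+3}|\kappa$ is a weakly homogeneous Cohen product, the existence of a $\kappa$-Aronszajn tree does not depend on the generic, so $\R_{n+2}*\dot\E_{n+3}|\kappa$ adds a $\kappa$-Aronszajn tree, as required.
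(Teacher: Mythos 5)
Your first three steps track the paper's own proof: Lemma \ref{L:sequence} (with index $n+2$) localises $T$ to $V[\R_{n+2}][\P_{n+2}\times\E_{n+3}]$, the $\kappa^+$-cc of this forcing over $V[\R_{n+2}]$ (where $\kappa^+=\kappa_{n+3}$) yields a nice name using at most $\kappa$ coordinates, and merging the two Cohen-at-$\kappa$ factors and reindexing absorbs those coordinates into the capped product. But your fourth step opens a genuine gap that the paper's proof is designed not to have. Having realised the support as a \emph{proper} subforcing $R_0$ of $\E_{n+3}|\kappa$, you are obliged to show that the quotient $Q$ adds no cofinal branch to $T$, and your argument for this fails at $Q_{cl}$, the leftover Cohen forcing at $\kappa$. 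That forcing is $\Add(\kappa,\cdot)^{V[\R_{n+1}]}$, and in $V[\R_{n+2}]$ it is merely $\kappa$-distributive, not $\kappa$-closed (Fact \ref{F:iterace}(v); this loss of closure is exactly the point stressed in the paper's introduction). Since $\kappa_{n+1}^+=\kappa$ in all these models, your claim that $Q_{cl}$ is ``$\kappa_{n+1}^+$-closed'' over the extension by $R_0\times Q_{cc}$ is unjustified. What does survive (because $\Q_{n+1}$ is $\kappa_{n+1}$-distributive) is $\kappa_{n+1}$-closure, but downgrading to that in Fact \ref{F:ccc_Closed} would force the cc side to be $\kappa_{n+1}$-cc, which it cannot be: the Cohen-at-$\kappa$ coordinates of $R_0$ carrying the name of $T$ are not even $\kappa$-cc. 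Recovering usable closure would require the term-forcing decompositions of Lemma \ref{TPd} and Section \ref{sub:tp}, i.e., the heavy machinery of the main theorem, inside what should be a soft counting lemma. (Your Knaster claim for $Q_{cc}$ over $V[\R_{n+2}][R_0]$ likewise needs the Lemma \ref{L:R_n}(c)-style argument and is not immediate.)

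The repair --- and the paper's actual move --- makes the entire fourth step disappear: enlarging the support of a nice name costs nothing, so pad each $A_i$ to have size exactly $\lambda_i=\min(e(\kappa_i),\kappa)$. Then $\prod_{i<n+3}\Add(\kappa_i,A_i)$ is isomorphic to \emph{all} of $\E_{n+3}|\kappa$ (any bijection of $A_i$ with $\lambda_i$ induces the isomorphism factorwise), so a condition of $\E_{n+3}|\kappa$ forces the transported name to be a $\kappa$-Aronszajn tree, and there is no leftover quotient whose branch-preservation must be checked; Aronszajn-ness only needs to pass \emph{downward} from $V[\R_\omega][\E]$, which is automatic. Your closing weak-homogeneity remark then also becomes unnecessary.
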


\begin{proof}
Assume for contradiction that there is a $\kappa$-Aronszajn tree $T$ in generic extension by $\R_\omega*\dot{\E}$. By Lemma \ref{L:sequence}, $T$ has to be added by $\R_{n+2}*(\dot{\P}_{n+2}\times\dot{\E}_{n+3})$ and as this forcing is isomorphic to $\R_{n+2}*\dot{\E}_{n+3}$, $T$ is in the generic extension by $\R_{n+2}*\dot{\E}_{n+3}$. 

Now, work in $V[\R_{n+2}]$. In this model $\kappa^+=\kappa_{n+3}=\aleph_{n+3}$ and by Lemma \ref{L:R_n}, $\E_{n+3}$ is $\kappa^+$-cc. Therefore there is a nice $\E_{n+3}$-name $\dot{T}$ for $T$ of size $\kappa$. Such a nice name contains at most $\kappa$-many conditions in $\E_{n+3}$, hence we can restrict each $\Add(\kappa_i,e(\kappa_i))$ (if necessary) in the product $\E_{n+3}$ to $\Add(\kappa_i,A_i)$, where $A_i$ has size at most $\kappa$ and it is determined by the support of conditions in $\dot{T}$. The claim now follows as any bijection between $A_i$ and $\kappa$ gives an isomorphism between $\Add(\kappa_i,A_i)$ and $\Add(\kappa_i,\kappa)$.
\end{proof}

Let us denote $\R_{n+2}*\dot{\E}_{n+3}|\kappa$ by $\R_{n+2}*\dot{\E}_{n+3}$  in the interest of brevity and let us keep in mind that all the Cohen forcings in $\E_{n+3}$ have length less than or equal to $\kappa$. 

Let us fix some notation now. Let $G_i$ denote a $\Q_i$-generic over $V[G_0][\dots][G_{i-1}]$, for each $i<n+2$, and $x_i$ a $\P^\E_i$-generic over $V[G_0][\dots][G_{n+1}][x_0][\dots][x_{i-1}]$ for each $i<n+3$. Let us denote by $V_{n-1}$ the model$V[G_0][\dots][G_{n-1}]$ and let us write for brevity $x_{<i}$ instead of $x_0\x\dots\x x_{i-1}$ for $i\le n+3$.

\subsubsection{Lifting an embedding}

We wish to lift an appropriate embedding to the model $V_{n-1}[G_n][G_{n+1}][x_{<n+3}]$ which contains the tree $T$.

In $V$, using the Laver function $F_{n+2}$, let us choose a supercompact embedding $j:V\then M$ such that:

\medskip

\bce[(i)]
\item $\mbox{crit}(j)=\kappa$, $j(\kappa)>\lambda$ and ${^\lambda M}\sub M$.\footnote{Recall that $\lambda$ is the limit of the sequence of the supercompact cardinals $\seq{\kappa_n}{n<\omega}$.}
\item $j(F_{n+2})(\kappa)$ is the canonical $\R_{n}$-name for the canonical $\Q_n$-name for $\T_{n+1}\times\P^\E_{n+2}$.
\ece

\medskip

We are going lift $j$ first to the model $V_{n-1}[G_n][G_{n+1}][x_{n+2}]$. The argument is essentially the same as in \cite{CUMFOR:tp}, except that we have the extra forcing $\P^\E_{n+2}$. Let us review the basic steps of the lifting. 

\begin{itemize}
\item As $j(\R_n)=\R_n$, we can lift the embedding from $V_{n-1}$ to $M_{n-1}=M[G_0][\dots][G_{n-1}]$.
\item Since $j$ is identity below $\kappa=\kappa_{n+2}$, $j(\Q_n)|\kappa=\Q_n$ and we can lift the embedding further from $V_{n-1}[G_n]$ to $M_{n-1}[G_n][h_n]$ in $V_{n-1}[G_n][h_n]$, where $h_n$ is $j(\Q_n)/G_n$-generic over $V_{n-1}[G_n]$.
\item Now work in $V_{n-1}[G_n][h_n]$ and define:
\beq
G^1_{n+1}\times x_{n+2}=\set{f(\kappa)^{G_n}}{\mbox{ for some }p,q, (p,q,f)\in G_n*h_n}.
\eeq
By our choice of $j$, $G^1_{n+1}\times x_{n+2}$ is $\T_{n+1}\times\P^\E_{n+2}$-generic over $V_{n-1}[G_n]$. 

By the projection $\sigma^{j(\kappa)}_\kappa$ (see the analysis below Remark \ref{T_Collaps}), $V_{n-1}[G_n][h_n]=V_{n-1}[G_n][G^1_{n+1}\times x_{n+2}][h^*_n]$ for some $j(\Q_n)/(G_n*(G^1_{n+1}\times x_{n+2}))$-generic filter $h^*_n$.

The family of condition $j''(G^1_{n+1}\times x_{n+2})$ has a lower bound $t= ((\emptyset, p_m,q_m),t_m)$ in the product forcing $j(\T_{n+1}) \x j(\P^\E_{n+2})$ because $j(\T_{n+1}\times\P^\E_{n+2})$ is $j(\kappa)$-directed closed and $j(\kappa)>\lambda>\kappa_{n+3}$. The condition $t$ can be used as a master condition for $j$ and $\Q_{n+1}\times\P_{n+2}^\E$: if $H_{n+1}\times y_{n+2}$ is $j(\Q_{n+1})\times j(\P^{\E}_{n+2})$-generic over $V_{n-1}[G_n][h_n]$ and $H_{n+1}$ contains $(\emptyset,p_m,q_m)$ and $y_{n+2}$ contains $t_m$, then ${j^{-1}}'' (H_{n+1} \x y_{n+2})$ generates a $\Q_{n+1}\times \P_{n+2}^\E$-generic over $V_{n-1}[G_n]$. Let us denote by $G_{n+1}\times x_{n+2}$ the $\Q_{n+1}\times \P_{n+2}^\E$-generic over $V_{n-1}[G_n]$ generated by ${j^{-1}}'' (H_{n+1}\x y_{n+2})$.
\beq
G_{n+1}\times x_{n+2}=\pi_{n+1}'' (\rho_{n+1}''G_{n+1}\times G^1_{n+1})\times x_{n+2}.
\eeq
\end{itemize}

Therefore we can lift the embedding to \beq j: V_{n-1}[G_n][G_{n+1}][x_{n+2}] \then M_{n-1}[G_n][h_n][H_{n+1}][y_{n+2}]. \eeq Note that the model $M_{n-1}[G_n][h_n][H_{n+1}][y_{n+2}]$ is the same as $M_{n-1}[G_n][G^1_{n+1}\times x_{n+2}][h^*_n][H_{n+1}][y_{n+2}]$.

Now we need to lift $j$ further to $\E_{n+2}$. Since $j$ is identity below $\kappa$ and $\E_{n+2}=\prod_{i<n+2}\P_i^\E$, $j$ is the identity on conditions in $\E_{n+2}$. For each $i<n+2$, $j(\P^\E_i)=\P^\E_i\times j(\P^\E_i)|[\kappa,j(\kappa))$\footnote{Note that $ j(\P^\E_i)|[\kappa,j(\kappa))$ is isomorphic to $j(\P^\E_i)$ therefore for simplification of the notation we will write $j(\P^\E_i)$ instead of $ j(\P^\E_i)|[\kappa,j(\kappa))$}. Therefore we can lift the embedding further from the model  $V_{n-1}[G_n][G_{n+1}][x_{n+2}][x_{<n+2}]$ to $M_{n-1}[G_n][h_n][H_{n+1}][y_{n+2}][y_{<n+2}]$, where 
\begin{itemize}
\item $y_{<n+2}$ denotes $y_0\times \dots\times y_{n+1}$ and
\item for each $i<n+2$ there is $x^*_i$ such that $y_i=x_i\times x^*_i$ and $y_i$ is $j(\P^\E_i)$-generic over $V_{n-1}[G_n][h_n][H_{n+1}][y_{n+2}][y_{<i}]$.
\end{itemize}

Let us write the model $M_{n-1}[G_n][h_n][H_{n+1}][y_{n+2}][y_{<n+2}]$ equivalently as 
\beq\label{M:2}
M_{n-1}[G_n][G^1_{n+1}\times x_{n+2}][h^*_n][H_{n+1}][y_{n+2}][y_{<n+2}].
\eeq

We will rearrange the generics to be able to argue for the tree property in the next section. 

$H_{n+1}$ is $j(\Q_{n+1})$-generic over the $M_{n-1}[G_n][G^1_{n+1}\times x_{n+2}][h^*_n]$ and by applying the projection $\rho^*_{n+1}:j(\Q_{n+1})\then j(\P_{n+1})$ we get a $j(\P_{n+1})$-generic; let us denote it by $H^0_{n+1}$ and let us also denote by $H^1_{n+1}$ a $j(\D_{n+1})=j(\Q_{n+1})/H^0_{n+1}$-generic over  $M_{n-1}[G_n][G^1_{n+1}\times x_{n+2}][h^*_n][H^0_{n+1}]$ such that $H_{n+1}=H^0_{n+1}*H^1_{n+1}$. Now the model (\ref{M:2}) is equal to 
\beq\label{M:3}
M_{n-1}[G_n][G^1_{n+1}\times x_{n+2}][h^*_n][H^0_{n+1}][H^1_{n+1}][y_{n+2}][y_{<n+2}].
\eeq

The elementary embedding $j$ is in particular a regular embedding from $\P_{n+1}$ to $j(\P_{n+1})$ and therefore ${j^{-1}}''H^0_{n+1}$ yields a generic filter for  $\P_{n+1}$ over $M_{n-1}[G_n][G^1_{n+1}\times x_{n+2}][h^*_n]$. Let us denote this generic by $G^0_{n+1}$ and let $h^0_{n+1}$ be a generic filter such that $G^0_{n+1}\x h^0_{n+1}=H^0_{n+1}$. Therefore the model (\ref{M:3}) can be decomposed further as 
\beq\label{M:4}
M_{n-1}[G_n][G^1_{n+1}\times x_{n+2}][h^*_n][G^0_{n+1}][h^0_{n+1}][H^1_{n+1}][y_{n+2}][y_{<n+2}].
\eeq

Now note that $\P_{n+1}$ lives already in $M_{n-1}$ and as $G^0_{n+1}$ is generic over the model $M_{n-1}[G_n][G^1_{n+1}\times x_{n+2}][h^*_n]$, $G^0_{n+1}$ and $h^*_n$ are mutually generic over $M_{n-1}[G_n][G^1_{n+1}\times x_{n+2}]$ and also $G^0_{n+1}$, $G^1_{n+1}$ and $x_{n+2}$ are mutually generic over $M_{n-1}[G_n]$. Therefore we can rearrange model (\ref{M:4}) as
\beq\label{M:5}
M_{n-1}[G_n][G^0_{n+1}\x G^1_{n+1}\times x_{n+2}][h^*_n][h^0_{n+1}][H^1_{n+1}][y_{n+2}][y_{<n+2}].
\eeq
Recall that there is the projection $\pi_{n+1}: \P_{n+1}\times \T_{n+1}\then \Q_{n+1}$.\footnote{$\pi_{n+1}'' (G^0_{n+1}\x G^1_{n+1})=G_{n+1}$} Therefore we can rewrite the model (\ref{M:5}) as 
\beq\label{M*}
M_{n-1}[G_n][G_{n+1}][G_{\S}][x_{n+2}][h^*_n][h^0_{n+1}][H^1_{n+1}][y_{n+2}][y_{<n+2}],
\eeq
where $G_\S$ is $\S_{n+1}$-generic over $M_{n-1}[G_n][G_{n+1}]$ such that $G^0_{n+1}\x G^1_{n+1}=G_{n+1}*G_\S$. Recall that $\S_{n+1}$ is the quotient forcing $\P_{n+1}\times \T_{n+1}/G_{n+1}$. 

Finally, for each $i<n+2$, $y_i=x_i\x x^*_i$, hence we can write the model (\ref{M*}) as follows:
\beq\label{M:6}
M_{n-1}[G_n][G_{n+1}][G_{\S}][x_{n+2}][h^*_n][h^0_{n+1}][H^1_{n+1}][y_{n+2}][x_0\times x^*_0][\dots][x_{n+1}\x x^*_{n+1}],
\eeq
and again by mutual genericity we can rearrange the generic filters in (\ref{M:6}) as follows:
\beq\label{M_1}
M_{n-1}[G_n][G_{n+1}][x_{<n+3}][x^*_{<n+2}][h^0_{n+1}][G_{\S}][h^*_n][H^1_{n+1}][y_{n+2}].
\eeq

\subsubsection{The tree property argument}

Recall that we assume that  $T$ is $\kappa$-Aronszajn tree in $V_{n-1}[G_n][G_{n+1}][x_{<n+3}]$. By the closure properties of the models, we can assume that $T$ is also in $M_{n-1}[G_n][G_{n+1}][x_{<n+3}]$. As $j(T)\rest\kappa=T$, $T$ has a cofinal branch in model (\ref{M_1}). We will argue that the forcing from $M_{n-1}[G_n][G_{n+1}][x_{<n+3}]$ to the model (\ref{M_1}) cannot add a cofinal branch to $T$ over $M_{n-1}[G_n][G_{n+1}][x_{<n+3}]$. This will contradict the assumption that $T$ ia a $\kappa$-Aronszajn tree in $V_{n-1}[G_n][G_{n+1}][x_{<n+3}]$, and conclude the whole proof.

First we show that there are no cofinal branches in $T$ in the smaller model:
\beq\label{M_3}
M_{n-1}[G_n][G_{n+1}][x_{<n+3}][x^*_{<n+2}][h^0_{n+1}][G_{\S}][h^*_n].
\eeq
Let us work for a while in $M_{n-1}[G_n][G^1_{n+1}\times x_{n+2}]$;  $h^*_n$ is $j(\Q_n)/(G_n*(G^1_{n+1}\times x_{n+2}))$-generic over this model and there is a projection $\pi^*_n:j(\P_n)\times\T^*_n\then j(\Q_n)/(G_n*(G^1_{n+1}\times x_{n+2}))$. Therefore we can find $h^{*0}_n\times h^{*1}_n$ which is $j(\P_n)\times\T^*_n$-generic over $$M_{n-1}[G_n][G_{n+1}][x_{<n+3}][x^*_{<n+2}][G_{\S}][h^0_{n+1}]$$ such that ${\pi_n^*}''(h^{*0}_n\times h^{*1}_n)=h^*_n$.

In order to argue that there are no cofinal branches through $T$ in the model (\ref{M_3}), it is enough to show that there are no such branches in the larger model:
\beq\label{M_2}
M_{n-1}[G_n][G_{n+1}][x_{<n+3}][x^*_{<n+2}][h^{*0}_n][h^0_{n+1}][G_{\S}][h^{*1}_n].\footnote{Note that in contrast to $h^{*1}_n$, we can put $h^{*0}_n$ before $G_\S$ as it is generic for the Cohen forcing $j(\P_n)$ and it already lives in $V_{n-2}$.}
\eeq

We divide the proof of the proposition that $T$ has no cofinal branch in (\ref{M_2}) into two claims: First we use the $\kappa$-square-cc of the Cohen forcings which add the generic $x^*_{<n+2}\x h^{*0}_n\x h^0_{n+1}$ to show that they do not add cofinal branches to $T$, and then we use the closure property of forcings which add $G_{\S}*h^{*1}_n$ to show that they cannot add a cofinal branch to $T$ either.

\begin{Claim}\label{Cl:Cohens}
$j(\E_{n+2})\x j(\P_n)\x j(\P_{n+1})$ is $\kappa$-square-cc in $M_{n-1}[G_n][G_{n+1}][x_{<n+3}]$.
\end{Claim}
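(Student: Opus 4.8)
The plan is to read ``$\kappa$-square-cc'' as ``$P\x P$ is $\kappa$-cc'' (so that Fact \ref{F:Square_cc} applies), to establish this first in a convenient inner model and then transport the chain condition to $B:=M_{n-1}[G_n][G_{n+1}][x_{<n+3}]$. The starting observation is that every factor of $P=j(\E_{n+2})\x j(\P_n)\x j(\P_{n+1})$ is a Cohen forcing $\Add(\kappa_m,\cdot)$ at some $\kappa_m$ with $m\le n+1$, computed in an inner model which, because $\mathrm{crit}(j)=\kappa=\kappa_{n+2}$ and ${}^\lambda M\sub M$, agrees with $V[\R_{m-1}]$ on sets of size $<\lambda$. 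In that home model $\kappa_m$ is still ``untreated'' in the sense of Cummings--Foreman, so the relevant GCH instance $2^{<\kappa_m}=\kappa_m$ holds; by the usual $\Delta$-system argument the Cohen forcing is then $\kappa_m^+$-Knaster, hence $\kappa$-Knaster. Grouping the factors of $P\x P$ by the cardinal $\kappa_m$ and using that a finite product of Cohen forcings at $\kappa_m$ taken in a fixed model is again such a Cohen forcing, each group is $\kappa$-Knaster in its home model. (Note that the crude arithmetic of $B$ itself is useless here: since $2^{\kappa_m}=e(\kappa_m)\ge\kappa_{m+2}$ can exceed $\kappa_{n+1}$, the $\Delta$-system argument for these low factors fails outright in $B$, which is exactly why the inner-model computation is needed.)

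Next I would collect all groups into the single model $M_{n-1}=M[\R_n]$, the largest of the home models. The passage from a home model $M[\R_{m-1}]$ up to $M[\R_n]$ is the iteration $\Q_{m-1}*\dots*\Q_{n-1}$; each $\Q_{m'}$ with $m'\le n-1$ is $\kappa_{m'+2}$-Knaster and $\kappa_{m'+2}\le\kappa_{n+1}<\kappa$, so the whole iteration is $\kappa$-cc by Lemma \ref{L:iterace_property}(iii). By Lemma \ref{L:K-cc}(ii) each group stays $\kappa$-Knaster in $M_{n-1}$, and then Lemma \ref{lm:product}(i) gives that $P$, and hence $P\x P$, is $\kappa$-Knaster, in particular $\kappa$-cc, in $M_{n-1}$.

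It remains to transport $\kappa$-cc-ness of $P\x P$ from $M_{n-1}$ to $B$, and this is where the work lies. The forcing from $M_{n-1}$ to $B$ is $\Q_n*\dot{\Q}_{n+1}*(\dot{\E}_{n+2}\x \dot{x}_{n+2})$, which is itself not $\kappa$-cc, since $\Q_{n+1}$ collapses cardinals above $\kappa$ and $x_{n+2}$ is Cohen forcing at $\kappa$ itself. Because a chain condition passes downward to any inner model containing the poset, it suffices to verify $P\x P$ is $\kappa$-cc in a larger model $B^*\super B$ with $P\x P\in B$. I would take $B^*$ to replace $G_{n+1}$ by a generic for $\P_{n+1}\x\T_{n+1}$, which projects onto $\Q_{n+1}$ via $\pi_{n+1}$; then the only genuinely non-cc pieces of $M_{n-1}\to B^*$ are the $\kappa$-closed forcings $\T_{n+1}$ and $x_{n+2}$, the remaining pieces $\Q_n$, $\P_{n+1}$ and the low Cohen generics $x_0,\dots,x_{n+1}$ being $\kappa$-Knaster, respectively $\kappa$-cc by Corollary \ref{Cor:cc}.

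The main obstacle is the \emph{order} of forcing together with the degradation of closure: preservation through the Knaster pieces wants Lemma \ref{L:K-cc} (so Knaster-ness should survive), whereas the closed pieces only preserve $\kappa$-cc via Easton's Lemma \ref{L:Easton}(ii), and their closure drops to mere $\kappa$-distributivity once any $\kappa$-cc forcing has acted. I would handle this by exploiting mutual genericity to force first with the $\kappa$-closed forcings $\T_{n+1}$ and $x_{n+2}$ over $M_{n-1}$ --- where they are genuinely $\kappa$-closed and $P\x P$ is $\kappa$-Knaster, so Lemma \ref{L:Easton}(ii) keeps $P\x P$ $\kappa$-cc --- and only afterwards with the Knaster pieces, using Lemma \ref{L:K-cc}(i) to keep $P\x P$ $\kappa$-cc throughout; the combined effect of the Mitchell forcing $\Q_{n+1}$ on a Knaster forcing is precisely what Lemma \ref{TPd}(iii) packages, so the cleanest execution pushes the $\kappa$-cc of $R:=P\x P$ through $\Q_n$ and $\Q_{n+1}$ by iterated appeals to Lemma \ref{TPd} and Lemma \ref{L:Easton}, in the style of the proof of Lemma \ref{L:R_n}. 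Verifying that the relevant pieces are mutually generic and retain their closure/Knaster properties over the intermediate models (via Lemma \ref{L:closed-dis} and Fact \ref{F:iterace}) is the technical heart; once $P\x P$ is seen to be $\kappa$-cc in $B^*$, it is $\kappa$-cc in $B$, which is the asserted $\kappa$-square-cc.
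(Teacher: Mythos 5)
Your proposal is correct and is essentially the paper's own argument: the paper reduces the claim by Cohen-forcing isomorphisms (absorbing $j(\P_n)$ and $j(\P_{n+1})$ into $j(\E_{n+2})$ to get $j(\E_{n+1})\x j(\P_{n+1})$, which is isomorphic to its own square, and then absorbing $\E_{n+2}$ so that only the model $M_{n-1}[G_n][G_{n+1}][x_{n+2}]$ needs treatment) and then literally says ``proceed as in the proof of Lemma~\ref{L:R_n}(c)'' --- which is precisely the inner-model Knaster computation followed by pushing the chain condition through $\Q_n*\dot{\Q}_{n+1}$ via the $\P_{n+1}\x\T_{n+1}$ decomposition, Lemma~\ref{TPd} and Easton's lemma that you spell out, your handling of $\E_{n+2}$ via Corollary~\ref{Cor:cc} being an equivalent substitute for the paper's absorption isomorphism. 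Two slips in your write-up are harmless because your closing execution already repairs them: $\T_{n+1}$ and $\P^\E_{n+2}$ exist only in $M_{n-1}[G_n]$, not in $M_{n-1}$ (so $\Q_n$ must be processed first, exactly as in Lemma~\ref{L:R_n}(c)), and ``$\kappa_m^+$-Knaster, hence $\kappa$-Knaster'' is not a valid implication in general --- rather, the $\Delta$-system argument applied at $\kappa$ itself, which is inaccessible in each home model, yields $\kappa$-Knasterness of the factors directly.
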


\begin{proof}
First note that the product $j(\E_{n+2})\x j(\P_n)\x j(\P_{n+1})$ is isomorphic to $j(\E_{n+1})\x j(\P_{n+1})$ as $\P^\E_n\x \P_n$ is isomorphic to $\P^\E_n$, and $\P^\E_{n+1}\x\P_{n+1}$ is isomorphic to $\P_{n+1}$\footnote{Note that $\P_{n+1}$ has length $\kappa_{n+3}$ hence $\P^\E_{n+1}\x\P_{n+1}$ is not isomorphic to $\P^\E_{n+1}$ as this has length less or equal $\kappa$}. Also note that $j(\E_{n+1})\x j(\P_{n+1})$ is isomorphic to its square. Hence to show that $j(\E_{n+1})\x j(\P_{n+1})\x j(\E_{n+1})\x j(\P_{n+1})$ is $\kappa$-cc, it suffices to show that $j(\E_{n+1})\x j(\P_{n+1})$ is $\kappa$-cc.

In $M_{n-1}[G_n][G_{n+1}][x_{n+2}]$, $\E_{n+2}\x j(\E_{n+1})\x j(\P_{n+1})$ is isomorphic to $j(\E_{n+1})\x j(\P_{n+1})$; if we show that $j(\E_{n+1})\x j(\P_{n+1})$ is $\kappa$-cc in this model, we conclude that that $\E_{n+2}\x j(\E_{n+1})\x j(\P_{n+1})$ is $\kappa$-cc, i.e.\ $\E_{n+2}$ forces that $j(\E_{n+1})\x j(\P_{n+1})$ is $\kappa$-cc, which implies $j(\E_{n+1})\x j(\P_{n+1})$ is $\kappa$-cc in $M_{n-1}[G_n][G_{n+1}][x_{<n+3}]$.

To show that $j(\E_{n+1})\x j(\P_{n+1})$ is $\kappa$-cc in $M_{n-1}[G_n][G_{n+1}][x_{n+2}]$, we proceed as in the proof of Lemma \ref{L:R_n}(c).
\end{proof}

Since $j(\E_{n+2})\x j(\P_n)\x j(\P_{n+1})$ is $\kappa$-square-cc in $M_{n-1}[G_n][G_{n+1}][x_{<n+3}]$, there are no cofinal branches through $T$ in 
\beq
M_{n-1}[G_n][G_{n+1}][x_{<n+3}][x^*_{<n+2}][h^{*0}_n][h^0_{n+1}],
\eeq 
by Fact \ref{F:Square_cc}

\begin{Claim}\label{Cl:Closed}
In the model $M_{n-1}[G_n][G_{n+1}][x_{n+2}][y_{n+1}][h^0_{n+1}]$ the following hold:
\bce[(i)]
\item $\S_{n+1}*\T_n^*$ is $\kappa_{n+1}$-closed.
\item $\E_{n+1}\x j(\E_{n+1})\x j(\P_n)$ is $\kappa_{n+1}$-cc.
\ece
\end{Claim}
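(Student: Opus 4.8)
The plan is to verify items (i) and (ii) separately in the model $M_{n-1}[G_n][G_{n+1}][x_{n+2}][y_{n+1}][h^0_{n+1}]$, which I abbreviate $M^\dagger$, and to keep in mind that these are exactly the two hypotheses needed for the subsequent no-branch argument: with $\kappa=\kappa_n$ and $\lambda=\kappa_{n+2}$, item (ii) gives that $P=\E_{n+1}\x j(\E_{n+1})\x j(\P_n)$ is $\kappa_n^+$-cc and item (i) gives that $Q=\S_{n+1}*\T^*_n$ is $\kappa_n^+$-closed, so Fact~\ref{F:ccc_Closed} applies to $P$ and $Q$ (the side condition $2^{\kappa_n}\ge\kappa_{n+2}$ being guaranteed by $e(\kappa_n)\ge\kappa_{n+2}$). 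Thus the two items are designed precisely so that the $\kappa_{n+1}$-closed forcing adds no cofinal branch over the model obtained by the $\kappa_{n+1}$-cc Cohen forcing.

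For (i), I would start from the two models where the factors are naturally closed: $\S_{n+1}$ is $\kappa_{n+1}$-closed over $M_{n-1}[G_n][G_{n+1}]$ by the basic analysis of the quotient $\S$, and $\T^*_n$ is $\kappa_n^+=\kappa_{n+1}$-closed over $M_{n-1}[G_n][G^1_{n+1}\x x_{n+2}]$ by Fact~\ref{F:projekce_quotient}(ii). The point is then that every generic distinguishing $M^\dagger$ from these home models is added by a $\kappa_{n+1}$-distributive forcing: $x_{n+2}$ for $\P^\E_{n+2}$ (which is $\kappa_{n+2}$-closed), $y_{n+1}$ for $j(\P^\E_{n+1})$ and $h^0_{n+1}$ for the Cohen part of $j(\P_{n+1})$ (both $\kappa_{n+1}$-closed), and $G_{n+1}$ for $\Q_{n+1}=\R(\kappa_{n+1},\kappa_{n+3},\dots)$ (which is $\kappa_{n+1}$-distributive by Fact~\ref{F:iterace}(ii)). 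Hence repeated use of Lemma~\ref{L:closed-dis}(ii) keeps both $\S_{n+1}$ and $\T^*_n$ $\kappa_{n+1}$-closed in $M^\dagger$, and $\S_{n+1}$ (being $\kappa_{n+1}$-closed, hence distributive) still forces $\T^*_n$ to be $\kappa_{n+1}$-closed; Lemma~\ref{L:iterace_property}(i) then yields that $\S_{n+1}*\T^*_n$ is $\kappa_{n+1}$-closed. The iteration (as opposed to product) ordering is legitimate because $\T^*_n$ is defined from $G^1_{n+1}$ and $x_{n+2}$, and $G^1_{n+1}$ is recovered from the $\S_{n+1}$-generic via $G^0_{n+1}\x G^1_{n+1}=G_{n+1}*G_\S$.

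For (ii), the product consists entirely of Cohen forcings at cardinals $\le\kappa_n$ (since $\mathrm{crit}(j)=\kappa_{n+2}$, the map $j$ fixes each $\kappa_i$ with $i\le n$). At the top cardinal $\kappa_n$ the three factors $\P^\E_n$, $j(\P^\E_n)$ and $j(\P_n)$ are all $\kappa_{n+1}=\kappa_n^+$-Knaster; re-running the $\Delta$-system argument at $\kappa_{n+1}$ (valid given the cardinal arithmetic of $M^\dagger$) shows the factors at smaller cardinals are $\kappa_{n+1}$-Knaster as well, so by Lemma~\ref{lm:product}(i) the whole product is $\kappa_{n+1}$-Knaster and in particular $\kappa_{n+1}$-cc. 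To carry this out in $M^\dagger$ rather than in the various inner models where the factors live, I would proceed exactly as in the proof of Lemma~\ref{L:R_n}(c) (equivalently as indicated for Claim~\ref{Cl:Cohens}), transferring the chain condition across the intermediate models by invoking Easton's lemma (Lemma~\ref{L:Easton}) against the intervening $\kappa_{n+1}$-closed or $\kappa_{n+1}$-distributive forcings.

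The hard part is bookkeeping rather than any new idea: one must track precisely which generics separate $M^\dagger$ from the natural home model of each forcing and check, step by step, that each intervening extension is either $\kappa_{n+1}$-distributive (so that it preserves the closure needed for (i) via Lemma~\ref{L:closed-dis}) or $\kappa_{n+1}$-cc and commutes suitably with the Cohen factors (so that it preserves the chain condition needed for (ii) via Lemmas~\ref{L:Easton} and~\ref{lm:product}). The genuinely delicate point is that the factors are defined over different inner models --- the $V[\R_{i-1}]$, their $M$-analogues, and their $j$-images --- so the $\Delta$-system and Knaster computations in (ii) must be justified using the cardinal arithmetic that actually holds in $M^\dagger$; this is exactly the reduction already performed in Lemma~\ref{L:R_n}(c), which is why it suffices to defer to that argument.
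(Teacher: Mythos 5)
Your overall architecture for the claim --- verifying (i) by establishing the closure of each factor ($\S_{n+1}$ and $\T^*_n$) in its natural home model, transferring that closure across $\kappa_{n+1}$-distributive extensions via Lemma \ref{L:closed-dis}(ii), and combining via the iteration lemma (Lemma \ref{L:iterace_property}(i)); and verifying (ii) by absorbing the triple product into a single Cohen product and deferring the chain-condition computation to the argument of Lemma \ref{L:R_n}(c) --- is exactly the paper's. Item (ii) in particular matches the paper essentially verbatim: the paper notes that $\E_{n+1}\x j(\E_{n+1})\x j(\P_n)$ is isomorphic to $j(\E_{n+1})$ and then points to the proof of Lemma \ref{L:R_n}(c), which is the same deferral you make (your Knaster-product phrasing is an equivalent packaging).

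However, one step in your list for (i) would fail as stated: you propose to transfer the closure of $\T^*_n$ across the extension adding $G_{n+1}$ by invoking the $\kappa_{n+1}$-distributivity of $\Q_{n+1}$ (Fact \ref{F:iterace}(ii)). That application of Lemma \ref{L:closed-dis}(ii) is illegitimate, because $G_{n+1}$ is not $\Q_{n+1}$-generic over the home model $M_{n-1}[G_n][G^1_{n+1}\x x_{n+2}]$ of $\T^*_n$: the filters $G_{n+1}$ and $G^1_{n+1}$ are entangled through the projection $\pi_{n+1}$ (every $(p,q,f)\in G_{n+1}$ has $(\emptyset,q,f)\in G^1_{n+1}$, so $G_{n+1}$ avoids the dense set of conditions whose term part leaves the filter $G^1_{n+1}$), and distributivity of a forcing says nothing about a non-generic filter. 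The paper avoids this by the rearrangement $M_{n-1}[G_n][G_{n+1}][x_{n+2}][G_\S]=M_{n-1}[G_n][G^1_{n+1}\x x_{n+2}\x G^0_{n+1}]$: over the home model of $\T^*_n$ the only new generic (before the $y_{n+1}\x h^0_{n+1}$ stage) is $G^0_{n+1}$ for the Cohen forcing $\P_{n+1}$, whose $\kappa_{n+1}$-distributivity there is justified by Fact \ref{F:iterace}(v) together with Lemma \ref{L:closed-dis}(i) (preservation under the $\kappa_{n+1}$-closed $\T_{n+1}\x\P^\E_{n+2}$); the final step then uses that the forcing adding $y_{n+1}\x h^0_{n+1}$ is isomorphic to $j(\P_{n+1})$ and is $\kappa_{n+1}$-distributive in $M_{n-1}[G_n][G_{n+1}][x_{n+2}]$ by the argument of Lemma \ref{L:R_n}(b). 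Since you yourself observe that $G^1_{n+1}$ is recovered via $G^0_{n+1}\x G^1_{n+1}=G_{n+1}*G_\S$, this is a local, fixable slip --- replace the $\Q_{n+1}$ step by the $\P_{n+1}$ step after the rearrangement --- but as written the preservation chain for $\T^*_n$ has a hole.
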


\begin{proof}
(i) The forcing $\S_{n+1}$ lives in $M_{n-1}[G_n][G_{n+1}]$ and it is $\kappa_{n+1}$-closed there, but it is also $\kappa_{n+1}$-closed in $M_{n-1}[G_n][G_{n+1}][x_{n+2}]$ by Lemma \ref{L:closed-dis}(ii) as $\P^\E_{n+2}$ is $\kappa_{n+1}$-closed in $M_{n-1}[G_n][G_{n+1}]$ by Lemma \ref{L:R_n}(a).

Now, the term forcing $\T^*_n$ lives in $M_{n-1}[G_n][G^1_{n+1}\x x_{n+2}]$ and it is $\kappa_{n+1}$-closed there. The model $M_{n-1}[G_n][G_{n+1}][x_{n+2}][G_\S]$ is equal to  $M_{n-1}[G_n][G^1_{n+1}\x x_{n+2}\x G^0_{n+1}]$. Therefore to show that $\T^*_n$ is $\kappa_{n+1}$-closed here it is enough to show that it stay closed after forcing with $\P_{n+1}$, but this holds by Lemma \ref{L:closed-dis}(ii) as $\P_{n+1}$ is $\kappa_{n+1}$-distributive in  $M_{n-1}[G_n][G^1_{n+1}\x x_{n+2}]$.\footnote{ $\P_{n+1}$ is $\kappa_{n+1}$-distributive in $M_{n-1}[G_n]$ by Fact \ref{F:iterace}(v) and it stay $\kappa_{n+1}$-distributive by Lemma \ref{L:closed-dis}(i) after forcing with $\T_{n+1}\x \P^\E_{n+2}$ as this forcing is $\kappa_{n+1}$-closed in  $M_{n-1}[G_n]$.}

By the previous two paragraphs, $\S_{n+1}*\T^*_n$ is $\kappa_{n+1}$-closed in $M_{n-1}[G_n][G_{n+1}][x_{n+2}]$. Now, the product of Cohen forcings which add the generic filter $y_{n+1}\x h^0_{n+1}$ is isomorphic to $j(\P_{n+1})$. This forcing $j(\P_{n+1})$ is $\kappa_{n+1}$-distributive in $M_{n-1}[G_n][G_{n+1}][x_{n+2}]$ by Lemma \ref{L:R_n}(b); therefore the forcing $\S_{n+1}*\T^*_n$ remains $\kappa_{n+1}$-closed in the model  $M_{n-1}[G_n][G_{n+1}][x_{n+2}][y_{n+1}][h^0_{n+1}]$ by Lemma \ref{L:closed-dis}(ii) as required.

(ii) As before, the product $\E_{n+1}\x j(\E_{n+1})\x j(\P_n)$ is isomorphic to $j(\E_{n+1})$ and the proof that this forcing is $\kappa_{n+1}$-cc is as in the proof of Lemma \ref{L:R_n}(c).
\end{proof}

Now we can apply Fact \ref{F:ccc_Closed} to $\E_{n+1}\x j(\E_{n+1})\x j(\P_n)$ as $P$ and $\S_{n+1}*\T_n^*$ as $Q$ over the model $M_{n-1}[G_n][G_{n+1}][x_{n+2}][y_{n+1}][h^0_{n+1}]$. Therefore there are no cofinal branches in $T$ in the model (\ref{M_2}) and hence neither in the model (\ref{M_3}).

To finish the proof of the tree property at $\kappa$ it is enough to show that $j(\D_{n+1})\x j(\P_{n+2})$ cannot add a cofinal branch to $T$ over the model (\ref{M_3}).

\begin{Claim}\label{Cl:zbytek}
In the model $M_{n-1}[G_n][G_{n+1}][x_{n+2}][y_{n+1}][h^0_{n+1}][G_{\S}][h^*_n]$ the following hold:
\bce[(i)]
\item $j(\D_{n+1})\x j(\P_{n+2})$ is $\kappa_{n+1}$-closed.
\item $\E_{n+1}\x j(\E_{n+1})$ is $\kappa_{n+1}$-cc.
\ece
\end{Claim}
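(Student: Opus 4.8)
The plan is to prove (i) and (ii) by the model-juggling technique of Claim \ref{Cl:Closed}: for each factor I would locate a submodel in which it is visibly $\kappa_{n+1}$-closed (resp.\ $\kappa_{n+1}$-cc) and then check that the generics still needed to reach
\[
N := M_{n-1}[G_n][G_{n+1}][x_{n+2}][y_{n+1}][h^0_{n+1}][G_{\S}][h^*_n]
\]
preserve the relevant property, the two engines being Lemma \ref{L:closed-dis}(ii) for preserving closure under $\kappa_{n+1}$-distributive forcing, and Lemma \ref{L:Easton}(ii) together with Lemma \ref{L:iterace_property}(iii) for controlling the chain condition. First I would record the decomposition $N = N_0[y_{n+1}]$, where $N_0$ is $N$ with $y_{n+1}$ deleted; using $G^0_{n+1}\x G^1_{n+1} = G_{n+1}*G_\S$ and $M[j(\R_{n+1})] = M_{n-1}[G_n][G^1_{n+1}\x x_{n+2}][h^*_n]$, one checks that $N_0 = M[j(\R_{n+1})][H^0_{n+1}]$.

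For (i) I would first argue that both factors are $\kappa_{n+1}$-closed in $N_0$. By elementarity $j(\D_{n+1})$ is the $j(\P_{n+1})$-quotient of $j(\Q_{n+1})$ and is forced to be $\kappa_{n+1}$-closed by $j(\P_{n+1})$ over $M[j(\R_{n+1})]$ (Fact \ref{F:iterace}(vi) and $j(\kappa_{n+1})=\kappa_{n+1}$); as $H^0_{n+1}$ is $j(\P_{n+1})$-generic, this gives $\kappa_{n+1}$-closure of $j(\D_{n+1})$ in $N_0$. The point to stress is that $h^*_n$, although it carries the Cohen forcing $j(\P_n)$ at $\kappa_n$ (which is \emph{not} $\kappa_{n+1}$-distributive), is already absorbed into the ground model $M[j(\R_{n+1})]$ and so does no harm. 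The factor $j(\P_{n+2})$ is a Cohen forcing at $j(\kappa_{n+2})>\lambda$, hence $j(\kappa_{n+2})$-closed in $M[j(\R_{n+1})]$, and it stays $\kappa_{n+1}$-closed after the $\kappa_{n+1}$-distributive $H^0_{n+1}$ by Lemma \ref{L:R_n}(b) (transported through $j$) and Lemma \ref{L:closed-dis}(ii). The product is then $\kappa_{n+1}$-closed in $N_0$, and finally, since $N=N_0[y_{n+1}]$ with $y_{n+1}$ generic for the $\kappa_{n+1}$-distributive $j(\P^\E_{n+1})$, one more application of Lemma \ref{L:closed-dis}(ii) yields $\kappa_{n+1}$-closure in $N$.

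For (ii) I would first record the Cohen absorption $\E_{n+1}\x j(\E_{n+1}) \iso j(\E_{n+1})$: each coordinate of $\E_{n+1}$ is a Cohen forcing at $\kappa_i$ of length at most $\lambda<j(\kappa)$, the models $V$ and $M$ agree below $\kappa$ (as ${}^\lambda M\sub M$), and $j(e(\kappa_i))\ge e(\kappa_i)$, so adjoining $\E_{n+1}$ merely lengthens the Cohen blocks of $j(\E_{n+1})$. Hence it suffices to see that $\E_{n+1}\x j(\E_{n+1})$ is $\kappa_{n+1}$-cc in $N$, and for this I would start from Claim \ref{Cl:Closed}(ii), which already gives $\E_{n+1}\x j(\E_{n+1})\x j(\P_n)$ $\kappa_{n+1}$-cc in $M_{n-1}[G_n][G_{n+1}][x_{n+2}][y_{n+1}][h^0_{n+1}]$. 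Adjoining $G_\S$ ($\S_{n+1}$, $\kappa_{n+1}$-closed by Claim \ref{Cl:Closed}(i)) and the closed term part $\T^*_n$ preserves this chain condition by Lemma \ref{L:Easton}(ii); then Lemma \ref{L:iterace_property}(iii), applied to $j(\P_n)*(\E_{n+1}\x j(\E_{n+1}))$, shows that forcing with $j(\P_n)$ keeps $\E_{n+1}\x j(\E_{n+1})$ $\kappa_{n+1}$-cc. Since $N$ is an inner model of the resulting $j(\P_n)\x\T^*_n$-extension (through the projection $\pi^*_n$) and the chain condition passes to inner models, $\E_{n+1}\x j(\E_{n+1})$ is $\kappa_{n+1}$-cc in $N$.

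The step I expect to be the main obstacle is the bookkeeping rather than any single estimate: getting the decomposition $N=N_0[y_{n+1}]$ exactly right, and keeping straight that $h^*_n$ lies inside $M[j(\R_{n+1})]$, so that its non-distributive Cohen part $j(\P_n)$ endangers neither the closure in (i) (there it belongs to the ground over which $j(\D_{n+1})$ is closed) nor the chain condition in (ii) (there it is absorbed using the chain condition of the full product via Lemma \ref{L:iterace_property}(iii)). The one genuinely new verification is that $j(\P^\E_{n+1})$ stays $\kappa_{n+1}$-distributive over the enlarged model $N_0$, which I would obtain by transporting Lemma \ref{L:R_n}(b) through $j$ and absorbing the extra generics of $N_0$ by Easton's lemma (Lemma \ref{L:Easton}(i)).
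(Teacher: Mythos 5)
Your proposal is correct and takes essentially the same route as the paper: the same identification $N_0=M[j(\R_{n+1})][H^0_{n+1}]$ (the paper's model (\ref{M_4})), closure of $j(\D_{n+1})$ in $N_0$ via the $j$-image of Fact~\ref{F:iterace}(vi), closure of $j(\P_{n+2})$ preserved through the $\kappa_{n+1}$-distributive $j(\P_{n+1})$ by Lemma~\ref{L:closed-dis}(ii), and your ``transport of Lemma~\ref{L:R_n}(b) through $j$'' for the distributivity of $j(\P^\E_{n+1})$ over $N_0$, which unwinds to exactly the paper's application of Lemma~\ref{TPd}(iv) with $Q=j(\P_{n+1})$ and the Mitchell forcing $j(\Q_n)$. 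Your only deviation is cosmetic, in (ii): you reuse Claim~\ref{Cl:Closed} together with Lemma~\ref{L:Easton}(ii) and downward absoluteness of the chain condition to the inner model $N$ of the $j(\P_n)\x\T^*_n$-extension, whereas the paper re-runs the computation of Lemma~\ref{L:R_n}(c) in the rearranged model $M_{n-1}[G_n][h_n][G^0_{n+1}][h^0_{n+1}][y_{n+1}]$ --- the same argument in different packaging, since Claim~\ref{Cl:Closed}(ii) is itself proved by that computation.
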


\begin{proof}
(i) First, the forcing $j(\D_{n+1})$ lives in $M_{n-1}[G_n][G_{n+1}][x_{n+2}][h^0_{n+1}][G_{\S}][h^*_n]$ and it is $\kappa_{n+1}$-closed there. 

Second, $j(\P_{n+2})$ lives in $M_{n-1}[G_n][G^1_{n+1}\x x_{n+2}][h^*_n]$ and it is $\kappa_{n+1}$-closed there. To get from model $M_{n-1}[G_n][G^1_{n+1}\x x_{n+2}][h^*_n]$ to $M_{n-1}[G_n][G^0_{n+1}\x G^1_{n+1}\x x_{n+2}][h^0_{n+1}][h^*_n]=M_{n-1}[G_n][G_{n+1}][x_{n+2}][G_{\S}][h^*_n][h^0_{n+1}]$ it suffices to force with $j(\P_{n+1})$, which adds a generic filter for $G^0_{n+1}\x h^0_{n+1}$. This forcing lives in $M_{n-1}$ and it is $\kappa_{n+1}$-distributive in $M_{n-1}[G_n][h_n]=M_{n-1}[G_n][G^1_{n+1}\x x_{n+2}][h^*_n]$ by Fact \ref{F:iterace}(v) or by Lemma \ref{TPd}. Therefore $j(\P_{n+2})$ remains $\kappa_{n+1}$-closed in $M_{n-1}[G_n][G_{n+1}][x_{n+2}][G_{\S}][h^*_n][h^0_{n+1}]$ by Lemma \ref{L:closed-dis}(ii).

As both forcings are $\kappa_{n+1}$-closed in 
\beq\label{M_4}
M_{n-1}[G_n][G_{n+1}][x_{n+2}][h^0_{n+1}][G_{\S}][h^*_n],
\eeq
their product $j(\D_{n+1})\x j(\P_{n+2})$ is $\kappa_{n+1}$-closed as well. The diference between the model (\ref{M_4}) and the model
\beq\label{M_5}
M_{n-1}[G_n][G_{n+1}][x_{n+2}][y_{n+1}][h^0_{n+1}][G_{\S}][h^*_n]
\eeq
-- where we want to show that $j(\D_{n+1})\x j(\P_{n+2})$ is $\kappa_{n+1}$-closed -- is just the forcing $j(\P^\E_{n+1})$ which adds the generic filter $y_{n+1}$. Therefore to finish the proof of the claim it suffices to show that $j(\P_{n+1})$ is $\kappa_{n+1}$-distributive in model (\ref{M_4}). The model (\ref{M_4}) is actually equal to
\beq
M_{n-1}[G_n][h_n][G^0_{n+1}][h^0_{n+1}].
\eeq

By Lemma \ref{TPd}(iv), $j(\P_{n+1})\x j(\P^\E_{n+1})$ is $\kappa_{n+1}$-distributive in $M_{n-1}[G_n][h_n][G^0_{n+1}]$. Therefore $j(\P_{n+1})$ forces that $j(\P^\E_{n+1})$ is $\kappa_{n+1}$-distributive and so $j(\P^\E_{n+1})$ is $\kappa_{n+1}$-distributive in the model (\ref{M_4}). Now we can apply Lemma \ref{L:closed-dis}(ii) to $j(\P^\E_{n+1})$ and $j(\D_{n+1})\x j(\P_{n+2})$ over the model (\ref{M_4}) and conclude that $j(\D_{n+1})\x j(\P_{n+2})$ is $\kappa_{n+1}$-closed in (\ref{M_5}).

(ii) Recall that the model $M_{n-1}[G_n][G_{n+1}][x_{n+2}][y_{n+1}][h^0_{n+1}][G_{\S}][h^*_n]$ is equal to 
\beq
M_{n-1}[G_n][h_n][G^0_{n+1}][h^0_{n+1}][y_{n+1}].
\eeq

The proof that $\E_{n+1}\x j(\E_{n+1})$  -- which is isomorphic to $j(\E_{n+1})$ -- is $\kappa_{n+1}$-cc in this model proceeds exactly as in the proof of Lemma \ref{L:R_n}(c).
\end{proof}

By the previous claim, we can apply Fact \ref{F:ccc_Closed} to $\E_{n+1}\x j(\E_{n+1})$ as $P$ and $j(\D_{n+1})\x j(\P_{n+2})$ as $Q$ over the model $M_{n-1}[G_n][G_{n+1}][x_{n+2}][y_{n+1}][h^0_{n+1}][G_{\S}][h^*_n]$ and conclude that there are no cofinal branches in $T$ in the model (\ref{M_1}). This is a contradiction which finishes the proof of Theorem \ref{th:tp}.

\section{Open questions}

For the first question below, let us assume $e: \omega \to \omega$ satisfies $n < m \then e(n) \le e(m)$ and $e(n)>n+1$ for all $n,m < \omega$.

\begin{que}
\emph{Is it possible to have the tree property at every $\aleph_n$, $1<n<\omega$, with $2^{\aleph_n} = \aleph_{e(n)}$, $n<\omega$, and $2^{\aleph_\omega} = \aleph_{\omega+m}$ for a prescribed $1 < m < \omega$?  (Note that in our model we have $2^{\aleph_\omega} = \aleph_{\omega+1}$.)}
\end{que}

A partial answer to this question was given by Honzik and Friedman in \cite{FH:tree1}, who showed that that $2^{\aleph_\omega}=\aleph_{\omega+2}$ is consistent with the tree property at every even cardinal below $\aleph_\omega$. However, this method does not seem to be appropriate for manipulating the continuum function as they used an iteration of the Sacks forcing, instead of the Mitchell forcing which allows greater flexibility. Unger \cite{S:along} extended this result using the Cummings-Foreman method to show that $2^{\aleph_\omega}=\aleph_{\omega+2}$ is consistent with the tree property at every cardinal $\aleph_n$ below $\aleph_\omega$, for $n>1$, with $2^{\aleph_n} = \aleph_{n+2}$ for each $n< \omega$.

\begin{que}
\emph{In our final model, can we in addition have the tree property at $\aleph_{\omega+2}$?}
\end{que}

Note that this question is still open even with the trivial continuum function; i.e. with $2^{\aleph_n}=\aleph_{n+2}$ for $n <\omega$.

\begin{que}
\emph{Can we control generalized cardinal invariants together with the tree property? For instance, is it possible to combine the results of Cummings and Shelah in \cite{CumShe:gen} for $\mathfrak{d}_\kappa$ and $\mathfrak{b}_\kappa$ with the tree property at relevant cardinals? }
\end{que}

\bigskip

\no {\bf Acknowledgements.} The author was supported by FWF/GA{\v C}R grant \emph{Compactness principles and combinatorics} (19-29633L).

\end{document}